\documentclass[12pt]{amsart}
\textwidth =160mm \textheight =230mm
\oddsidemargin -0.3mm \evensidemargin -0.3mm
\headheight=13pt \setlength{\topmargin}{-0.5cm}
\usepackage{amsmath,amsthm,mathrsfs,amsfonts,amssymb,color}

\newtheorem{thm}{Theorem}[section]
\newtheorem{lem}[thm]{Lemma}

\newtheorem{pro}[thm]{Proposition}
\newtheorem*{defn}{Definition}
\newtheorem{rem}{\it Remark}[section]

\parskip 0.2 cm

\numberwithin{equation}{section}
\allowdisplaybreaks
\begin{document}

\title[]
{Boundedness of Monge-Amp\`ere singular integral operators on Besov spaces}

\author[]{Yongshen Han, Ming-Yi Lee and Chin-Cheng Lin}

\thanks{The second and third authors are supported by Ministry of Science and Technology, R.O.C. under
Grant \#MOST 106-2115-M-008-003-MY2 and Grant \#MOST 106-2115-M-008-004-MY3, respectively,
as well as supported by National Center for Theoretical Sciences of Taiwan.}

\subjclass[2010]{42B20, 42B35}
\keywords{Besov spaces, Monge-Amp\`ere equation, singular integral operators}

\begin{abstract}
Let $\phi: \Bbb R^n \mapsto \Bbb R$ be a strictly convex and smooth function, and
$\mu= \text{det}\,D^2 \phi$ be the Monge-Amp\`ere measure generated by $\phi.$
For $x\in \Bbb R^n$ and $t>0$, let $S(x,t):=\{y\in \Bbb R^n: \phi(y)<\phi(x)+\nabla \phi(x)\cdot(y-x)+t\}$ denote the section.
If $\mu$ satisfies the doubling property, Caffarelli and Guti\'errez (Trans. AMS 348:1075--1092, 1996) provided a variant of the Calder\'on-Zygmund decomposition and a John-Nirenberg-type inequality associated with sections. Under a stronger uniform continuity condition on $\mu$, they also (Amer. J. Math. 119:423--465, 1997) proved an invariant Harnack's inequality for nonnegative solutions of the Monge-Amp\`ere equations with respect to sections. The purpose of this paper is to establish a theory of Besov spaces associated with sections under only the doubling condition on $\mu$ and prove that Monge-Amp\`ere singular integral operators are bounded on these spaces.
\end{abstract}

\maketitle
\section {Introduction}\label{sec-Intro}

Let $\phi: \Bbb R^n \mapsto \Bbb R$ be a strictly convex and smooth function and
consider the Monge-Amp\`ere measure $\mu$ generated by $\phi$
$$\mu:= \text{det}\,D^2 \phi,$$
where $D^2 \phi$ denotes the Hessian matrix of $\phi$. For a given function $u$,
$$\text{det}\,D^2 (\phi+tu)=\text{det}\,D^2 \phi+t\,\text{trace}(\Phi\,D^2 u)+\ldots +t^n\text{det}\,D^2 u,$$
where $\Phi=(\Phi)_{ij}$ is the matrix of cofactors of $D^2 \phi$.
The linearization of the Monge-Amp\'ere equation is denoted by
$$L_\phi u = \text{trace}(\Phi\,D^2 u).$$
To study the properties of the solutions for the equation $L_\phi u =0,$ Caffarelli and Guti\'errez \cite{CG1} introduced a family of {\it sections} as follows.
Let $\rho(x,y)=\phi(y)-\phi(x)-\nabla \phi(x)\cdot(y-x)$.
Given $x\in \Bbb R^n$ and $t>0$, the section is defined by
$$S(x,t)=S_{\phi}(x,t)=\{y\in \Bbb R^n: \rho(x,y)<t\}.$$
These sets are convex and play crucial role in the study of Monge-Amp\`ere equation and the linearized
Monge-Amp\`ere equation (see \cite{C1, C2, CG1, CG2}). Indeed, if the Monge-Amp\`ere measure $\mu$ satisfies the geometric conditions, namely doubling and a uniform continuity conditions,
Caffarelli and Guti\'errez \cite{CG1, CG2} proved a variant of the Calder\'on-Zygmund decomposition and a John-Nirenberg-type inequality associated with sections and an invariant Harnack's inequality with respect to sections. To be more precise, it was assumed in \cite{CG1} that the Monge-Amp\`ere measure $\mu$ satisfies the following property: there exist constants $C>0$ and $0<\alpha<1$ such that
\begin{equation*}
\mu(S(x,t))\le C\mu(\alpha S(x,t))\qquad\text{for all}\ S(x,t),
\end{equation*}
where $\alpha S(x,t)$ denotes the $\alpha$-dilation of the section $S(x,t)$ with respect to its center of mass.
It was proved in \cite{C1} that sections satisfying this hypothesis on $\mu$ imply that the graph of $\phi$ does not contain segments of lines
and the sections $S(x,t)$ are of a size that can be controlled by Euclidean balls when these sections are rescaled by using appropriate affine transformations.
Under these conditions, Caffarelli and Guti\'errez \cite{CG1} proved a variant of the Calder\'on-Zygmund decomposition and a John-Nirenberg-type inequality associated with sections.
However, to obtain an invariant Harnack's inequality on the sections, it requires a stronger uniform continuity condition on $\mu$,
namely, for any given $\delta_1\in  (0, 1)$, there exists $\delta_2\in  (0, 1)$ such that, for all sections $S$ and all measurable subset $E\subset S,$
if $|E|<\delta_2 |S|$, then $\mu(E)<\delta_1 \mu(S).$ Under this uniform continuity condition on $\mu,$
Caffarelli and Guti\'errez \cite{CG2} showed an invariant Harnack's inequality on sections as follows.

\begin{thm}\label{Har}
There exist constants $\beta>1$ and $0<\tau< \frac 13$ depending only on the structure such that if $u$ is any nonnegative solution of $L_\phi u=0$ in the section $S(z,t)$, then
$$\sup_{S(z, \tau t)}u\leq \beta \inf_{S(z, \tau t)}u.$$
\end{thm}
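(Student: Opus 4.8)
The plan is to adapt the De Giorgi--Nash--Moser iteration to the affine-invariant geometry of sections, combining a weak Harnack inequality for nonnegative supersolutions of $L_\phi w=0$ with a local maximum principle for subsolutions. All estimates are carried out relative to sections instead of Euclidean balls, exploiting the doubling property of $\mu$ and the \emph{engulfing property} (there is $\theta>1$, depending only on the structure, such that $y\in S(x,t)$ forces $S(x,t)\subset S(y,\theta t)$ and $S(y,t)\subset S(x,\theta t)$), together with the fact, recalled in the Introduction, that under the doubling hypothesis every section can be mapped by an affine transformation to a convex body squeezed between two concentric balls of comparable radii, with all structure constants preserved. Since $L_\phi$ is affine invariant, it therefore suffices to prove the estimate for such a normalized configuration and then undo the normalization; and because the sections $S(z,s)$ are sublevel sets of $\rho(z,\cdot)$, hence nested increasing in $s$, the final combination of the two one-sided estimates requires no engulfing, only monotonicity.

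The analytic engine is an Aleksandrov--Bakelman--Pucci-type estimate adapted to $L_\phi$: if $L_\phi w\le f$ on a normalized section $S$ with $w\ge0$ on $\partial S$, then
\[
\sup_{S}w^{-}\ \le\ C\Bigl(\int_{\{w=\Gamma_w\}}\frac{|f|^n}{\mu^{n-1}}\Bigr)^{1/n},
\]
where $\Gamma_w$ is the convex envelope of $-w^{-}$ (extended by zero). The mechanism is that the coefficient matrix of $L_\phi$ is the cofactor matrix $\Phi$ of $D^2\phi$, and $\det\Phi=(\det D^2\phi)^{n-1}=\mu^{n-1}$, so the Monge--Amp\`ere measure of $\Gamma_w$ is pointwise controlled in terms of $f$ and $\mu$. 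From this I would extract the \emph{critical density lemma}: there are constants $\varepsilon,c\in(0,1)$ and $M>1$, depending only on the structure, so that if $w\ge0$ is a supersolution of $L_\phi w=0$ in $S(x_0,t)$ with $\inf_{S(x_0,ct)}w\le1$, then
\[
\mu\bigl(\{x\in S(x_0,ct):\,w(x)>M^{\ell}\}\bigr)\ \le\ (1-\varepsilon)^{\ell}\,\mu\bigl(S(x_0,ct)\bigr)\qquad(\ell=1,2,\dots).
\]
The base case $\ell=1$ is the crux: one performs a Calder\'on--Zygmund-type stopping-time decomposition of the section into subsections on which the ABP estimate forces $w$ below a fixed threshold on a definite fraction of the measure, and it is precisely here that the stronger uniform continuity hypothesis on $\mu$ becomes indispensable, since it is what converts the Lebesgue-measure bounds output by the stopping-time construction into $\mu$-measure bounds. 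The inductive step then follows by reapplying the base case on the subsections produced by the decomposition, with the engulfing and doubling properties keeping the constants uniform along the iteration.

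Granting the critical density lemma, summing the geometric decay over $\ell$ yields the weak Harnack inequality: for a small exponent $p_0>0$ and a fixed dilation $\tau_1\in(0,\tfrac13)$,
\[
\Bigl(\frac{1}{\mu(S(z,\tau_1 t))}\int_{S(z,\tau_1 t)}w^{p_0}\,d\mu\Bigr)^{1/p_0}\ \le\ C\,\inf_{S(z,\tau_1 t)}w
\]
for every nonnegative supersolution $w$ of $L_\phi w=0$ in $S(z,t)$. Dually, iterating the ABP estimate for subsolutions over a shrinking family of concentric sections gives the local maximum principle
\[
\sup_{S(z,\tau_2 t)}u\ \le\ C\Bigl(\frac{1}{\mu(S(z,\tau_1 t))}\int_{S(z,\tau_1 t)}u_{+}^{p_0}\,d\mu\Bigr)^{1/p_0}
\]
for $0<\tau_2<\tau_1$. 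For a nonnegative solution $u$ one then chains these, using $S(z,\tau_2 t)\subset S(z,\tau_1 t)$ and (if the exponents differ) a Bombieri--Giusti-type interpolation, to obtain $\sup_{S(z,\tau_2 t)}u\le\beta\inf_{S(z,\tau_2 t)}u$ with $\beta>1$ and $\tau:=\tau_2<\tfrac13$ depending only on the structure. I expect the main obstacle to be the critical density lemma, and within it the quantitative control of $\mu$ on the sets generated by the covering argument; it is exactly this step that cannot be carried out under doubling alone and forces the uniform continuity condition on $\mu$.
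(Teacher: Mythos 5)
First, a point of context: the paper does not prove this statement at all --- Theorem \ref{Har} is quoted verbatim from Caffarelli and Guti\'errez \cite{CG2} as background, and the present paper's contributions begin afterwards. So the only meaningful comparison is with the proof in \cite{CG2}, and at the level of architecture your outline does reproduce it: normalization of sections by affine maps, an ABP-type estimate exploiting $\det\Phi=(\det D^2\phi)^{n-1}=\mu^{n-1}$, a critical density lemma proved with a Calder\'on--Zygmund-type decomposition into sections (this is where the uniform continuity of $\mu$, not just doubling, enters --- you identify this correctly), geometric decay of the distribution sets, then weak Harnack for supersolutions plus a local maximum principle for subsolutions, chained on nested sections. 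That is the right roadmap.

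However, as a proof the submission has a genuine gap: everything that constitutes the mathematical content of the theorem is asserted rather than established. The critical density lemma is introduced with ``from this I would extract'' and then used under ``granting the critical density lemma''; its base case is described only as ``the ABP estimate forces $w$ below a fixed threshold on a definite fraction of the measure,'' with no barrier construction, no statement of which comparison function is used on a normalized section, and no argument for how the stopping-time decomposition in the section geometry (the CG1 covering/decomposition lemma) produces the claimed $\mu$-measure decay. Likewise the ABP-type estimate for $L_\phi$ is itself nontrivial --- $\Phi$ is not uniformly elliptic, and the estimate you wrote (including the measure against which $|f|^n/\mu^{n-1}$ is integrated and the passage from Lebesgue to $\mu$-measure bounds) is exactly the part of \cite{CG1,CG2} that requires the Monge--Amp\`ere structure and the uniform continuity hypothesis; it cannot simply be invoked as a known black box inside a proof of this theorem. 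Finally, the quantitative claims ($\tau<\tfrac13$, $\beta$ depending only on the structure) come out of carrying the chaining argument through the section geometry with explicit engulfing constants, which your sketch does not do. In short, you have correctly reconstructed the skeleton of the Caffarelli--Guti\'errez argument, but the crux --- the ABP estimate for the degenerate operator and the critical density lemma built on it --- is missing, so the proposal is a plan rather than a proof.
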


As pointed in \cite{CG1}, sections satisfy the following conditions:
\begin{enumerate}
\item[({\bf A})] There exist positive constants $K_1, K_2,
      K_3$ and $\epsilon_1, \epsilon_2$ such that given two sections
      $S(x_0,t_0)$, $S(x,t)$ with $t\le t_0$ satisfying
$$S(x_0,t_0)\cap S(x,t)\neq \varnothing,$$
      and an affine transformation $T$ that ``normalizes" $S(x_0,t_0)$; that is,
$$B(0,1/n)\subset T(S(x_0,t_0))\subset B(0,1),$$
      there exists $z\in B(0, K_3)$ depending on $S(x_0,t_0)$ and $S(x,t)$, which satisfies
$$B\big(z,K_2(t/t_0)^{\epsilon_2}\big)\subset T(S(x,t))\subset
      B\big(z,K_1(t/t_0)^{\epsilon_1}\big),$$
      and
$$T(x)\in B\big(z, (1/2)K_2 (t/t_0)^{\epsilon_2}\big).$$
     Here and below $B(x,t)$ denotes the Euclidean ball centered at
     $x$ with radius $t$.
\item[({\bf B})] There exists a constant $\nu>0$ such that given a section
     $S(x,t)$ and $y\notin S(x,t)$, if $T$ is an affine transformation that
     normalizes $S(x,t)$, then, for any $0<\epsilon<1$,
     $$B(T(y),\epsilon^\nu)\cap T(S(x,(1-\epsilon)t))=\varnothing.$$
\item[({\bf C})] $\bigcap_{t>0}S(x,t)=\{x\}$ and $\bigcup_{t>0}S(x,t)=\Bbb R^n.$
\end{enumerate}
Based on the above properties on sections, Caffarelli and Guti\'errez \cite{CG3} introduced the Monge-Amp\`ere singular integral operators as follows.
Suppose that $0<\gamma\le 1$ and $c_1, c_2>0$.
Let $\{k_i(x,y)\}_{i\in \Bbb Z}$ be a sequence of kernels satisfying
the following conditions:
\begin{enumerate}
\item[({\bf D}1)]  {\rm supp}\,$k_i(\cdot,y)\subset S(y,2^i)\ $ for all $y\in \Bbb R^n$;
\item[({\bf D}2)]  {\rm supp}\,$k_i(x,\cdot)\subset S(x,2^i)\ $ for all $x\in \Bbb R^n$;
\item[({\bf D}3)]  $\displaystyle\int_{\Bbb R^n}k_i(x,y)d\mu(y)=
\int_{\Bbb R^n}k_i(x,y)d\mu(x)=0\ $ for all $x, y\in \Bbb R^n$;
\item[({\bf D}4)]  $\displaystyle\sup_i\int_{\Bbb R^n}|k_i(x,y)|d\mu(y)\le c_1\ $
for all $x\in \Bbb R^n$;
\item[({\bf D}5)]  $\displaystyle\sup_i\int_{\Bbb R^n}|k_i(x,y)|d\mu(x)\le c_1\ $
for all $y\in \Bbb R^n$;
\item[({\bf D}6)]  If $T$ is an affine transformation that normalizes
the section $S(y,2^i)$, then
$$|k_i(u,y)-k_i(v,y)|\le \frac {c_2}{\mu(S(y,2^i))}|T(u)-T(v)|^\gamma;$$
\item[({\bf D}7)]  If $T$ is an affine transformation that normalizes
the section $S(x,2^i)$, then
$$|k_i(x,u)-k_i(x,v)|\le \frac {c_2}{\mu(S(x,2^i))}|T(u)-T(v)|^\gamma.$$
\end{enumerate}
Denote $K(x,y)=\sum_{i\in \Bbb Z} k_i(x,y)$.
The {\it Monge-Amp\`ere singular integral operator} $H$ is defined by
$$H(f)(x)=\int_{\Bbb R^n} K(x,y)f(y)d\mu(y).$$
Caffarelli and Guti\'errez \cite{CG3} proved that $H$
is bounded on $L^2(\Bbb R^n, d\mu)$. Subsequently, Incognito \cite{In} established the
$L^p(\Bbb R^n, d\mu)$, $1<p<\infty$, and weak type (1,1) estimates of $H$.
It was also showed that $H$ is bounded from $H^1_{\mathcal F}(\Bbb R^n)$ to $L^1(\Bbb R^n, d\mu)$ and is bounded on $H^1_{\mathcal F}(\Bbb R^n)$ in \cite{DL} and \cite{Le}, respectively. Recently, Lin \cite{Li} proved the boundedness of $H$ acting on $H^p_{\mathcal F}(\Bbb R^n)$, $1/2 <p \le 1$, and their dual spaces which can be realized as Carleson measure spaces, Campanato spaces, and Lipschitz spaces.

The purpose of this paper is to establish a theory of Besov spaces associated with sections under only the doubling condition on $\mu$ and prove that Monge-Amp\`ere singular integral operators are bounded on these spaces.

It is known that conditions ({\bf A}) and ({\bf B}) imply the following {\it engulfing property}: there exists a constant $\theta\ge1$,
depending only on $\nu, K_1$, and $\epsilon_1$, such that
if $x\in S(y,t)$ then $S(y,t)\subset S(x,\theta t)$.
From this property it is easy to show that
$$\rho(y,x)\le \theta\rho(x,y)$$
and
$$\rho(x,y)\le \theta^2 \big(\rho(x,z)+\rho(z,y)\big).$$
Let $\bar \rho(x,y):=\frac 12(\rho(x,y)+\rho(y,x))$. Then $\bar \rho$ is a quasi-metric on $\Bbb R^n$ in the sense of Coifman and Weiss; that is,
\begin{enumerate}
\item[(i)] $\bar \rho(x,y)= \bar \rho(y,x)\geq 0$ for all $x$, $y\in\Bbb R^n$;
\item[(ii)] $\bar \rho(x,y) = 0$ if and only if $x = y$;
\item[(iii)] the \emph{quasi-triangle inequality} holds: there is a constant $A_0\in [1,\infty)$ such that
\begin{eqnarray}\label{eqn:quasitriangleineq}
    \bar \rho(x,y) \le A_0 [\bar \rho(x,z) + \bar \rho(z,y)]\qquad\text{for all}\ x, y, z\in\Bbb R^n.
\end{eqnarray}
\end{enumerate}
Moreover, it is easy to see that $\bar \rho(x,y)$ and $\rho(x,y)$ are geometrically equivalent due to the fact that $\frac 12{\rho(x,y)}\le \bar\rho(x,y)\le \frac{(1+\theta)}2 \rho(x,y)$.
Therefore, all results obtained by Caffarelli and Guti\'errez as mentioned above still hold with replacing $\rho(x,y)$ by $\bar\rho(x,y)$.
From now on, for simplicity we still use the same notation $S(x,t):=\{y\in \Bbb R^n : \bar\rho(x,y)<t\}$ to denote the sections induced by $\bar\rho$
and let $\mathcal F=\{S(x,t) : x\in \Bbb R^n \ \text{and}\ t>0\}$ be the family of sections.
Since $(\Bbb R^n, \bar\rho, \mu)$ is a space of homogeneous type in the sense of Coifman and Weiss,
one might expect that the Besov space and the boundedness of singular integrals associated with sections deduced
by $\bar\rho$ would follow from known results on spaces of homogeneous type. However, this is {\bf not} the case.
To see this, let us recall the theory of classical Besov spaces on $\Bbb R^n.$
It was well known that the Littlewood-Paley theory plays a crucial role for developing function spaces on $\Bbb R^n.$ Let $\psi$ be a Schwartz function satisfying
\begin{enumerate}
\item[(i)] {\rm supp}\,${\hat\psi }\subset\{\xi\in \Bbb R^n: \frac{1}{2}\leq |\xi|\leq 2\};$
\item[(ii)] $|{\hat\psi}(\xi)|\ge C>0$ for $\{\frac{3}{5}\le |\xi|\le\frac{5}{3}\}.$
\end{enumerate}
The classical Besov space $\dot{B}^{\alpha,q}_p(\Bbb R^n)$ is the set of all
$f\in {\mathscr S}'/\mathscr P(\Bbb R^n)$, the space of tempered distributions modulo polynomials, satisfying
$$\|f\|_{\dot{B}^{\alpha,q}_p}:= \bigg(\sum_{k\in \Bbb Z}\Big(2^{k\alpha}\|{\psi_k}*f\|_p\Big)^q\bigg)^{1/q}<\infty,$$
where $\psi_k(x)=2^{kn}\psi(2^kx)$ for $x\in \Bbb R^n$ and $k\in \Bbb Z$.

A crucial tool for the study of Besov spaces is the Calder\'on reproducing formula which was first provided by Calder\'on \cite{Ca}.
This formula says that, for any given function $\psi$ satisfying the above conditions (i) and (ii), there exists a function $\phi$ with the properties similar to $\psi$ such that
\begin{eqnarray}\label{cal1}
f=\sum_{k=-\infty}^{\infty}\phi_k *\psi_k *f,
\end{eqnarray}
where the series converges not only in $L^2(\Bbb R^n)$, but also in ${\mathscr S}_\infty(\Bbb R^n) =\{f\in {\mathscr S}(\Bbb R^n): \int_{\Bbb R^n} f(x) x^\alpha dx =0$
for all $|\alpha|\ge 0\}$ and in ${\mathscr S}'(\Bbb R^n)$, the dual of ${\mathscr S}_\infty(\Bbb R^n)$. See \cite{FJW} for more details.

Applying this reproducing formula, one can show that the definition of $\dot{B}^{\alpha,q}_p(\Bbb R^n)$ is independent of the choice of functions $\psi$
which satisfy the above conditions (i) and (ii). Moreover, using this formula,
one also can study the theory of the Besov spaces which includes the embedding, interpolation, duality, atomic decomposition, and the boundedness
of singular integrals on $\dot{B}^{\alpha,q}_p(\Bbb R^n)$. See \cite{FJW, P, T1, T2} for more details.

The classical theory of Calder\'on-Zygmund singular integral operators as well as the theory of
function spaces on $\Bbb R^n$ were based on extensive use of convolution operators and on the Fourier transform.
However, it is now possible to extend most of those ideas and results to spaces of homogeneous type.
Spaces of homogeneous type were introduced by
Coifman and Weiss \cite{CW1} in the early 1970's. We say
that $(X,d,\mu)$ is a {\it space of homogeneous type} in the
sense of Coifman and Weiss if $d$ is a quasi-metric on $X$
and $\mu$ is a nonzero measure satisfying the doubling
condition. A \emph{quasi-metric} $d$ on a set $X$ is a
function $d:X\times X\mapsto [0,\infty)$ satisfying
\begin{enumerate}
\item[(i)] $d(x,y) = d(y,x) \geq 0$ for all $x,y\in X$;
\item[(ii)] $d(x,y) = 0$ if and only if $x = y$;
\item[(iii)] the \emph{quasi-triangle inequality}: there is a constant $C\in [1,\infty)$ such that
$$d(x,y) \le C [d(x,z) + d(z,y)]\qquad\text{for all}\ x, y, z\in X.$$
\end{enumerate}
We say that a nonzero measure $\mu$ satisfies the
\emph{doubling condition} if there is a constant $C_\mu$ such
that, for all $x\in X$ and $r > 0$,
\begin{eqnarray}\label{doubling condition}
\mu(B_d(x,2r)) \leq C_\mu \mu(B_d(x,r)) < \infty,
\end{eqnarray}
where $B_d(x,r)=\{y\in X: d(x,y)<r\}$.

Spaces of homogeneous type include many special spaces in
analysis and have many applications in the theory of singular
integrals and function spaces. See \cite{CW1,CW2,NS1,NS2} for more
details.

By the end of the 1970's, it was well recognized that much contemporary real analysis requires little structure on the underlying space. For instance, to obtain the maximal function characterizations for the Hardy spaces on spaces of homogeneous type, Mac\'ias and Segovia \cite{MS1} proved that one can
replace the quasi-metric $d$ by another quasi-metric $d'$
on $X$ such that the topologies induced on $X$ by $d$
and $d'$ coincide, and $d'$ has the following regularity property:
\begin{eqnarray}\label{smooth metric}
    |d'(x,y) - d'(x',y)|
    \le C_0 \, d'(x,x')^\varepsilon \,
        [d'(x,y) + d'(x',y)]^{1 - \varepsilon}
\end{eqnarray}
for some constant $C_0$, some regularity exponent $\varepsilon\in(0,1)$, and for all $x$, $x'$, $y\in X$.
Moreover, the measure $\mu$ satisfies
\begin{eqnarray}\label{regular}
   C_1^{-1} r\le \mu(B_{d'}(x,r))\le C_1 r\qquad\text{for some constant}\ C_1.
\end{eqnarray}
Note that property \eqref{regular} is much stronger than the
doubling condition \eqref{doubling condition}. Mac\'{i}as and Segovia \cite{MS2} established the
maximal function characterization for Hardy spaces $H^p(X)$,
$(1 + \varepsilon)^{-1} < p \le 1$, on spaces of homogeneous
type $(X,d',\mu)$ whenever $d'$ and $\mu$ satisfy the regularity
condition \eqref{smooth metric} and property \eqref{regular}, respectively.

The seminal result on spaces of homogeneous type $(X, d', \mu)$ where $d'$ satisfies the condition \eqref{smooth metric} and $\mu$ satisfies the property \eqref{regular} is the $Tb$ theorem given by David, Journ\'e and Semmes \cite{DJS}.
The key step to establish such a $Tb$ theorem is Coifman's construction of the approximation to the identity and the decomposition of the identity.
Coifman's construction of the approximation to the identity is as  follows.
Take a smooth function $h$ defined on
$[0, \infty)$, equals to 1 on $[1, 2]$, and 0 on $[0, 1/2]\cup [4, \infty)$.
Let ${T}_k$ be the operator with
kernel $2^kh(2^k d'(x,y))$. Property \eqref{regular} of the
measure $\mu$ implies that $C^{-1} \leq {T}_k(1) \leq C$ for some $C> 0$.
Let ${M}_k$ and ${W}_k$ be the operators
of multiplications by $1/{T}_k(1)$ and $\{{T}_k[1/{T}_k(1)]\}^{-1}$,
respectively, and let ${S}_k := { M}_k{T}_k{ W}_k{ T}_k{M}_k$. Then
the regularity property \eqref{smooth metric} on the
metric $d$ and property \eqref{regular} on the measure $\mu$
imply that the kernels $S_k(x,y)$ of ${S}_k$ satisfy the
following conditions: for some constants $C > 0$ and $\varepsilon > 0$,
\begin{enumerate}
\item[(i)] ${S}_k(x,y) = 0$ for $d'(x,y) \ge C2^{-k}$,
           and $\| {S}_k\|_{\infty} \leq C2^k$;
\item[(ii)] $|{ S}_k(x,y)-{ S}_k(x',y)| \le C2^{k(1+\varepsilon)}d'(x,x')^{\varepsilon}$;
\item[(iii)] $|{ S}_k(x,y)-{S}_k(x,y')| \le C2^{k(1+\varepsilon)}d'(y,y')^{\varepsilon}$;
\item[(iv)] $\displaystyle \int_{X}{ S}_k(x,y) \, d\mu(y)= \int_{X}{S}_k(x,y) \, d\mu(x)=1$.
\end{enumerate}

Let ${D}_k := {S}_k - {S}_{k-1}$. Coifman's decomposition of the identity is given as follows. If $\mu(X) = \infty$, the identity operator $I$ can be written as
$$I=\sum_{k=-\infty}^\infty {D}_k=\sum_{k=-\infty}^\infty\sum_{j=-\infty}^\infty {D}_k{D}_j={T}_N +{R}_N,$$
where ${ T}_N=\sum_{|k-j|\leq N} {D}_k{ D}_j$ and ${R}_N=\sum_{|k-j|>N}{D}_k{D}_j.$

David, Journ\'e and Semmes showed that if $N$ is a fixed large integer, then ${R}_N$ is bounded on $L^p(X)$, $1<p<\infty,$ with the operator norm less than 1.
Therefore, if $N$ is a fixed large integer and ${D}_k^N=\sum_{|j|\leq N}{D}_{j+k},$ they obtained the following Calder\'on-type reproducing formula
\begin{eqnarray*}\label{cal2}
f=\sum_{k=-\infty}^\infty {T}_N^{-1}{D}_k^N{D}_k(f)=\sum_{k=-\infty}^\infty {D}_k^N{D}_k{T}_N^{-1}(f),
\end{eqnarray*}
where ${T}_N^{-1}$ is the inverse of ${T}_N$ and the series converges in $L^p(X)$, $1<p<\infty$.
Using this Calder\'on-type reproducing formula, they provided
the Littlewood--Paley theory for $L^p(X)$, $1 < p < \infty$. Namely,
for each $1 < p < \infty$, there exists a positive constant $C_p$ such that
$$C_p^{-1}\|f\|_p \le \bigg\|\Big\{\sum_{k}|{D}_k(f)|^2\Big\}^{1/2}\bigg\|_p \le C_p\| f\|_p.$$
The above estimates were the key tool in \cite{DJS} for proving the
$T(b)$ theorem on $(X, d, \mu).$

In \cite{HS}, the Besov space was developed via the Littlewood-Paley theory on spaces of homogeneous type $(X, d, \mu)$
with the regularity property \eqref{smooth metric} on the metric $d$ and property \eqref{regular} on the measure $\mu.$
They first introduced a space of test function $\mathcal M(X)$,
and then proved that $R_N$ defined as above in Coifman's decomposition of the identity is bounded on $\mathcal M(X)$
with the operator norm less than 1 for a fixed large integer $N.$
They showed that, for a fixed large integer $N$ and for each $k,$ ${T}_N^{-1}{D}_k^N$ is a test function;
that is, it satisfies similar conditions as ${D}_k$ does. Therefore, they obtained the following Calder\'on-type reproducing formula.
Let $\{{S}_k\}_{k=-\infty}^\infty$ be any approximation to the identity as in \cite {HS} and ${D}_k={S}_k-{S}_{k-1}.$
There exist families of operators $\{{\widetilde {D}}_k\}_{k=-\infty}^\infty$ and $\{{\widetilde {\widetilde {D}}}_k\}_{k=-\infty}^\infty$ such that
\begin{eqnarray}\label{cal3}
f=\sum_{k=-\infty}^\infty {\widetilde {D}}_k{ D}_k(f)=\sum_{k=-\infty}^\infty {D}_k{\widetilde {\widetilde {D}}}_k(f),
\end{eqnarray}
where the series converges in the space $L^p(X), 1<p<\infty,$ the space $\mathcal M(X)$, and the dual $(\mathcal M(X))^\prime$ of $\mathcal M(X)$.

Note that the formula \eqref{cal3} is similar to \eqref{cal1}.
Thus, the theory of Besov spaces on spaces of homogeneous type $(X, d, \mu)$ with properties \eqref{smooth metric} and \eqref{regular} can be developed as in the case of $\Bbb R^n.$
More precisely, the Besov space on such a space of homogeneous type $(X, d, \mu),$ $\dot{B}^{\alpha,q}_p(X)$ for $1\leq p, q\leq \infty$
and $|\alpha|<\theta,$ where $\theta$ depends on the regularity of the approximation to the identity $S_k,$
is defined to be the collection of all $f\in (\mathcal M(X))^\prime$ such that
$$\|f\|_{\dot{B}^{\alpha,q}_p(X)}=\bigg(\sum_{k\in \Bbb Z} \Big(2^{k\alpha}\|D_k(f)\|_p\Big)^q\bigg)^{1/q}<\infty.$$
Again, applying formula \eqref{cal3}, one can show that Besov spaces $\dot{B}^{\alpha,q}_p(X)$ are independent of the choice of approximations to the identity $\{{\rm S}_k\}$ and, moreover, all properties such as embedding, interpolation, duality, atomic decomposition and the $T1$ theorem were obtained (see \cite{HS, H1, H2, HL}).
If the quasi-metric $d$ satisfies \eqref{smooth metric} but the measure $\mu$ satisfies the doubling and the additional reverse doubling condition; that is, there are constants $\kappa \in (0, d]$ and $c \in (0, 1]$ such that
$$
c \lambda^\kappa \mu ( B_d (x, r) ) \leq \mu ( B_d (x, \lambda r) )
$$
for all $x \in X$, $0 < r < \displaystyle\sup_{x, y \in X} d (x, y) / 2$
and $1\leq \lambda < \displaystyle\sup_{x, y \in X} d (x, y) / 2r,$ the theory of the Besov space can be also established.
The key point is that, when $\mu$ satisfies the doubling and the reverse doubling conditions, one can still introduce test function spaces and distributions; moreover, the formula \eqref{cal3} still holds on
$L^p, 1<p<\infty$, test function spaces and distributions. See \cite{HMY1, HMY2} for more details

We now return to the current situation in this paper.
As mentioned, $(\Bbb R^n, \bar \rho, \mu)$ is space of homogeneous type in the sense of Coifman and Weiss.
Note that the quasi-metric $\bar \rho(x,y)$ may have no regularity and the measure $\mu$ only satisfies the doubling property.
Therefore, the method mentioned above can not be carried over to our situation. To achieve our goal, a new approach  is required.

The departure of our new approach is the following result proved by Mac\'ias and Segovia in \cite{MS1}.

\begin{thm}
Let $d(x,y)$ be a quasi-metric on a set $X$. There exists a
quasi-metric $d'(x,y)$ on $X$ such that
\begin{enumerate}
\item[(i)] $d'(x,y)$ is geometrically equivalent to $d(x,y);$ that is, $C^{-1} d(x,y)\le d'(x,y)\le Cd(x,y)$ for some constant $C>0$ and for all $x, y\in X;$
\item[(ii)] $d'$ satisfies the regularity property \eqref{smooth metric}.
\end{enumerate}
\end{thm}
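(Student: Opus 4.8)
The plan is to use Frink's chain construction. Let $K\ge1$ denote the quasi-triangle constant of $d$, so that $d(x,z)\le K[d(x,y)+d(y,z)]\le 2K\max\{d(x,y),d(y,z)\}$ for all $x,y,z\in X$. Fix once and for all an exponent $\alpha\in(0,1]$ small enough that $(2K)^\alpha\le\sqrt{2}$ (such $\alpha$ exists and is automatically $<1$), and put $\rho:=d^\alpha$. Then $\rho$ is symmetric, vanishes exactly on the diagonal, and satisfies the sharpened inequality
$$\rho(x,z)\le\sqrt{2}\,\max\{\rho(x,y),\rho(y,z)\}\qquad\text{for all }x,y,z\in X.$$
Define
$$\delta(x,y):=\inf\Bigl\{\sum_{i=1}^{n}\rho(z_{i-1},z_i):\ n\in\Bbb N,\ z_0=x,\ z_n=y\Bigr\}.$$
Symmetry and the vanishing-on-the-diagonal property pass from $\rho$ to $\delta$ (reverse chains, and use the one-link chain), and the triangle inequality $\delta(x,y)\le\delta(x,w)+\delta(w,y)$ is immediate since concatenating an $x$-to-$w$ chain with a $w$-to-$y$ chain is an $x$-to-$y$ chain; thus $\delta$ is a genuine metric.

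The heart of the matter is the two-sided comparison
$$\tfrac{1}{2}\,\rho(x,y)\le\delta(x,y)\le\rho(x,y).$$
The right-hand inequality is the one-link chain. For the left-hand inequality it suffices to show that $\rho(z_0,z_n)\le 2\sum_{i=1}^{n}\rho(z_{i-1},z_i)$ for \emph{every} chain $z_0,\dots,z_n$, and this is proved by strong induction on $n$. Write $S:=\sum_{i=1}^{n}\rho(z_{i-1},z_i)$ and let $z_m$ be the last vertex at which the accumulated weight $\sum_{i=1}^{m}\rho(z_{i-1},z_i)$ is still $\le S/2$. Then the initial sub-chain $z_0,\dots,z_m$ has weight $\le S/2$, while by the maximality of $m$ the terminal sub-chain $z_{m+1},\dots,z_n$ has weight $<S/2$; the induction hypothesis gives $\rho(z_0,z_m)\le S$ and $\rho(z_{m+1},z_n)\le S$, the single edge satisfies $\rho(z_m,z_{m+1})\le S$, and two applications of the sharpened inequality yield $\rho(z_0,z_n)\le(\sqrt{2})^{2}S=2S$, closing the induction. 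Taking the infimum over chains gives $\rho(x,y)\le 2\delta(x,y)$.

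Now set $d'(x,y):=\delta(x,y)^{1/\alpha}$. From $\tfrac{1}{2}\,d(x,y)^\alpha\le\delta(x,y)\le d(x,y)^\alpha$ we get $2^{-1/\alpha}d(x,y)\le d'(x,y)\le d(x,y)$, so $d'$ is geometrically equivalent to $d$, and $d'$ is a quasi-metric, being a fixed positive power of the metric $\delta$ (equivalently, being equivalent to the quasi-metric $d$). Finally, $d'$ satisfies the regularity property \eqref{smooth metric} with exponent $\varepsilon=\alpha$: since $\delta$ is a metric, $|\delta(x,y)-\delta(x',y)|\le\delta(x,x')=d'(x,x')^\alpha$, and combining this with the elementary inequality $|a^p-b^p|\le p\,|a-b|\,\max\{a,b\}^{p-1}$ (valid for $a,b\ge0$ and $p:=1/\alpha\ge1$) together with $\max\{\delta(x,y),\delta(x',y)\}\le\delta(x,y)+\delta(x',y)=d'(x,y)^\alpha+d'(x',y)^\alpha\le 2\bigl(d'(x,y)+d'(x',y)\bigr)^\alpha$ produces
$$|d'(x,y)-d'(x',y)|\le\tfrac{1}{\alpha}\,2^{1/\alpha-1}\,d'(x,x')^\alpha\bigl(d'(x,y)+d'(x',y)\bigr)^{1-\alpha}.$$

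The only genuinely delicate point is the induction in the second paragraph (Frink's combinatorial lemma): it is exactly where the failure of the quasi-triangle inequality to be additive bites, and it is what forces the preliminary passage to the power $\alpha$ with $(2K)^\alpha\le\sqrt{2}$ — with a larger exponent, each splitting of a chain picks up a constant exceeding $1$ in front of the induction hypothesis and the estimate degrades with the length $n$. The construction of $\delta$ and the verification that it is a metric are formal, and the passage from $\delta$ to $d'$ is a routine computation once the comparison $\tfrac{1}{2}\rho\le\delta\le\rho$ is in hand.
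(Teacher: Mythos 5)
Your proposal is correct. The paper does not prove this statement at all --- it is quoted as a known result of Mac\'ias and Segovia \cite{MS1} --- and your argument is exactly the standard Frink-type chain construction underlying that theorem: snowflake $d$ to $\rho=d^\alpha$ with $(2K)^\alpha\le\sqrt2$ (note $K\ge1$ forces $\alpha\le1/2$, so the regularity exponent $\varepsilon=\alpha$ indeed lies in $(0,1)$), prove the chain comparison $\tfrac12\rho\le\delta\le\rho$ by the splitting induction, and then undo the snowflake via $d'=\delta^{1/\alpha}$, obtaining \eqref{smooth metric} with $C_0=\alpha^{-1}2^{1/\alpha-1}$ from the mean-value inequality for $t\mapsto t^{1/\alpha}$. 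The only point you leave implicit is the degenerate case $S=0$ in the induction (where the chosen index could be $m=n$); there all links vanish, hence all points coincide and the claim is trivial, so this is a cosmetic omission rather than a gap.
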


Based on the above theorem, we may assume that $(\Bbb R^n, \bar \rho, \mu)$ is a space of homogeneous type
where the quasi-metric  $\bar \rho$ satisfies the regularity condition \eqref{smooth metric} and the measure $\mu$ satisfies the doubling property.
Under these assumptions, applying Coifman's idea, we still can construct the approximation to the identity associated with $\mathcal F$ (see Lemma 2.1 below for the existence).
We first give the definition as follows. Here and throughout this paper, $V_k(x)$ always denotes the measure $\mu(S(x,2^{-k}))$ for $k\in \Bbb Z$ and $x\in \Bbb R^n$.

\begin{defn}\rm
Let $\bar \rho$ and $\varepsilon$  satisfy condition \eqref{smooth metric}.
A sequence of operators $\{S_k\}_{k\in \Bbb Z}$ is said to be an {\it approximation to the identity associated with $\mathcal F$}
 if there exists a constant $C > 0$ such that, for all $k\in\mathbb{Z}$ and all $x$, $x'$, $y$, $y'\in \Bbb R^n$, the kernels $S_k(x,y)$ of $S_k$ satisfy the following conditions:
\begin{enumerate}
\item[(i)] $S_k(x,y)=0$ if ${\bar \rho}(x,y)> C2^{-k}$ (which means that each $S_k(\cdot, y)$ is supported on the section $S(y, C2^{-k})$ and
    each $S_k(x, \cdot)$ is supported on the section $S(x, C2^{-k})$); \vskip 0.2cm
\item[(ii)] $\displaystyle |S_k(x,y)|\le \frac C{V_k(x)+V_k(y)}$; \vskip 0.2cm
\item[(iii)] $\displaystyle |S_k(x,y)-S_k(x',y)|\le C\frac{(2^k{\bar \rho}(x,x'))^\varepsilon}{V_k(x)+V_k(y)}$\qquad for ${\bar \rho}(x,x')\le C2^{-k}$; \vskip 0.2cm
\item[(iv)] $\displaystyle |S_k(x,y)-S_k(x,y')|\le C\frac{(2^k{\bar \rho}(y,y'))^\varepsilon}{V_k(x)+V_k(y)}$\qquad for ${\bar \rho}(y,y')\le C2^{-k}$; \vskip 0.2cm
\item[(v)] $\displaystyle \big|[S_k(x,y)-S_k(x',y)]-[S_k(x,y')-S_k(x',y')]\big|\le C\frac{(2^k{\bar \rho}(x,x'))^\varepsilon(2^k{\bar \rho}(y,y'))^\varepsilon}{V_k(x)+V_k(y)}$
\item[]   for ${\bar \rho}(x,x')\le C2^{-k}$ and ${\bar \rho}(y,y')\le C2^{-k}$;\vskip 0.2cm
\item[(vi)] $\displaystyle \int_{\Bbb R^n} S_k(x,y)d\mu(x)=1\qquad \text{for all}\ y\in \Bbb R^n$; \vskip 0.2cm
\item[(vii)] $\displaystyle \int_{\Bbb R^n} S_k(x,y)d\mu(y)=1\qquad \text{for all}\ x\in \Bbb R^n.$
\end{enumerate}
\end{defn}

Let $D_k=S_k-S_{k-1}$ and suppose that $\mu(\Bbb R^n)=\infty.$ Applying Coifman's decomposition to the identity yields
$$I=\sum_{k=-\infty}^\infty\sum_{j=-\infty}^\infty D_kD_j=\sum_{|k-j|\leq N} {D}_k{ D}_j+\sum_{|k-j|>N}{D}_k{D}_j:=T_N +R_N.$$
By Cotlar-Stein almost orthogonal estimates, one obtains a similar Calder\'on-type reproducing formula
\begin{eqnarray}\label{cal4}
f=\sum_{k=-\infty}^\infty T_N^{-1}D_k^ND_k(f)=\sum_{k=-\infty}^\infty D_k^ND_kT_N^{-1}(f),
\end{eqnarray}
where, as before, $N$ is a fixed large integer, $D_k^N=\sum_{|j|\leq N}D_{j+k}$ and $T_N^{-1}$ is the inverse of $T_N,$
and the series converges in $L^2(\Bbb R^n, d\mu)$ (see the argument right after Lemma \ref{lem 2.3}).

In this paper we do not consider the $L^p$ convergence for $1<p<\infty$ with $p\ne2$, but we still show
that the above Calder\'on-type reproducing formula \eqref{cal4} holds {\bf for certain subspace of $L^2(\Bbb R^n, d\mu)$}, namely the following

\begin{thm}\label{main1}
Let $\{S_k\}_{k\in \Bbb Z}$ be an approximation to the identity associated with $\mathcal F$ on $(\Bbb R^n, \bar \rho, \mu)$, $\mu(\Bbb R^n)=\infty$,
and $D_k=S_k-S_{k-1}$ for $k\in \Bbb Z$.
For $|\alpha|<\varepsilon/4$ and $1\le p, q\le \infty$, if $f\in L^2(\Bbb R^n, d\mu)$ and satisfies
$$\bigg\{\sum_{k\in \Bbb Z} \Big(2^{k\alpha}\|D_k(f)\|_{L^p_\mu}\Big)^q\bigg\}^{1/q}<\infty,$$
then \eqref{cal4} holds with respect to the norm defined by
$\big\{\sum_{k\in \Bbb Z} \big(2^{k\alpha}\|D_k(f)\|_{L^p_\mu}\big)^q\big\}^{1/q}$,
where we make an appropriate modification for $q=\infty$.
\end{thm}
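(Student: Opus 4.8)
\medskip
\noindent\textbf{Proof proposal.} The $L^2(\Bbb R^n,d\mu)$-convergence of \eqref{cal4} being already in hand, the plan is to upgrade it to convergence in the graded norm
$$\|g\|_{\dot B}:=\bigg\{\sum_{k\in\Bbb Z}\Big(2^{k\alpha}\|D_k(g)\|_{L^p_\mu}\Big)^q\bigg\}^{1/q}$$
(with the usual supremum when $q=\infty$). Write $\widetilde D_k:=T_N^{-1}D_k^N$, so that the two identities in \eqref{cal4} read $f=\sum_k\widetilde D_kD_k(f)$ and $f=\sum_k D_k^ND_k(T_N^{-1}f)$, and let $\sigma_M(f)$ denote the symmetric partial sum $\sum_{|k|\le M}$ in either case. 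One has to show $\|f-\sigma_M(f)\|_{\dot B}\to0$ as $M\to\infty$.

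Everything rests on a single almost-orthogonality estimate: for a suitable exponent $\varepsilon'$ with $|\alpha|<\varepsilon'<\varepsilon$ (such $\varepsilon'$ exists because $|\alpha|<\varepsilon/4$ leaves room for the losses below) and a large integer $N$,
\begin{equation}\label{propao}
\|D_i\widetilde D_k\|_{L^p_\mu\to L^p_\mu}\le C\,2^{-|i-k|\varepsilon'}\quad\text{and}\quad\|D_iD_k^N\|_{L^p_\mu\to L^p_\mu}\le C\,2^{-|i-k|\varepsilon'}
\end{equation}
uniformly in $i,k\in\Bbb Z$ and in $1\le p\le\infty$. The second bound reduces, since $D_k^N=\sum_{|l|\le N}D_{k+l}$, to the basic estimate $\|D_iD_j\|_{L^p_\mu\to L^p_\mu}\le C2^{-|i-j|\varepsilon}$, which follows from the support, size, smoothness and cancellation properties (i)--(vii) by the usual kernel computation (write the kernel of $D_iD_j$ using the cancellation of the finer factor, pair it against the H\"older modulus of the coarser one, then apply a Schur estimate, valid for all $p\in[1,\infty]$ thanks to the doubling of $\mu$). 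For the first bound one expands $T_N^{-1}=\sum_{m\ge0}R_N^m$, which is legitimate because $\|R_N\|_{L^2_\mu\to L^2_\mu}\le C2^{-N\varepsilon}<1$ for $N$ large (Cotlar--Stein, as used right after Lemma~\ref{lem 2.3}); then $D_i\widetilde D_k=\sum_{m\ge0}D_iR_N^mD_k^N$, and expanding each $R_N^m$ into chains $D_iD_{a_1}D_{b_1}\cdots D_{a_m}D_{b_m}D_{k+l}$ with $|a_j-b_j|>N$ and tracking decay through all consecutive index gaps (cancellation against regularity at each junction, the constraints $|a_j-b_j|>N$ forcing a total factor $\le(C2^{-N\varepsilon'})^m$, and summing the resulting geometric series over the intermediate indices) gives $\|D_iR_N^mD_k^N\|_{L^p_\mu\to L^p_\mu}\le(C2^{-N\varepsilon'})^m\,2^{-|i-k|\varepsilon'}$, so that choosing $N$ large makes $\sum_m$ converge and yields \eqref{propao}. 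This step is the crux, and I expect it to be the main obstacle: since $\bar\rho$ carries no regularity a priori (only after passing, via Mac\'ias--Segovia, to an equivalent regular quasi-metric) and $\mu$ is merely doubling, these kernel estimates must be carried out by hand using only the section geometry of (A)--(B) and the affine normalizations, with no Fourier transform and no relation $\mu(B(x,r))\sim r$ to lean on. (If \eqref{propao} is recorded among the lemmas preceding Lemma~\ref{lem 2.3}, one simply quotes it.)

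Granting \eqref{propao}, the first identity follows quickly. Since $f\in L^2_\mu$ and \eqref{cal4} converges in $L^2_\mu$, applying the $L^2_\mu$-bounded operator $D_i$ gives $D_i(f-\sigma_M(f))=\sum_{|k|>M}D_i\widetilde D_kD_k(f)$ in $L^2_\mu$; on the other hand, writing $A:=\sup_k2^{k\alpha}\|D_k(f)\|_{L^p_\mu}<\infty$, \eqref{propao} gives $\sum_k\|D_i\widetilde D_kD_k(f)\|_{L^p_\mu}\le CA\,2^{-i\alpha}\sum_{m\in\Bbb Z}2^{-|m|\varepsilon'+m\alpha}<\infty$ because $|\alpha|<\varepsilon'$, so the same series converges absolutely in $L^p_\mu$, and (passing to a subsequence of partial sums converging a.e.) its $L^p_\mu$-sum agrees with its $L^2_\mu$-sum. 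Hence $\|D_i(f-\sigma_M(f))\|_{L^p_\mu}\le C\sum_{|k|>M}2^{-|i-k|\varepsilon'}\|D_k(f)\|_{L^p_\mu}$, so
$$2^{i\alpha}\|D_i(f-\sigma_M(f))\|_{L^p_\mu}\le C\sum_{|k|>M}2^{(i-k)\alpha-|i-k|\varepsilon'}\Big(2^{k\alpha}\|D_k(f)\|_{L^p_\mu}\Big)\le C\sum_{|k|>M}2^{-|i-k|(\varepsilon'-|\alpha|)}\Big(2^{k\alpha}\|D_k(f)\|_{L^p_\mu}\Big),$$
and taking the $\ell^q$-norm in $i$ and applying Young's inequality $\ell^1\ast\ell^q\hookrightarrow\ell^q$ yields
$$\|f-\sigma_M(f)\|_{\dot B}\le C\Big(\sum_{m\in\Bbb Z}2^{-|m|(\varepsilon'-|\alpha|)}\Big)\Big(\sum_{|k|>M}\big(2^{k\alpha}\|D_k(f)\|_{L^p_\mu}\big)^q\Big)^{1/q},$$
which tends to $0$ as $M\to\infty$ because $\{2^{k\alpha}\|D_k(f)\|_{L^p_\mu}\}_k\in\ell^q$.

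For the second identity the summand is $D_k^ND_k(T_N^{-1}f)$, so $D_i(f-\sigma_M(f))=\sum_{|k|>M}(D_iD_k^N)\big(D_k(T_N^{-1}f)\big)$, and the same Young's inequality argument, now using the second bound in \eqref{propao}, closes the estimate provided $\{2^{k\alpha}\|D_k(T_N^{-1}f)\|_{L^p_\mu}\}_k\in\ell^q$, i.e. $T_N^{-1}f\in\dot B$. This last point is obtained from the same Neumann-series kernel analysis: $D_kT_N^{-1}$ inherits, scale by scale, size, H\"older and cancellation bounds of the same shape as $D_k$, whence $\|D_k(T_N^{-1}f)\|_{L^p_\mu}\le C\sum_j2^{-|k-j|\varepsilon'}\|D_j(f)\|_{L^p_\mu}$ and Young's inequality (again using $|\alpha|<\varepsilon'$) gives $\|T_N^{-1}f\|_{\dot B}\le C\|f\|_{\dot B}$. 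Finally, when $q=\infty$ the tail $\sup_{|k|>M}2^{k\alpha}\|D_k(f)\|_{L^p_\mu}$ need not vanish, and the statement is read with the ``appropriate modification'' it already flags: the estimates above still give $\sup_M\|\sigma_M(f)\|_{\dot B}\le C\|f\|_{\dot B}$, and the convergence $\sigma_M(f)\to f$ is then understood in the correspondingly weaker sense (e.g. in $L^2_\mu$ together with this uniform bound in $\|\cdot\|_{\dot B}$). Apart from \eqref{propao}, everything is bookkeeping with Young's inequality and the routine passage between $L^2_\mu$- and $L^p_\mu$-limits.
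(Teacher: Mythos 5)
Your proposal is correct and follows essentially the same route as the paper: both expand $T_N^{-1}$ into a Neumann series in $R_N$, decompose the powers of $R_N$ into chains of $D_j$'s, and apply Lemma \ref{lem 2.3} together with the geometric-mean (averaging) trick to obtain summable decay, the paper packaging this as the contraction estimate \eqref{3.002} for $R_N$ on the graded norm while you package it as the scale-by-scale bound $\|D_iT_N^{-1}D_k^N\|_{L^p_\mu\mapsto L^p_\mu}\lesssim 2^{-|i-k|\varepsilon'}$, after which the tail estimate follows by the same $\ell^q$-convolution argument. One cosmetic correction: the averaging yields only $\varepsilon/2$ decay per junction, so your exponent must satisfy $|\alpha|<\varepsilon'<\varepsilon/2$ (not merely $\varepsilon'<\varepsilon$) and the per-power gain is of the form $2^{-N(\varepsilon/2-\varepsilon')m}$ rather than $2^{-N\varepsilon' m}$, which is harmless since $|\alpha|<\varepsilon/4$ leaves such an $\varepsilon'$ available.
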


This result leads to introduce a {\bf new test function space} as follows.

\begin{defn}\rm
Let $\{S_k\}_{k\in \Bbb Z}$ be an approximation to the identity associated with $\mathcal F$ and $D_k=S_k-S_{k-1}$ for $k\in \Bbb Z$.
For $|\alpha|<\varepsilon/4$ and $1\le p,q\le \infty$, define
$$\dot{\mathcal B}^{\alpha,q}_{p,\mathcal F}=\{f\in L^2(\Bbb R^n, d\mu) : \|f\|_{\dot{\mathcal B}^{\alpha,q}_{p,\mathcal F}}<\infty\},$$
where
$$\|f\|_{\dot{\mathcal B}^{\alpha,q}_{p,\mathcal F}}
=\begin{cases} \displaystyle \bigg(\sum_{k\in \Bbb Z} \Big(2^{k\alpha}\|D_k(f)\|_{L^p_\mu}\Big)^q\bigg)^{1/q} & \text{if}\ 1\le q<\infty \\
          \displaystyle \sup_{k\in \Bbb Z} 2^{k\alpha}\|D_k(f)\|_{L^p_\mu} &\text{if}\ q=\infty \end{cases}.$$
\end{defn}

It is clear that the {\bf test function space $\dot{\mathcal B}^{\alpha,q}_{p,\mathcal F}$} is a subspace of $L^2(\Bbb R^n,d\mu).$
Applying the above Calder\'on-type reproducing formula in \eqref{cal4}, one can show that the test function space
$\dot{\mathcal B}^{\alpha,q}_{p,\mathcal F}$ is independent of the choice of the approximation to the identity (see Proposition \ref{prop 3.2} below).
Let {\bf $\big(\dot{\mathcal B}^{\alpha,q}_{p,\mathcal F}\big)'$ denote the distribution space}
(dual of $\dot{\mathcal B}^{\alpha,q}_{p,\mathcal F})$.
Note that for each fixed $k$ and $x$, the function $D_k(x,\cdot)$ belongs to $\dot{\mathcal B}^{\alpha,q}_{p,\mathcal F}$ for all $|\alpha|<\varepsilon/4$, $1\le p,q\le \infty$,
and thus $D_k(f)$ is well defined for all $f\in (\dot{\mathcal B}^{\alpha,q}_{p,\mathcal F})'$(See the proof in section 4.)
 Moreover, applying the second difference smoothness condition of the approximation to the identity associated with $\mathcal F$,
we will show that the Caldero\'n-type reproducing formula \eqref{cal4} still holds on the distribution (dual) space
as follows.

\begin{thm}\label{main2}
Under the same assumptions as Theorem \ref{main1}, for each $f\in \big(\dot{\mathcal B}^{\alpha,q}_{p,\mathcal F} \big)'$,
\begin{equation}\label{1.13}
\langle f, g\rangle =\sum_{k\in \Bbb Z} \langle T_N^{-1}D_kD_k^N(f), g \rangle =\sum_{k\in \Bbb Z} \langle  D_kD_k^NT_N^{-1}(f), g \rangle, \qquad\forall\ g\in \dot{\mathcal B}^{\alpha,q}_{p,\mathcal F}.
\end{equation}
\end{thm}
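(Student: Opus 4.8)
The plan is to obtain \eqref{1.13} by dualizing the reproducing formula \eqref{cal4} of Theorem~\ref{main1}, applied not to the given approximation to the identity $\{S_k\}$ but to its \emph{transpose}. For each $k\in\Bbb Z$, let $\widetilde S_k$ be the operator with kernel $\widetilde S_k(x,y):=S_k(y,x)$. Since $\bar\rho$ and the sections are symmetric in their two arguments, a direct inspection shows that the conditions (i)--(vii) defining an approximation to the identity associated with $\mathcal F$ are preserved under the interchange $x\leftrightarrow y$: conditions (i), (ii) and (v) are themselves symmetric, while (iii) is exchanged with (iv) and (vi) with (vii). Hence $\{\widetilde S_k\}_{k\in\Bbb Z}$ is again an approximation to the identity associated with $\mathcal F$. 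Setting $\widetilde D_k=\widetilde S_k-\widetilde S_{k-1}$, $\widetilde D_k^N=\sum_{|j|\le N}\widetilde D_{j+k}$, $\widetilde T_N=\sum_{|k-j|\le N}\widetilde D_k\widetilde D_j$ and $\widetilde R_N=\sum_{|k-j|>N}\widetilde D_k\widetilde D_j$, one reads off from the definitions that $\widetilde D_k=(D_k)^{t}$, $\widetilde D_k^N=(D_k^N)^{t}$ and, after relabelling the summation indices, $\widetilde T_N=(T_N)^{t}$, $\widetilde R_N=(R_N)^{t}$, so that $\widetilde T_N^{-1}=\big((T_N)^{t}\big)^{-1}=(T_N^{-1})^{t}$; here $U^{t}$ denotes the transpose of the operator $U$.

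First I would apply Theorem~\ref{main1} to the approximation to the identity $\{\widetilde S_k\}$ and to the function $g\in\dot{\mathcal B}^{\alpha,q}_{p,\mathcal F}$. Since $g\in L^2(\Bbb R^n,d\mu)$ and, by Proposition~\ref{prop 3.2} (independence of the Besov space from the chosen approximation to the identity, together with the accompanying quasi-norm equivalence), $\big\{\sum_{k}(2^{k\alpha}\|\widetilde D_k g\|_{L^p_\mu})^q\big\}^{1/q}<\infty$, the hypotheses of Theorem~\ref{main1} are met for $\{\widetilde S_k\}$. It therefore gives
\begin{equation*}
g=\sum_{k\in\Bbb Z}\widetilde T_N^{-1}\widetilde D_k^N\widetilde D_k(g)=\sum_{k\in\Bbb Z}\widetilde D_k^N\widetilde D_k\widetilde T_N^{-1}(g),
\end{equation*}
both series converging in $\dot{\mathcal B}^{\alpha,q}_{p,\mathcal F}$ (with the usual modification when $q=\infty$). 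Using $(ABC)^{t}=C^{t}B^{t}A^{t}$ together with the identifications above, these rewrite as
\begin{equation*}
g=\sum_{k\in\Bbb Z}\big(D_kD_k^N T_N^{-1}\big)^{t}(g)=\sum_{k\in\Bbb Z}\big(T_N^{-1}D_kD_k^N\big)^{t}(g).
\end{equation*}

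Next I would pair with $f\in\big(\dot{\mathcal B}^{\alpha,q}_{p,\mathcal F}\big)'$. Because $f$ is a continuous linear functional on $\dot{\mathcal B}^{\alpha,q}_{p,\mathcal F}$ and the partial sums of the two series converge there, $f$ may be carried inside each sum. Recalling that, by definition, the action on a distribution of the operators $D_kD_k^NT_N^{-1}$ and $T_N^{-1}D_kD_k^N$ is given by the corresponding transpose pairing — this is consistent with $D_k(f)$ and $D_k^N(f)$ being well defined through the membership of $D_k(x,\cdot)$, $D_k^N(x,\cdot)$ in $\dot{\mathcal B}^{\alpha,q}_{p,\mathcal F}$ noted before, and with $T_N^{-1}$ acting on the dual via the bounded operator $(T_N^{-1})^{t}=\widetilde T_N^{-1}$ on $\dot{\mathcal B}^{\alpha,q}_{p,\mathcal F}$ — one gets
\begin{equation*}
\langle f,g\rangle=\sum_{k\in\Bbb Z}\langle D_kD_k^N T_N^{-1}(f),g\rangle=\sum_{k\in\Bbb Z}\langle T_N^{-1}D_kD_k^N(f),g\rangle,
\end{equation*}
which is \eqref{1.13}.

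I expect the real obstacle to lie not in the manipulation above but in the continuity estimates on the test function space that make every step legitimate: one must know that each of $(D_k^N)^{t}$, $(D_k)^{t}$ and $(T_N^{-1})^{t}=\widetilde T_N^{-1}$ maps $\dot{\mathcal B}^{\alpha,q}_{p,\mathcal F}$ continuously into itself — so that $\big(D_kD_k^NT_N^{-1}\big)^{t}(g)$ and $\big(T_N^{-1}D_kD_k^N\big)^{t}(g)$ belong to $\dot{\mathcal B}^{\alpha,q}_{p,\mathcal F}$ and the pairings with $f$ are meaningful — and, in particular, that the Neumann series $\widetilde T_N^{-1}=\sum_{m\ge0}\widetilde R_N^{\,m}$ converges on $\dot{\mathcal B}^{\alpha,q}_{p,\mathcal F}$ for $N$ large, the analogue for $\{\widetilde S_k\}$ of the bound used to justify \eqref{cal4}. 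Proving these bounds rests on almost orthogonality estimates for the compositions $D_iD_j$ and $D_iD_k^N$ acting on test functions, and it is precisely at this point that the second difference smoothness condition in the definition of an approximation to the identity is needed. By contrast, the verification that $\{\widetilde S_k\}$ is an approximation to the identity associated with $\mathcal F$, and the quasi-norm equivalence underlying the application of Theorem~\ref{main1} to it, are routine.
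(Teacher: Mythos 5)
There is a genuine gap, and it sits exactly where you wave it away. In this paper the action of the operators on a distribution $f\in\big(\dot{\mathcal B}^{\alpha,q}_{p,\mathcal F}\big)'$ is \emph{not} defined by transposition: $D_k(f)$ is the concrete function $x\mapsto \langle f, D_k(x,\cdot)\rangle$ (meaningful because $D_k(x,\cdot)\in\dot{\mathcal B}^{\alpha,q}_{p,\mathcal F}$), and $T_N^{-1}D_kD_k^N(f)$, $D_kD_k^NT_N^{-1}(f)$ in \eqref{1.13} are built by iterating such kernel actions. The whole analytic content of Theorem \ref{main2} is the compatibility statement $\langle f, D_k(g)\rangle=\langle D_k(f),g\rangle$ (claim \eqref{3.51} in the paper), i.e.\ that the duality pairing of $f$ with the test function $D_k(g)$ equals the integral of the pointwise-defined function $D_k(f)$ against $g$. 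Your step ``by definition, the action on a distribution \ldots is given by the corresponding transpose pairing'' simply decrees this compatibility, so your argument proves only the tautology $\langle f,g\rangle=\sum_k\langle f,(D_kD_k^NT_N^{-1})^t(g)\rangle$, which is Theorem \ref{main1} (for the transposed approximation) plus continuity of $f$; it does not establish \eqref{1.13} in the sense the paper intends. The interchange of $\langle f,\cdot\rangle$ with the integral defining $D_k(g)(x)=\int D_k(x,y)g(y)\,d\mu(y)$ cannot be done formally for a general element of this dual space: the paper proves it by (i) boundedness of $D_k$ on $\dot{\mathcal B}^{\alpha,q}_{p,\mathcal F}$, (ii) a Riemann-sum approximation of $D_k(g)$ by finite combinations $\sum_j D_k(\cdot,y_j)\int_{Q_j}g\,d\mu$, where the error is controlled in the $\dot{\mathcal B}^{\alpha,q}_{p,\mathcal F}$ norm using precisely the second-difference condition (v) of Lemma \ref{id}, and (iii) the density of $L^p_\mu\cap\dot{\mathcal B}^{\alpha,q}_{p,\mathcal F}$ in $\dot{\mathcal B}^{\alpha,q}_{p,\mathcal F}$. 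This is the missing piece in your proposal; once \eqref{3.51} is in hand, the iteration across $R_N=\sum_{k'}\sum_{|\ell|>N}D_{k'+\ell}D_{k'}$ transports it to $T_N^{-1}D_k^ND_k$ and yields \eqref{1.13}.

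Two smaller points. First, you locate the role of the second-difference smoothness condition in almost-orthogonality estimates for compositions such as $D_iD_j$; in fact Lemmas \ref{lem 2.2} and \ref{lem 2.3} need only the first-difference conditions, and condition (v) is used solely in the approximation step just described (and again to control $[D_k(f)(y_j)-D_k(f)(y)]\chi_{Q_j}$). Second, the transposition device itself is harmless but adds little here: the paper invokes Lemma \ref{id}(i), so the operators in play are self-adjoint, and the preliminary reduction you perform (expanding $g$ by Theorem \ref{main1} and moving $f$ inside the convergent series) is exactly the paper's first display \eqref{dual}; the boundedness of $T_N^{-1}$, $D_k$, $D_k^N$ on the test space that you flag as the ``real obstacle'' is indeed needed, but it is the comparatively routine part (it follows as in \eqref{3.002} and \eqref{3.1}), not the heart of the matter.
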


Once this reproducing formula is established, we can define the Besov space $\dot B^{\alpha,q}_{p,\mathcal F}$ as follows.

\begin{defn}\rm
For $|\alpha|<\varepsilon/4$ and $1\le p,q\le \infty$, let $p'$ and $q'$ denote the conjugate index of $p$ and $q$, respectively.
Suppose that $\{S_k\}_{k\in \Bbb Z}$ is an approximation to the identity associated with $\mathcal F$ on $(\Bbb R^n, \bar \rho, \mu)$ and set $D_k=S_k-S_{k-1}$.
The {\it Besov spaces associated with $\mathcal F$} are defined to be
$$\dot B^{\alpha,q}_{p,\mathcal F}=\Bigg\{f\in \big(\dot{\mathcal B}^{-\alpha,q'}_{p',\mathcal F}\big)': \|f\|_{\dot B^{\alpha,q}_{p,\mathcal F}}
:= \bigg\{\sum_{k\in \Bbb Z} \Big(2^{k\alpha}\|D_k(f)\|_{L^p_\mu}\Big)^q\bigg\}^{1/q}<\infty\Bigg\}$$
with an appropriate modification for $q=\infty$.
\end{defn}

Again, applying the reproducing formula for distribution spaces, we can develop a theory of the Besov spaces on $(\Bbb R^n, \bar \rho, \mu).$
The main result of this theory is the following

\begin{thm}\label{thm 4.1}
Let $\epsilon_1$ be the constant given in condition {\rm ({\bf A})}, $\gamma$ be the constant given in conditions {\rm ({\bf D}6)} and {\rm ({\bf D}7)},
and $\varepsilon$ be the regularity exponent given in \eqref{smooth metric}.
For $|\alpha|<\min\{\varepsilon, \gamma\epsilon_1\}/4$ and $1\le p,q\le \infty$, the Monge-Amp\`ere singular integral operator $H$ is bounded on $\dot B^{\alpha,q}_{p,\mathcal F}$.
\end{thm}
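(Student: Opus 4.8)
The plan is to run the by now classical ``reproducing formula plus almost orthogonality'' argument, the central observation being that, although $H$ itself fails to be bounded on $L^p_\mu$ at the endpoints $p\in\{1,\infty\}$, every composition $D_kH(T_N^{-1}D_j^N)$ has a kernel that is \emph{as well behaved as that of} $D_k$, so the whole argument can be carried out at the level of these compositions and only the $L^2_\mu$-boundedness of $H$ (proved in \cite{CG3}) is used. Concretely, set $\widetilde D_j:=T_N^{-1}D_j^N$. By the Calder\'on-type reproducing formula \eqref{cal4} --- valid on $L^2_\mu\cap\dot B^{\alpha,q}_{p,\mathcal F}$ by Theorem~\ref{main1}, and for general $f\in\big(\dot{\mathcal B}^{-\alpha,q'}_{p',\mathcal F}\big)'$ in the distributional form \eqref{1.13} of Theorem~\ref{main2} --- we may write $f=\sum_{j\in\Bbb Z}\widetilde D_j D_j(f)$, where for a fixed large $N$ the operators $\widetilde D_j$ have kernels obeying size, smoothness and cancellation bounds of the same form as those of $D_j$ (this is part of the proof of Theorems~\ref{main1}--\ref{main2} and we invoke it freely). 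Applying $D_k$ and inserting the expansion gives
$$D_k(Hf)=\sum_{j\in\Bbb Z}\bigl(D_kH\widetilde D_j\bigr)\bigl(D_j f\bigr),$$
so the whole theorem reduces to producing an exponent $\sigma$ with $|\alpha|<\sigma\le\min\{\varepsilon,\gamma\epsilon_1\}$ for which
$$\sup_{x}\int_{\Bbb R^n}\bigl|D_kH\widetilde D_j(x,y)\bigr|\,d\mu(y)+\sup_{y}\int_{\Bbb R^n}\bigl|D_kH\widetilde D_j(x,y)\bigr|\,d\mu(x)\le C\,2^{-|k-j|\sigma}$$
uniformly in $k,j\in\Bbb Z$: a Schur-type interpolation between these two $L^1$ bounds then yields $\|D_k(Hf)\|_{L^p_\mu}\le C\sum_{j}2^{-|k-j|\sigma}\|D_j f\|_{L^p_\mu}$ for every $1\le p\le\infty$.

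The hard part is this almost orthogonality estimate, and it is where conditions {\rm ({\bf A})}, {\rm ({\bf D}1)}--{\rm ({\bf D}7)} and the engulfing property are all consumed. Writing $K=\sum_i k_i$ and
$$D_kH\widetilde D_j(x,y)=\sum_{i\in\Bbb Z}\iint D_k(x,u)\,k_i(u,v)\,\widetilde D_j(v,y)\,d\mu(u)\,d\mu(v),$$
the first step is to convert the affine-normalized H\"older conditions {\rm ({\bf D}6)}, {\rm ({\bf D}7)} into H\"older estimates in the quasi-metric: if $T$ normalizes $S(y,2^i)$ and $u,u'\in S(y,2^i)$, then {\rm ({\bf A})} together with the engulfing property gives $|T(u)-T(u')|\le C(\bar\rho(u,u')/2^i)^{\epsilon_1}$, so that $k_i(\cdot,y)$ (and symmetrically $k_i(y,\cdot)$) is H\"older of order $\gamma\epsilon_1$ at scale $2^i$ with constant $\lesssim 1/\mu(S(y,2^i))$, while the supports are governed by {\rm ({\bf D}1)}--{\rm ({\bf D}2)} and the $L^1$ normalizations by {\rm ({\bf D}4)}--{\rm ({\bf D}5)}. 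One may assume $k\le j$: since {\rm ({\bf D}1)}--{\rm ({\bf D}7)} are symmetric under interchanging the two variables, the transpose $H^{t}$ (kernel $K(y,x)$) is again a Monge--Amp\`ere singular integral operator, and the case $k>j$ reduces to the case $k<j$ for $H^{t}$. One then estimates $D_kH\widetilde D_j(x,y)$ by distributing the cancellations $\int D_k(x,u)\,d\mu(u)=0$, $\int\widetilde D_j(v,y)\,d\mu(v)=0$ and $\int k_i(u,v)\,d\mu(u)=\int k_i(u,v)\,d\mu(v)=0$ against the H\"older regularity of whichever factor is at the coarsest scale, splitting the $i$-sum at $2^i\sim 2^{-k}$: the range $2^i\lesssim 2^{-k}$ is handled by the cancellation and $\varepsilon$-smoothness of $D_k$ combined with {\rm ({\bf D}5)}, producing a factor $2^{(i+k)\varepsilon}$; the range $2^i\gtrsim 2^{-k}$ is handled by the cancellation and $\gamma\epsilon_1$-smoothness of $k_i$ played against the localization of $D_k$, producing a factor $2^{-(i+k)\gamma\epsilon_1}/\mu(S(x,2^i))$; the doubling of $\mu$ then makes the $i$-sum converge, and bringing in $\widetilde D_j$ through its own cancellation and $\varepsilon$-smoothness supplies the gain $2^{-(j-k)\sigma}$ for any $\sigma<\min\{\varepsilon,\gamma\epsilon_1\}$ --- which exists above $|\alpha|$ because $|\alpha|<\min\{\varepsilon,\gamma\epsilon_1\}/4$. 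Fitting together uniformly the three scales $2^{-k},2^i,2^{-j}$, the normalizing maps, the supports and the cancellations is the genuinely delicate bookkeeping; no new idea beyond this is needed.

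Granting the band estimate $\|D_k(Hf)\|_{L^p_\mu}\le C\sum_j 2^{-|k-j|\sigma}\|D_j f\|_{L^p_\mu}$, the conclusion follows by a discrete convolution inequality. For $1\le q<\infty$,
$$\|Hf\|_{\dot B^{\alpha,q}_{p,\mathcal F}}^{q}=\sum_{k\in\Bbb Z}\Bigl(2^{k\alpha}\|D_k(Hf)\|_{L^p_\mu}\Bigr)^{q}\le C\sum_{k\in\Bbb Z}\Bigl(\sum_{j\in\Bbb Z}2^{-|k-j|\sigma}\,2^{(k-j)\alpha}\bigl(2^{j\alpha}\|D_j f\|_{L^p_\mu}\bigr)\Bigr)^{q},$$
and since $\sum_{m\in\Bbb Z}2^{-|m|\sigma+|m|\,|\alpha|}<\infty$ (as $\sigma>|\alpha|$), Young's inequality for convolution on $\ell^{q}(\Bbb Z)$ bounds the right-hand side by $C\|f\|_{\dot B^{\alpha,q}_{p,\mathcal F}}^{q}$; the case $q=\infty$ is identical with suprema replacing the $\ell^q$ norms. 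Finally, to see that $Hf$ genuinely lies in $\big(\dot{\mathcal B}^{-\alpha,q'}_{p',\mathcal F}\big)'$, so that $\|Hf\|_{\dot B^{\alpha,q}_{p,\mathcal F}}$ is meaningful: $H^{t}$ is a Monge--Amp\`ere singular integral operator, hence the estimate just established --- applied with $(\alpha,p,q)$ replaced by $(-\alpha,p',q')$, and using that $H^{t}$ preserves $L^2_\mu$ --- shows $H^{t}$ is bounded on the test space $\dot{\mathcal B}^{-\alpha,q'}_{p',\mathcal F}$, so $\langle Hf,g\rangle:=\langle f,H^{t}g\rangle$ is well defined for $g\in\dot{\mathcal B}^{-\alpha,q'}_{p',\mathcal F}$, and \eqref{1.13} identifies the $D_k$-coefficients of this $Hf$ with $\sum_j(D_kH\widetilde D_j)(D_j f)$ as used above. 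This completes the proof.
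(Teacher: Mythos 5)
Your reduction hinges on the assertion that $\widetilde D_j:=T_N^{-1}D_j^N$ ``has kernels obeying size, smoothness and cancellation bounds of the same form as those of $D_j$,'' which you invoke freely as being ``part of the proof of Theorems \ref{main1}--\ref{main2}.'' That is not proved there, and it is not available in this setting. The proofs of Theorems \ref{main1} and \ref{main2} never touch the kernel of $T_N^{-1}D_j^N$: they expand $T_N^{-1}=\sum_m (R_N)^m$ and control every composition through the operator-norm almost orthogonality of Lemma \ref{lem 2.3}, precisely because with $\mu$ only doubling (no reverse doubling, no property \eqref{regular}) the classical Han--Sawyer argument showing that $T_N^{-1}D_j^N$ is again a ``test function'' with the same kernel estimates as $D_j$ is unavailable --- circumventing that claim is exactly the ``new approach'' of this paper. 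Consequently your key band estimate for $D_kH\widetilde D_j$ has no foundation: the kernel of $T_N^{-1}D_j^N$ is not supported at scale $2^{-j}$ and no size, H\"older or cancellation bounds for it are known here, so the case analysis you sketch (which needs the factor adjacent to $H$ to be a single-scale kernel with compact support, size, smoothness and mean zero) cannot get started. A second, independent shortfall is that you dispose of the heart of the matter --- the almost orthogonality estimate itself --- with ``delicate bookkeeping; no new idea is needed''; in the paper this is Lemma \ref{lem 4.2}, whose six-case analysis in the auxiliary index $i$ of $H=\sum_i H_i$ occupies most of Section 5, and naming the ingredients (condition ({\bf A}) to convert ({\bf D}6)--({\bf D}7) into H\"older bounds at scale $2^i$, the cancellations ({\bf D}3), the size and smoothness of the $D_k$'s) is not the same as carrying out the estimate.

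The repair is the route the paper actually takes: use the reproducing formula in the form $f=\sum_{k'}D_{k'}^N D_{k'}T_N^{-1}(f)$, so that the composition adjacent to $H$ is $D_kHD_{k'}^N$, a finite sum of operators $D_kHD_{k'+\ell}$ whose factors have fully explicit kernels. Lemma \ref{lem 4.2} then gives the pointwise bound
$$|D_k^\# H D_{k'}^\#(x,y)|\lesssim \frac{2^{-|k-k'|\varepsilon'}}{\mu(S(x,2^{k\vee k'}))+\mu(S(y,2^{k\vee k'}))+\mu(S(x,\bar\rho(x,y)))}\Big(\frac{2^{k\vee k'}}{2^{k\vee k'}+\bar\rho(x,y)}\Big)^{\delta}$$
for any $\varepsilon'<\min\{\varepsilon,\gamma\epsilon_1\}$ (with $\delta=(\min\{\varepsilon,\gamma\epsilon_1\}-\varepsilon')/2$), which upon integration yields exactly the two Schur bounds you wanted, now for $D_kHD_{k'}^N$. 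This gives $\|D_k(Hf)\|_{L^p_\mu}\lesssim\sum_{k'}2^{-|k-k'|\varepsilon'}\|D_{k'}T_N^{-1}(f)\|_{L^p_\mu}$; your discrete convolution step is then fine, and the leftover $T_N^{-1}$ is absorbed not through kernel estimates but through its boundedness on $\dot{\mathcal B}^{\alpha,q}_{p,\mathcal F}$, i.e.\ \eqref{3.1}, after which the density of $\dot{\mathcal B}^{\alpha,q}_{p,\mathcal F}$ in $\dot B^{\alpha,q}_{p,\mathcal F}$ (Theorem \ref{thm 3.4}) handles general $f$. So the skeleton you propose (reproducing formula, Schur-type band estimates, $\ell^q$ convolution, density) is the right one and coincides with the paper's, but as written the two loadbearing steps --- kernel bounds for $T_N^{-1}D_j^N$ and the orthogonality lemma itself --- are respectively unjustified and omitted.
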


We construct the approximation to the identity associated to sections and obtain the almost orthogonality estimate in the next section.
In section 3 the proofs of Calder\'on-type reproducing formulae on test function spaces $\dot{\mathcal B}^{\alpha,q}_{p,\mathcal F}$ and their duals are given.
We discuss the dense subspaces of Besov spaces $\dot{B}^{\alpha,q}_{p,\mathcal F}$ and their dual spaces as well in section 4.
 Finally, Theorem \ref{thm 4.1} is proved in section 5.

Throughout this paper $C$ denotes a constant not necessarily the same
  at each occurrence, and a subscript is added when we wish to make clear
  its dependence on the parameter. We also use $a \wedge b$ and $a\vee b$ to denote $\min\{a, b\}$ and $\max\{a, b\}$ respectively.
We also write $a\lesssim b$ to indicate that $a$ is majorized by $b$ times a constant independent of $a$ and $b$,
while the notation $a\approx b$ denotes both $a\lesssim b$ and $b\lesssim a$.

\section{Existence of the approximation to the identity}\label{approx-idenity}

In this section, we construct the approximation to the identity associated to sections deduced by $\bar\rho$ and $\mu$.
Let $\psi: \mathbb R\mapsto [0,1]$ be a smooth function which is 1 on $(-1, 1)$ and vanishes on $(-\infty, -2)\cup (2, \infty)$.
We define
$$T_k(f)(x)=\int_{\Bbb R^n} \psi(2^k \bar \rho(x,y))f(y)d\mu(y),\quad k\in \Bbb Z.$$
Then
$$T_k(1)(x)\le \int_{\bar \rho(x,y)\le 2^{1-k}}d\mu(y)\le \mu(S(x, 2^{1-k}))\le C\mu(S(x,2^{-k})).$$
Conversely,
$$T_k(1)(x)\ge \int_{\bar \rho(x,y)< 2^{-k}}d\mu(y)=\mu(S(x,2^{-k})).$$
Hence, $T_k(1)(x)\approx \mu(S(x,2^{-k})):=V_k(x)$.
It is easy to check
$V_k(x)\approx V_k(y)$ whenever ${\bar \rho}(x,y)\le  (A_0)^32^{5-k}$. Thus,
\begin{align*}
T_k\bigg(\frac1{T_k(1)}\bigg)(x)
   &=\int_{\Bbb R^n} \psi(2^k {\bar \rho}(x,y))\frac1{T_k(1)(y)}d\mu(y) \\
   &\approx\int_{\Bbb R^n} \psi(2^k {\bar \rho}(x,y)) \frac 1{V_k(y)}d\mu(y) \\
   &\approx \frac 1{V_k(x)}\int_{\Bbb R^n} \psi(2^k {\bar \rho}(x,y))d\mu(y) \\
   &= \frac 1{V_k(x)}T_k(1)(x)\approx 1.
\end{align*}
Let $M_k$ be the operator of multiplication by $M_k(x):=\frac1{T_k(1)(x)}$ and let $W_k$ be operator of multiplication by $W_k(x):=\big[T_k\big(\frac1{T_k(1)}\big)(x)\big]^{-1}$.
We set $S_k=M_kT_kW_kT_kM_k$. Then
the kernel of $S_k$ is
$$S_k(x,y)=\int_{\Bbb R^n} M_k(x)\psi(2^k{\bar \rho}(x,z))W_k(z)\psi(2^k{\bar \rho}(z,y))M_k(y)d\mu(z).$$

\begin{lem}\label{id}
There exists a sequence of operators of $\{S_k\}_{k\in \Bbb Z}$ with kernels $S_k(x,y)$ defined on $\Bbb R^n\times \Bbb R^n$ such that the following properties hold:
\begin{enumerate}
\item[(i)] $S_k(x,y)=S_k(y,x);$
\item[(ii)] $S_k(x,y)=0$ if ${\bar \rho}(x,y)> A_02^{2-k}$\ \ and\ \ $\displaystyle |S_k(x,y)|\le \frac C{V_k(x)+V_k(y)},$ where $A_0$ is the constant in \eqref{eqn:quasitriangleineq}$;$
\item[(iii)] $\displaystyle |S_k(x,y)-S_k(x',y)|\le C\frac{(2^k{\bar \rho}(x,x'))^\varepsilon}{V_k(x)+V_k(y)}$\qquad for ${\bar \rho}(x,x')\le (A_0)^32^{5-k};$
\item[(iv)] $\displaystyle |S_k(x,y)-S_k(x,y')|\le C\frac{(2^k{\bar \rho}(y,y'))^\varepsilon}{V_k(x)+V_k(y)}$\qquad for ${\bar \rho}(y,y')\le (A_0)^32^{5-k};$
\item[(v)] $\displaystyle \big|[S_k(x,y)-S_k(x',y)]-[S_k(x,y')-S_k(x',y')]\big|\le C\frac{(2^k{\bar \rho}(x,x'))^\varepsilon(2^k{\bar \rho}(y,y'))^\varepsilon}{V_k(x)+V_k(y)}$
\item[] for ${\bar \rho}(x,x')\le (A_0)^32^{5-k}$ and ${\bar \rho}(y,y')\le (A_0)^32^{5-k};$
\item[(vi)] $\displaystyle \int_{\Bbb R^n} S_k(x,y)d\mu(x)=1\qquad \text{for all}\ y\in \Bbb R^n;$
\item[(vii)] $\displaystyle \int_{\Bbb R^n} S_k(x,y)d\mu(y)=1\qquad \text{for all}\ x\in \Bbb R^n.$
\end{enumerate}
\end{lem}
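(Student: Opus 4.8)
The plan is to verify directly that the operators $S_k=M_kT_kW_kT_kM_k$ constructed above satisfy (i)--(vii). Property (i) is immediate from the kernel formula, since $\bar\rho$ is symmetric and the roles of $x$ and $y$ in the $z$-integral are interchangeable; consequently (vii) will follow from (i) and (vi). For (ii) the support statement is read off from the two cutoffs $\psi(2^k\bar\rho(x,z))$ and $\psi(2^k\bar\rho(z,y))$ together with \eqref{eqn:quasitriangleineq}: both are nonzero only when $\bar\rho(x,z),\bar\rho(z,y)\le 2^{1-k}$, whence $\bar\rho(x,y)\le A_0 2^{2-k}$. For the size bound I record the elementary facts $M_k(x)\approx V_k(x)^{-1}$ and $W_k(x)\approx 1$ (both essentially established in the computation preceding the lemma), together with the doubling consequence $V_k(x)\approx V_k(z)\approx V_k(y)$ whenever $\bar\rho(x,z),\bar\rho(z,y)\lesssim 2^{-k}$; then $|S_k(x,y)|\lesssim V_k(x)^{-1}V_k(y)^{-1}\int_{\bar\rho(x,z)\le 2^{1-k}}d\mu(z)\lesssim V_k(y)^{-1}\approx (V_k(x)+V_k(y))^{-1}$.

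The substance of the lemma is the regularity estimates (iii)--(v), and the first step is a ``H\"older-regularity at scale $2^{-k}$'' lemma for the auxiliary multipliers. Since $\psi$ is Lipschitz and $\bar\rho$ obeys \eqref{smooth metric} with exponent $\varepsilon$, one shows $|\psi(2^k\bar\rho(x,z))-\psi(2^k\bar\rho(x',z))|\lesssim (2^k\bar\rho(x,x'))^\varepsilon$ whenever $\bar\rho(x,x')\le (A_0)^32^{5-k}$ and the left side is not identically zero; here the support constraint is precisely what lets one replace the factor $[\bar\rho(x,z)+\bar\rho(x',z)]^{1-\varepsilon}$ coming from \eqref{smooth metric} by a constant multiple of $2^{-k(1-\varepsilon)}$. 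Integrating this against $d\mu$ and invoking doubling gives $|T_k(1)(x)-T_k(1)(x')|\lesssim (2^k\bar\rho(x,x'))^\varepsilon V_k(x)$, hence $|M_k(x)-M_k(x')|\lesssim (2^k\bar\rho(x,x'))^\varepsilon V_k(x)^{-1}$; running the same argument once more (now for $T_k(1/T_k(1))$) yields $|W_k(x)-W_k(x')|\lesssim (2^k\bar\rho(x,x'))^\varepsilon$.

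With these building blocks, (iii) follows by writing $S_k(x,y)-S_k(x',y)=M_k(y)\int\big[M_k(x)\psi(2^k\bar\rho(x,z))-M_k(x')\psi(2^k\bar\rho(x',z))\big]W_k(z)\psi(2^k\bar\rho(z,y))\,d\mu(z)$, expanding the bracket by the Leibniz rule into $[M_k(x)-M_k(x')]\psi(2^k\bar\rho(x,z))+M_k(x')[\psi(2^k\bar\rho(x,z))-\psi(2^k\bar\rho(x',z))]$, bounding each piece by the estimates above, and integrating in $z$ over the section of radius $\lesssim 2^{-k}$ (of $\mu$-measure $\lesssim V_k(x)\approx V_k(y)$); property (iv) is then just (i) combined with (iii). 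For (v) one carries out the analogous expansion in both variables at once: the kernel is a product $A_k(x,z)\,W_k(z)\,C_k(z,y)$ with $A_k(x,z)=M_k(x)\psi(2^k\bar\rho(x,z))$ and $C_k(z,y)=\psi(2^k\bar\rho(z,y))M_k(y)$, so the second difference equals $\int\big[A_k(x,z)-A_k(x',z)\big]W_k(z)\big[C_k(z,y)-C_k(z,y')\big]\,d\mu(z)$, each bracket being controlled exactly as in (iii) and the $z$-integral supplying one more factor $V_k(x)\approx V_k(y)$. Finally, (vi) is Coifman's cancellation computation: by Fubini (all factors being nonnegative) and the symmetry of $\psi(2^k\bar\rho(\cdot,\cdot))$, $\int S_k(x,y)\,d\mu(x)=\int W_k(z)\psi(2^k\bar\rho(z,y))M_k(y)\big[T_k(M_k)(z)\big]\,d\mu(z)$, and since $T_k(M_k)=T_k(1/T_k(1))=1/W_k$ this collapses to $M_k(y)T_k(1)(y)=1$, with (vii) following from (i).

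I expect the only genuinely delicate point to be the bookkeeping in the regularity estimates: one must ensure that the ``bad'' factor $[\bar\rho(x,z)+\bar\rho(x',z)]^{1-\varepsilon}$ supplied by \eqref{smooth metric} is always absorbed into $2^{-k(1-\varepsilon)}$, which forces one to track carefully which enlarged section confines $z$ once the cutoffs are nonzero, and to fix the numerical thresholds (the $(A_0)^32^{5-k}$ and $A_02^{2-k}$ in the statement) so that all the quantities $V_k(x)$, $V_k(z)$, $V_k(x')$, $V_k(y)$, $V_k(y')$ in play stay mutually comparable. Everything else is a routine combination of the doubling property of $\mu$ with Leibniz-type product decompositions.
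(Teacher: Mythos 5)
Your proposal is correct and follows essentially the same route as the paper's proof: a direct verification that $S_k=M_kT_kW_kT_kM_k$ has properties (i)--(vii), using the H\"older estimates for $\psi(2^k\bar\rho(\cdot,z))$ and for $M_k$ derived from \eqref{smooth metric} together with the support constraints, Leibniz-type expansions of the first and second differences, comparability of $V_k(x),V_k(z),V_k(y)$ on the relevant sections, and Coifman's cancellation identity $T_k(1/T_k(1))=1/W_k$ for (vi)--(vii). The only cosmetic differences are that the paper further expands your factorized second difference into four terms $J_1,\dots,J_4$, and your auxiliary estimate for $|W_k(x)-W_k(x')|$ is never actually needed since $W_k$ is only evaluated at the integration variable $z$.
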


\begin{proof}
Property (i) is obvious since $\bar \rho(x,y)=\bar \rho(y,x)$.
(ii) If $S_k(x,y)\neq 0$, then ${\bar \rho}(x,z)\le 2^{1-k}$ and ${\bar \rho}(z,y)\le 2^{1-k}$.
Hence ${\bar \rho}(x,y)\le A_02^{2-k}$.
That is, $S_k(x,y)=0$ when ${\bar \rho}(x,y)> A_02^{2-k}$. The definition of $M_k$ gives
\begin{align*}
|S_k(x,y)|
   &\le \frac 1{T_k(1)(x)}\frac 1{T_k(1)(y)}\int_{{\bar \rho}(x,z)\le 2^{1-k}} \psi(2^k{\bar \rho}(x,z))W_k(z)\psi(2^k{\bar \rho}(z,y))d\mu(z) \\
   &\le C\frac 1{V_k(x)}\frac 1{V_k(y)}\mu(S(x,2^{1-k})) \\
   &\leq\frac C{V_k(y)},
\end{align*}
which implies $|S_k(x,y)|\le \frac C{V_k(x)+V_k(y)}$ whenever ${\bar \rho}(x,y)\le A_02^{2-k}.$

To estimate (iii), we write
\begin{align*}
&S_k(x,y)-S_k(x',y) \\
   &=\int_{\Bbb R^n} [M_k(x)\psi(2^k{\bar \rho}(x,z))-M_k(x')\psi(2^k{\bar \rho}(x',z))]W_k(z)\psi(2^k{\bar \rho}(z,y))M_k(y)d\mu(z) \\
   &= \int_{\Bbb R^n} [M_k(x)-M_k(x')]\psi(2^k{\bar \rho}(x,z))W_k(z)\psi(2^k{\bar \rho}(z,y))M_k(y)d\mu(z)\\
   &\qquad + \int_{\Bbb R^n} M_k(x')[\psi(2^k{\bar \rho}(x,z))-\psi(2^k{\bar \rho}(x',z))]W_k(z)\psi(2^k{\bar \rho}(z,y))M_k(y)d\mu(z)\\
   &:= I_1+I_2.
\end{align*}
For $I_1$, we have
$$|M_k(x)-M_k(x')|=\frac {|T_k(1)(x')-T_k(1)(x)|}{T_k(1)(x')T_k(1)(x)}\approx \frac {|T_k(1)(x')-T_k(1)(x)|}{V_k(x')V_k(x)}.$$
Since $\vert {\bar \rho} (x, z) - {\bar \rho} (y, z)\vert \leq C({\bar \rho} (x,y))^\varepsilon
  [{\bar \rho} (x, z) +{\bar \rho} (y, z)]^{1-\varepsilon}$, we have
\begin{align}\label{eq 2.1}
|\psi(2^k {\bar \rho}(x,y))-\psi(2^k {\bar \rho}(x',y))|
&\le C2^k({\bar \rho} (x,x'))^\varepsilon
           [{\bar \rho} (x, y) +{\bar \rho} (x', y)]^{1-\varepsilon} \nonumber  \\
&\le C2^k2^{-k(1-\varepsilon)}({\bar \rho} (x,x'))^\varepsilon\\
&=C(2^k{\bar \rho} (x,x'))^\varepsilon\qquad\text{for}\ {\bar \rho}(x,x')\le (A_0)^32^{5-k}.\nonumber
\end{align}
Then for ${\bar \rho}(x,x')\le (A_0)^32^{5-k}$,
$$|T_k(1)(x')-T_k(1)(x)|\le \int_{\Bbb R^n} |\psi(2^k {\bar \rho}(x,y))-\psi(2^k {\bar \rho}(x',y))|d\mu(y)\le CV_k(x')(2^k{\bar \rho} (x,x'))^\varepsilon,$$
which yields
\begin{equation}\label{eq 2.2}
|M_k(x)-M_k(x')|\le C(2^k{\bar \rho} (x,x'))^\varepsilon\frac 1{V_k(x)}.
\end{equation}
Therefore,
$$|I_1|\le C(2^k{\bar \rho} (x,x'))^\varepsilon\frac 1{V_k(x)}
\le C(2^k{\bar \rho} (x,x'))^\varepsilon\frac 1{V_k(x)+V_k(y)}\qquad\mbox{for}\ {\bar \rho}(x,x')\le (A_0)^32^{5-k}.$$
For ${\bar \rho}(x,x')\le (A_0)^32^{5-k}$, it follows from \eqref{eq 2.1} that
\begin{align*}
|I_2|
&\le \int_{\Bbb R^n} |M_k(x')[\psi(2^k{\bar \rho}(x,z))-\psi(2^k{\bar \rho}(x',z))]W_k(z)\psi(2^k{\bar \rho}(z,y))M_k(y)|d\mu(z)\\
&\le C(2^k{\bar \rho} (x,x'))^\varepsilon\frac 1{V_k(y)}\le C(2^k{\bar \rho} (x,x'))^\varepsilon\frac 1{V_k(x)+V_k(y)}.
\end{align*}

The proof of (iv) is similar to (iii).

To verify (v), we write
\begin{align*}
&[S_k(x,y)-S_k(x',y)]-[S_k(x,y')-S_k(x',y')] \\
&=\int_{\Bbb R^n} [M_k(x)\psi(2^k{\bar \rho}(x,z))-M_k(x')\psi(2^k{\bar \rho}(x',z))]W_k(z)\\
&\hskip 1cm\times [\psi(2^k{\bar \rho}(z,y))M_k(y)-\psi(2^k{\bar \rho}(z,y'))M_k(y')]d\mu(z) \\
&=\int_{\Bbb R^n} [M_k(x)-M_k(x')]\psi(2^k{\bar \rho}(x,z))W_k(z)[\psi(2^k{\bar \rho}(z,y))-\psi(2^k{\bar \rho}(z,y'))]M_k(y)d\mu(z) \\
&\qquad+ \int_{\Bbb R^n} [M_k(x)-M_k(x')]\psi(2^k{\bar \rho}(x,z))W_k(z)\psi(2^k{\bar \rho}(z,y'))[M_k(y)-M_k(y')]d\mu(z) \\
&\qquad+ \int_{\Bbb R^n} M_k(x')[\psi(2^k{\bar \rho}(x,z))-\psi(2^k{\bar \rho}(x',z))]W_k(z)\\
&\hskip 2cm\times [\psi(2^k{\bar \rho}(z,y))-\psi(2^k{\bar \rho}(z,y'))]M_k(y)d\mu(z) \\
&\qquad+ \int_{\Bbb R^n} M_k(x')[\psi(2^k{\bar \rho}(x,z))-\psi(2^k{\bar \rho}(x',z))]W_k(z)\psi(2^k{\bar \rho}(z,y'))[M_k(y)-M_k(y')]d\mu(z) \\
&:=J_1+J_2+J_3+J_4.
\end{align*}
To estimate $J_1$, we use \eqref{eq 2.1} and \eqref{eq 2.2} for ${\bar \rho}(x,x')\le (A_0)^32^{5-k}$ and ${\bar \rho}(y,y')\le (A_0)^32^{5-k}$ combined with the
support condition of $\psi$ to get
$$|J_1|\le C(2^k{\bar \rho} (x,x'))^\varepsilon(2^k{\bar \rho} (y,y'))^\varepsilon\frac 1{V_k(x)+V_k(y)}.$$
Similarly, for
${\bar \rho}(x,x')\le (A_0)^32^{5-k}$ and ${\bar \rho}(y,y')\le (A_0)^32^{5-k}$,
$$|J_2|+|J_3|+|J_4|\le C(2^k{\bar \rho} (x,x'))^\varepsilon(2^k{\bar \rho} (y,y'))^\varepsilon\frac 1{V_k(x)+V_k(y)}.$$

For (vi), we have
\begin{align*}
\int S_k(x,y)d\mu(x)
&=\iint M_k(x)\psi(2^k{\bar \rho}(x,z))W_k(z)\psi(2^k{\bar \rho}(z,y))M_k(y)d\mu(z)d\mu(x) \\
&=\int \bigg(\int \psi(2^k{\bar \rho}(z,x))M_k(x)d\mu(x)\bigg)W_k(z)\psi(2^k{\bar \rho}(z,y))M_k(y)d\mu(z) \\
&=\int \bigg[T_k\Big(\frac1{T_k(1)}\Big)(z)\bigg]W_k(z)\psi(2^k{\bar \rho}(z,y))M_k(y)d\mu(z) \\
&=M_k(y) \int \psi(2^k{\bar \rho}(z,y))d\mu(z) \\
&=M_k(y)T_k(1)(y)=1,
\end{align*}
and (vii) is obtained by the same argument.
\end{proof}

\begin{lem}\label{lem 2.2}
Let $\{S_k\}_{k\in \Bbb Z}$ be an approximation to the identity associated with $\mathcal F$
and set $D_k=S_k-S_{k-1}$ for $k\in \Bbb Z$. There exists a constant $C$ such that
\begin{equation*}
|D_jD_k(x,y)|\le C2^{-|j-k|\varepsilon}\frac 1{V_{\min\{j,k\}}(x)+V_{\min\{j,k\}}(y)}.
\end{equation*}
\end{lem}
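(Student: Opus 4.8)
The statement is the standard almost-orthogonality (Cotlar–Stein-type) estimate for the differences $D_k=S_k-S_{k-1}$ of an approximation to the identity, and the plan is to exploit the cancellation condition $\int D_j(x,z)\,d\mu(z)=0$ (which follows from property (vii) of the definition, since both $S_j$ and $S_{j-1}$ integrate to $1$ in the second variable), together with the size, support, and smoothness estimates (i)–(iv). Without loss of generality assume $j\le k$, so that $\min\{j,k\}=j$; the case $j\ge k$ is symmetric using instead the cancellation $\int D_k(z,y)\,d\mu(z)=0$ and the smoothness of $D_k$ in its first variable. Write
$$D_jD_k(x,y)=\int_{\Bbb R^n}D_j(x,z)D_k(z,y)\,d\mu(z),$$
and insert the cancellation of $D_k$ in its first variable (when $j\le k$ one should instead subtract using the smoothness of $D_j$; let me be precise below). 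The cleaner route when $j\le k$ is to use the cancellation $\int D_k(z,y)\,d\mu(z)=0$ to subtract $D_j(x,y)$:
$$D_jD_k(x,y)=\int_{\Bbb R^n}\big[D_j(x,z)-D_j(x,y)\big]D_k(z,y)\,d\mu(z).$$

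First I would observe that the integrand is supported, in $z$, in the section $S(y,C2^{-k})$ by the support condition (i) on $D_k$; in particular $\bar\rho(z,y)\lesssim 2^{-k}\le 2^{-j}$, so the smoothness estimate (iv) applies to $D_j(x,z)-D_j(x,y)$ and gives
$$\big|D_j(x,z)-D_j(x,y)\big|\lesssim \frac{(2^j\bar\rho(z,y))^\varepsilon}{V_j(x)+V_j(z)}\lesssim 2^{-(k-j)\varepsilon}\,\frac{1}{V_j(x)+V_j(y)},$$
where in the last step I use $\bar\rho(z,y)\lesssim 2^{-k}$ and the fact that $V_j(z)\approx V_j(y)$ when $\bar\rho(z,y)\lesssim 2^{-k}\le 2^{-j}$ (the doubling property of $\mu$, as already used in the proof of Lemma \ref{id}). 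Then I would plug in the size bound (ii) for $|D_k(z,y)|\lesssim 1/(V_k(z)+V_k(y))$ and integrate over $z\in S(y,C2^{-k})$; since $\int_{S(y,C2^{-k})}\frac{d\mu(z)}{V_k(z)+V_k(y)}\lesssim \frac{\mu(S(y,C2^{-k}))}{V_k(y)}\lesssim 1$ (again using $V_k(z)\approx V_k(y)$ on this section and doubling), we obtain
$$|D_jD_k(x,y)|\lesssim 2^{-(k-j)\varepsilon}\,\frac{1}{V_j(x)+V_j(y)}=2^{-|j-k|\varepsilon}\,\frac{1}{V_{\min\{j,k\}}(x)+V_{\min\{j,k\}}(y)},$$
which is the claimed bound.

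The symmetric case $k\le j$ runs the same way with the roles of the two variables swapped: write $D_jD_k(x,y)=\int[D_k(z,y)-D_k(x,y)]D_j(x,z)\,d\mu(z)$ using $\int D_j(x,z)\,d\mu(z)=0$ (property (vii)), note the $z$-integrand is supported in $S(x,C2^{-j})$ by the support of $D_j(x,\cdot)$, apply the smoothness estimate (iii) for $D_k$ in its first variable (valid since $\bar\rho(x,z)\lesssim 2^{-j}\le 2^{-k}$), and finish with the size bound on $D_j$ and the same doubling-plus-volume integration. I expect the only genuinely delicate point to be the bookkeeping with the volume factors $V_{\min\{j,k\}}$: one must consistently use $V_m(u)\approx V_m(v)$ whenever $\bar\rho(u,v)\lesssim 2^{-m'}$ with $m'\ge m$, and check that the quasi-triangle inequality constants do not degrade across the finitely many comparisons; the exponential gain $2^{-|j-k|\varepsilon}$ itself comes out transparently from the single application of the Hölder-type smoothness estimate combined with the support restriction, so that is not where the difficulty lies.
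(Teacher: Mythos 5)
Your proposal is correct and follows essentially the same route as the paper: in each case it exploits the cancellation of the higher-frequency factor (property (vi)/(vii)), the support restriction from property (i), the H\"older-type smoothness estimate (iii)/(iv) of the lower-frequency factor, the size bound (ii), and doubling to compare the volume factors. The only cosmetic difference is that you carry the denominator $V_{\min\{j,k\}}(x)+V_{\min\{j,k\}}(y)$ through the smoothness estimate directly, whereas the paper first obtains $1/V_{\min\{j,k\}}(y)$ (resp.\ $1/V_{\min\{j,k\}}(x)$) and then invokes the comparison $V_m(x)\approx V_m(y)$ on the support at the very end.
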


\begin{proof}
For $k\ge j$, we use vanishing condition of $D_k$ and Lemma \ref{id} (ii), (iv) to get
\begin{align*}
|D_jD_k(x,y)|
&\le \int_{{\bar \rho}(y,z)\le A_02^{3-k}} |D_j(x,z)-D_j(x,y)||D_k(z,y)|d\mu(z) \\
&\le C\int_{{\bar \rho}(y,z)\le A_02^{3-k}} \Big(2^j{\bar \rho}(z,y)\Big)^\varepsilon \frac1{V_j(y)}\frac1{V_k(y)}d\mu(z)\\
&\le C2^{-(k-j)\varepsilon}\frac1{V_j(y)}.
\end{align*}
Similarly, for $k<j$, the vanishing condition of $D_j$  and Lemma \ref{id} (ii), (iii) show
\begin{align*}
|D_jD_k(x,y)|
&\le \int_{{\bar \rho}(x,z)\le A_02^{3-j}} |D_j(x,z)||D_k(z,y)-D_k(x,y)|d\mu(z) \\
&\le C\int_{{\bar \rho}(x,z)\le A_02^{3-j}} \frac1{V_j(x)}\Big(2^k{\bar \rho}(z,x)\Big)^\varepsilon \frac1{V_k(x)}d\mu(z)\\
&\le C2^{-(j-k)\varepsilon}\frac1{V_k(x)}.
\end{align*}
Since $V_k(x)\approx V_k(y)$ when ${\bar \rho}(x,y)\le (A_0)^22^{4-k}$,
the proof is finished.
\end{proof}

By Lemma \ref{id} (ii) and Lemma \ref{lem 2.2}, we immediately have the following result.

\begin{lem}\label{lem 2.3}
Let $\{S_k\}_{k\in \Bbb Z}$ be an approximation to the identity associated with $\mathcal F$
and set $D_k=S_k-S_{k-1}$ for $k\in \Bbb Z$. For $1\le p\le \infty$, there exists a constant $C$ such that
$$\|D_jD_k\|_{L^p_\mu \mapsto L^p_\mu}\le C2^{-|j-k|\varepsilon}.$$
\end{lem}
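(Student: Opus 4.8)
The plan is to deduce the operator bound from the pointwise kernel estimate of Lemma \ref{lem 2.2} by Schur's test. First I would record that $D_jD_k$ is the integral operator with kernel $D_jD_k(x,y)=\int_{\Bbb R^n}D_j(x,z)D_k(z,y)\,d\mu(z)$: the inner integral converges absolutely because, by Lemma \ref{id}(ii), $|D_j(x,z)|\le C/V_j(x)$ is supported in $\bar\rho(x,z)\le A_02^{3-j}$ while $|D_k(z,y)|\le C/V_k(y)$ is bounded, so Fubini applies. The same support information shows that $D_jD_k(x,y)$ is localized near the diagonal: since $D_\ell=S_\ell-S_{\ell-1}$ and Lemma \ref{id}(ii) forces $D_\ell(u,v)=0$ for $\bar\rho(u,v)>A_02^{3-\ell}$, a nonzero product $D_j(x,z)D_k(z,y)$ requires $\bar\rho(x,z)\le A_02^{3-j}$ and $\bar\rho(z,y)\le A_02^{3-k}$, whence the quasi-triangle inequality \eqref{eqn:quasitriangleineq} gives $\bar\rho(x,y)\le C2^{-(j\wedge k)}$ for a constant $C$ depending only on $A_0$.

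With $m:=j\wedge k$, I would then run Schur's test. Bounding $V_m(x)+V_m(y)\ge V_m(x)$ and using Lemma \ref{lem 2.2} together with the diagonal support,
\begin{align*}
\int_{\Bbb R^n}|D_jD_k(x,y)|\,d\mu(y)
&\le C2^{-|j-k|\varepsilon}\int_{\bar\rho(x,y)\le C2^{-m}}\frac{d\mu(y)}{V_m(x)}\\
&=C2^{-|j-k|\varepsilon}\,\frac{\mu\big(S(x,C2^{-m})\big)}{V_m(x)}.
\end{align*}
Iterating the doubling condition \eqref{doubling condition} a number of times depending only on $C$ gives $\mu(S(x,C2^{-m}))\le C'V_m(x)$, so $\sup_x\int|D_jD_k(x,y)|\,d\mu(y)\le C2^{-|j-k|\varepsilon}$. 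The symmetric computation — now using $V_m(x)+V_m(y)\ge V_m(y)$ and integrating in $x$ — yields $\sup_y\int|D_jD_k(x,y)|\,d\mu(x)\le C2^{-|j-k|\varepsilon}$. These two estimates say that $D_jD_k$ is bounded on $L^1_\mu$ and on $L^\infty_\mu$ with operator norm $\le C2^{-|j-k|\varepsilon}$, and interpolation (equivalently, Schur's lemma) then gives the bound on $L^p_\mu$ for all $1\le p\le\infty$.

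Since this is entirely a Schur-test argument fed by Lemma \ref{lem 2.2}, there is no genuine obstacle; the one step requiring a little care is the diagonal localization of the composed kernel at scale $2^{-(j\wedge k)}$ and the resulting control of $\mu(S(x,C2^{-m}))/V_m(x)$ by finitely many applications of the doubling property — without that localization the integral $\int d\mu(y)/(V_m(x)+V_m(y))$ need not even be finite.
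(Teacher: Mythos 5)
Your argument is correct and is essentially the proof the paper intends: the paper deduces Lemma \ref{lem 2.3} ``immediately'' from Lemma \ref{id}(ii) and Lemma \ref{lem 2.2}, which is precisely your Schur-test computation using the size bound $C2^{-|j-k|\varepsilon}/(V_{j\wedge k}(x)+V_{j\wedge k}(y))$, the support of the composed kernel at scale $2^{-(j\wedge k)}$, the doubling property, and interpolation between $L^1_\mu$ and $L^\infty_\mu$. Your filled-in details, in particular the diagonal localization needed to make the $y$- and $x$-integrals finite, are exactly the right ones.
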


Plugging $p=2$ into Lemma \ref{lem 2.3}, the Cotlar-Stein lemma says
$$\|R_N(f)\|_{L^2_\mu}\le C2^{-N\varepsilon}\|f\|_{L^2_\mu}$$
and then $T_N^{-1}$ is bounded on $L^2_\mu$. This yields
$$I=\sum_{k\in \Bbb Z} T_N^{-1}D_k^ND_k=\sum_{k\in \Bbb Z} D_k^ND_kT_N^{-1} \qquad\mbox{in}\ L^2_\mu,$$
which is  \eqref{cal4}.
\section{Calder\'on-type reproducing formulae for $\dot{\mathcal B}^{\alpha,q}_{p,\mathcal F}$ and its dual}\label{besov}

In this section, we show Theorems \ref{main1} and \ref{main2},
which are the Calder\'on-type reproducing formula for $\dot{\mathcal B}^{\alpha,q}_{p,\mathcal F}$ and its dual, respectively.

\begin{proof}[Proof of Theorem \ref{main1}]
We prove the first equality in (\ref{cal4}) only because the proof for the second one is similar.
We first show that if $N$ is chosen to be large enough then there exists a constant $C$ such that
\begin{equation}\label{3.002}
\|R_N(f)\|_{\dot{\mathcal B}^{\alpha,q}_{p,\mathcal F}}
    \le CN^{\frac32}2^{-N(\frac \varepsilon2-2|\alpha|)}\|f\|_{\dot{\mathcal B}^{\alpha,q}_{p,\mathcal F}}.
\end{equation}
To do this, since $R_N$ is bounded on $L^2(\Bbb R^n,d\mu)$ and $f=\sum_{k\in \Bbb Z} T_N^{-1}D_k^ND_k(f)$ in $L^2(\Bbb R^n,d\mu)$, we write
\begin{align*}
\|R_N(f)\|_{\dot{\mathcal B}^{\alpha,q}_{p,\mathcal F}}
&=\bigg\{\sum_{k\in \Bbb Z} \Big(2^{k\alpha}\big\|D_kR_N(f)\big\|_{L^p_\mu}\Big)^q\bigg\}^{1/q}\\
&=\bigg\{\sum_{k\in \Bbb Z} \bigg(2^{k\alpha}\Big\|D_kR_N\Big(\sum_{k'\in \Bbb Z} T_N^{-1}D_{k'}^ND_{k'}(f)\Big)\Big\|_{L^p_\mu}\bigg)^q\bigg\}^{1/q}.
\end{align*}
Observing
\begin{equation}\label{3.2}
\begin{split}
D_kR_N\Big(\sum_{k'} T_N^{-1} D^N_{k'}D_{k'}(f)\Big)(x)
&=D_kR_N \sum_{k'}\sum_{m=0}^\infty (R_N)^m D^N_{k'}D_{k'}(f)(x) \\
&=\sum_{k'}\sum_{m=0}^\infty D_k(R_N)^{m+1} D^N_{k'}D_{k'}(f)(x)
\end{split}
\end{equation}
and plugging  $R_N=\sum_{|k-\ell|>N} D_kD_\ell$ yield
\begin{align*}
D_k(R_N)^{m+1}D^N_{k'}
&=D_k\sum_{|k_0-\ell_0|>N} D_{k_0}D_{\ell_0}\sum_{|k_1-\ell_1|>N} D_{k_1}D_{\ell_1}\cdots\sum_{|k_m-\ell_m|>N} D_{k_m}D_{\ell_m}D^N_{k'}\\
&=\sum_{|k_0-\ell_0|>N}\cdots \sum_{|k_m-\ell_m|>N} D_kD_{k_0}D_{\ell_0}D_{k_1}D_{\ell_1}\cdots D_{k_m}D_{\ell_m}D^N_{k'}.
\end{align*}
Thus
\begin{align*}
\|R_N(f)\|_{\dot{\mathcal B}^{\alpha,q}_{p,\mathcal F}}
&\le  \bigg\{\sum_{k}\bigg(\sum_{k'}
\sum_{m=0}^\infty  \sum_{|k_0-\ell_0|>N}\cdots \sum_{|k_m-\ell_m|>N}  \\
&\qquad\quad 2^{k\alpha}\big\|D_kD_{k_0}D_{\ell_0}D_{k_1}D_{\ell_1}\cdots D_{k_m}D_{\ell_m}D^N_{k'}\big\|_{L^p_\mu\mapsto L^p_\mu}
\|D_{k'}(f)\|_{L^p_\mu}\bigg)^q\bigg\}^{1/q}.
\end{align*}

Note that $D^N_{k'}= \sum_{|j|\le N}D_{k'+j}.$ Applying Lemma \ref{lem 2.3} gives that
\begin{align*}
&\big\|D_kD_{k_0}D_{\ell_0}D_{k_1}D_{\ell_1}\cdots D_{k_m}D_{\ell_m}D^N_{k'}\big\|_{L^p_\mu\mapsto L^p_\mu} \\
&\qquad\le \sum_{|j|\le N} \|D_kD_{k_0}D_{\ell_0}D_{k_1}D_{\ell_1}\cdots D_{k_m}D_{\ell_m}D_{k'+j}\|_{L^p_\mu\mapsto L^p_\mu} \\
&\qquad\le  C\sum_{|j|\le N}2^{-|k-k_0|\varepsilon}2^{-|\ell_0-k_1|\varepsilon}\cdots2^{-|\ell_{m-1}-k_m|\varepsilon}2^{-|\ell_m-k'-j|\varepsilon}
\end{align*}
and
\begin{align*}
\big\|D_kD_{k_0}D_{\ell_0}D_{k_1}D_{\ell_1}\cdots D_{k_m}D_{\ell_m}D^N_{k'}\big\|_{L^p_\mu\mapsto L^p_\mu}
&\le CN\|D_{k_0}D_{\ell_0}D_{k_1}D_{\ell_1}\cdots D_{k_m}D_{\ell_m}\|_{L^p_\mu\mapsto L^p_\mu} \\
&\le CN2^{-|k_0-\ell_0|\varepsilon}2^{-|k_1-\ell_1|\varepsilon}\cdots2^{-|k_m-\ell_m|\varepsilon}.
\end{align*}
Taking an average of these two estimates yields
\begin{align*}
&\big\|D_kD_{k_0}D_{\ell_0}D_{k_1}D_{\ell_1}\cdots D_{k_m}D_{\ell_m}D^N_{k'}\big\|_{L^p_\mu\mapsto L^p_\mu} \\
&\le CN^{\frac12}\sum_{|j|\le N}2^{-|k-k_0|\varepsilon/2}2^{-|k_0-\ell_0|\varepsilon/2}
         2^{-|\ell_0-k_1|\varepsilon/2}\cdots2^{-|\ell_{m-1}-k_m|\varepsilon/2}2^{-|k_m-\ell_m|\varepsilon/2}2^{-|\ell_m-k'-j|\varepsilon/2}.
\end{align*}
Inserting
\begin{align*}
&2^{k\alpha}=2^{(k-k_0)\alpha}2^{(k_0-\ell_0)\alpha}
	2^{(\ell_0-k_1)\alpha}\cdots2^{(\ell_{m-1}-k_m)\alpha}2^{(k_m-\ell_m)\alpha}
		2^{(\ell_m-k'-j)\alpha}2^{(k'+j)\alpha}
	\end{align*}
into the above last estimate implies
\begin{align*}
&2^{k\alpha}\big\|D_kD_{k_0}D_{\ell_0}D_{k_1}D_{\ell_1}\cdots D_{k_m}D_{\ell_m}D^N_{k'}\big\|_{L^p_\mu\mapsto L^p_\mu} \\
&\qquad\le CN^{\frac12}2^{-|k-k_0|(\varepsilon/2-|\alpha|)}2^{-|k_0-\ell_0|(\varepsilon/2-|\alpha|)}\\
&\qquad\quad\times  2^{-|\ell_0-k_1|(\varepsilon/2-|\alpha|)}\cdots2^{-|k_m-\ell_m|(\varepsilon/2-|\alpha|)}
\sum_{|j|\le N}2^{-|\ell_m-k'-j|(\varepsilon/2-|\alpha|)}2^{(k'+j)\alpha}.
\end{align*}
Hence,
\begin{align}\label{3.3}
\begin{split}
&2^{k\alpha}\big\|D_k(R_N)^{m+1} D^N_{k'}\big\|_{L^p_\mu\mapsto L^p_\mu}\\
&\le CN^{1/2}\sum_{|k_0-\ell_0|>N}\cdots \sum_{|k_m-\ell_m|>N}
2^{-|k-k_0|(\varepsilon/2-|\alpha|)}2^{-|k_0-\ell_0|(\varepsilon/2-|\alpha|)}\\
&\qquad\times2^{-|\ell_0-k_1|(\varepsilon/2-|\alpha|)}\cdots2^{-|k_m-\ell_m|(\varepsilon/2-|\alpha|)}
\sum_{|j|\le N}2^{-|\ell_m-k'-j|(\varepsilon/2-|\alpha|)}2^{(k'+j)\alpha}.
\end{split}
\end{align}
Applying H\"older's inequality gives
\begin{align*}
&\|R_N(f)\|_{\dot{\mathcal B}^{\alpha,q}_{p,\mathcal F}}\\
&\quad\le CN^{1/2} \bigg\{\sum_{k}\bigg(\sum_{k'}
\sum_{m=0}^\infty  \sum_{|k_0-\ell_0|>N}\cdots \sum_{|k_m-\ell_m|>N}\sum_{|j|\leq N} 2^{-|k-k_0|(\varepsilon/2-|\alpha|)}2^{-|k_0-\ell_0|(\varepsilon/2-|\alpha|)} \\
&\hskip 2cm \times 2^{-|\ell_0-k_1|(\varepsilon/2-|\alpha|)}\cdots2^{-|k_m-\ell_m|(\varepsilon/2-|\alpha|)}
2^{-|\ell_m-k'-j|(\varepsilon/2-|\alpha|)}\bigg)^{q/q'} \\
&\hskip 2cm \times \bigg(\sum_{k'}\sum_{m=0}^\infty  \sum_{|k_0-\ell_0|>N}\cdots \sum_{|k_m-\ell_m|>N}\sum_{|j|\le N} 2^{-|k-k_0|(\varepsilon/2-|\alpha|)}2^{-|k_0-\ell_0|(\varepsilon/2-|\alpha|)} \\
&\hskip 2cm \times
         2^{-|\ell_0-k_1|(\varepsilon/2-|\alpha|)}\cdots2^{-|k_m-\ell_m|(\varepsilon/2-|\alpha|)}2^{-|\ell_m-k'-j|(\varepsilon/2-|\alpha|)}\\
&\hskip 2cm \times \Big(2^{(k'+j)\alpha}\|D_{k'}(f)\|_{L^p_\mu}\Big)^q\bigg)\bigg\}^{1/q}.
\end{align*}
Observe that if we choose $N$ large enough so that $2^{-N(\varepsilon/2 -|\alpha|)}<1$ then
\begin{align*}
&\sum_{k'}\sum_{m=0}^\infty  \sum_{|k_0-\ell_0|>N}\cdots \sum_{|k_m-\ell_m|>N}
2^{-|k-k_0|(\varepsilon/2-|\alpha|)}2^{-|k_0-\ell_0|(\varepsilon/2-|\alpha|)}\\
&\hskip 5cm \times  2^{-|\ell_0-k_1|(\varepsilon/2-|\alpha|)}\cdots2^{-|k_m-\ell_m|(\varepsilon/2-|\alpha|)}
 2^{-|\ell_m-k'-j|(\varepsilon/2-|\alpha|)}\\
&\qquad \le C2^{-N(\varepsilon/2 -|\alpha|)}
\end{align*}
and
\begin{align*}
&\sum_{k}\sum_{m=0}^\infty  \sum_{|k_0-\ell_0|>N}\cdots \sum_{|k_m-\ell_m|>N}
2^{-|k-k_0|(\varepsilon/2-|\alpha|)}2^{-|k_0-\ell_0|(\varepsilon/2-|\alpha|)}\\
&\hskip 5cm \times  2^{-|\ell_0-k_1|(\varepsilon/2-|\alpha|)}\cdots2^{-|k_m-\ell_m|(\varepsilon/2-|\alpha|)}
 2^{-|\ell_m-k'-j|(\varepsilon/2-|\alpha|)}\\
&\qquad \le C2^{-N(\varepsilon/2 -|\alpha|)}.
\end{align*}
Note that $\sum_{|j|\le N}2^{j\alpha}\le CN2^{N|\alpha|}.$ Finally, we have
\begin{align*}
\|R_N(f)\|_{\dot{\mathcal B}^{\alpha,q}_{p,\mathcal F}}
&\le CN^{3/2}2^{-N(\varepsilon/2-2|\alpha|)}\bigg\{\sum_{k'\in \Bbb Z}\Big(2^{k'\alpha}\|D_{k'}(f)\|_{L^p_\mu} \Big)^q\bigg\}^{1/q}\\
&\le CN^{3/2}2^{-N(\varepsilon/2-2|\alpha|)}\|f\|_{\dot{\mathcal B}^{\alpha,q}_{p,\mathcal F}}
\end{align*}
which gives the estimate in \eqref{3.002}.

Note that $T_N^{-1}=(I-R_N)^{-1}=\sum_{m=0}^\infty (R_N)^m$ and choose $N$ large enough such that $CN^{3/2}2^{-N(\varepsilon/2-2|\alpha|)}<1$, so \eqref{3.002}  implies
\begin{equation}\label{3.1}
\|T_N^{-1}(f)\|_{\dot{\mathcal B}^{\alpha,q}_{p,\mathcal F}}\le C_N\|f\|_{\dot{\mathcal B}^{\alpha,q}_{p,\mathcal F}},
\end{equation}
which shows that $T^{-1}_N$ is bounded on $\dot{\mathcal B}^{\alpha,q}_{p,\mathcal F}.$
In order to prove that $\sum_{k\in \Bbb Z} T^{-1}_ND^N_kD_k(f)$ converges to $f$ in
$\dot{\mathcal B}^{\alpha,q}_{p,\mathcal F}$, we observe that
$$f(x)- \sum_{|k|\le M} T^{-1}_ND^N_kD_k(f)(x)= \sum_{|k|>M} T^{-1}_ND^N_kD_k(f)(x)\qquad\text{for}\
  f\in L^2_\mu.$$
Therefore, we only need to show
$$\lim_{M\to \infty}\bigg\|\sum_{|k|>M} T^{-1}_ND^N_kD_k(f)\bigg\|_{\dot{\mathcal B}^{\alpha,q}_{p,\mathcal F}}=0.$$
Indeed, by \eqref{3.1},
$$\bigg\|\sum_{|k|>M} T^{-1}_ND^N_kD_k(f)\bigg\|_{\dot{\mathcal B}^{\alpha,q}_{p,\mathcal F}}
\le C_N \bigg\|\sum_{|k|>M} D^N_kD_k(f)\bigg\|_{\dot{\mathcal B}^{\alpha,q}_{p,\mathcal F}}.$$
The same argument as the proof of \eqref{3.002} yields
$$\bigg\|\sum_{|k|>M} T^{-1}_ND^N_kD_k(f)\bigg\|_{\dot{\mathcal B}^{\alpha,q}_{p,\mathcal F}} \le C_N\bigg\{\sum_{|k|>M} \Big(2^{k\alpha}\|D_k(f)\|_{L^p_\mu}\Big)^q\bigg\}^{1/q}.$$
The assumption of $f$ shows that the right hand side of the above inequality goes to $0$ as $M\to \infty$,
and hence the first equality in (\ref{cal4}) holds.
\end{proof}
We now prove Theorem \ref{main2}.
\begin{proof}[Proof of Theorem \ref{main2}]
For $g\in \dot{\mathcal B}^{\alpha,q}_{p,\mathcal F}$ and
$f\in \big(\dot{\mathcal B}^{\alpha,q}_{p,\mathcal F} \big)'$, Theorem \ref{main1} says
\begin{equation}\label{dual}
\langle f,g \rangle=\bigg\langle f, \sum_{k\in \Bbb Z} T^{-1}_ND_k^ND_k(g) \bigg\rangle =  \sum_{k\in \Bbb Z} \langle f, T^{-1}_ND_k^ND_k(g)\rangle.
\end{equation}
Since $S_k$ is self-adjoint (Lemma \ref{id} (i)), operators $D_k$, $D^N_k$ and $T^{-1}_N$ are all self-adjoint. Therefore, it suffices to show
\begin{equation}\label{adj}
\langle f, T^{-1}_ND_k^ND_k(g)\rangle = \langle(D_k)^*(D_k^N)^*(T^{-1}_N)^*(f), g \rangle=\langle D_kD_k^N T^{-1}_N(f), g \rangle.
\end{equation}
Taking the summation for $k\in \Bbb Z$ on both sides of (\ref{adj}) yields the second equality of (\ref{1.13}).
The argument for the first equality in (\ref{1.13}) is similar, and we omit the details.

To give a rigorous proof of \eqref{adj}, we claim
  \begin{equation}\label{3.51}
  \big\langle f, D_{k}(g)\big\rangle=\big\langle D_{k}(f),g\big\rangle
\qquad\text{for}\ g\in \dot{\mathcal B}^{\alpha,q}_{p,\mathcal F},\ f\in \big(\dot{\mathcal B}^{\alpha,q}_{p,\mathcal F} \big)'.
  \end{equation}
Assuming the claim for the moment, we have
\begin{align*}
	\big\langle f, D_{k'+\ell}D_{k'}(R_{N})^{m-1}D_{k}^{N}D_{k}(g)\big\rangle
	&=\big\langle D_{k'+\ell}(f),D_{k'}(R_{N})^{m-1}D_{k}^{N}D_{k}(g)\big\rangle\\
	&=\big\langle D_{k'}D_{k'+\ell}(f),(R_{N})^{m-1}D_{k}^{N}D_{k}(g)\big\rangle.
\end{align*}
Since $R_N$ can be expressed to be $R_N=\sum_{k'\in \mathbb Z}\sum_{|\ell|>N} D_{k'+\ell}D_{k'}=\sum_{k'\in \mathbb Z}\sum_{|\ell|>N}  D_{k'}D_{k'+\ell}$,
we take the summation $\sum_{k'\in \mathbb Z}\sum_{|\ell|>N}$ on both sides to obtain
$$\big\langle f, R_N(R_{N})^{m-1}D_{k}^{N}D_{k}(g)\big\rangle=\big\langle R_{N}(f),(R_{N})^{m-1}D_{k}^{N}D_{k}(g)\big\rangle.$$
Repeating the same process $m$ times, we obtain
$$\big\langle f, T^{-1}_{N}D_{k}^{N}D_{k}(g)\big\rangle=\big\langle T_{N}^{-1}(f),D_{k}^{N}D_{k}(g)\big\rangle$$
and then
$$\big\langle f, T^{-1}_{N}D_{k}^{N}D_{k}(g)\big\rangle=\big\langle D_{k}D_{k}^{N}T_{N}^{-1}(f),g\big\rangle,$$
which and \eqref{dual} give us
$$\langle f,g \rangle= \sum_{k\in \mathbb Z} \big\langle D_{k}D_{k}^{N}T_{N}^{-1}(f),g\big\rangle.$$
The first equality of \eqref{1.13} can be obtained similarly.

We now return to the proof of claim \eqref{3.51}, which contains three steps:
{\leftmargini=1.52cm
\begin{itemize}
\item[{\bf Step 1.}] Show that each $D_k$ is bounded on $\dot{\mathcal B}^{\alpha,q}_{p,\mathcal F}$ for all $|\alpha|<\frac{\varepsilon}{4}$ and $1\leq p,q\leq\infty$.
\item[{\bf Step 2.}] Show that
$\langle f,D_{k}(g)\rangle=\langle D_{k}(f),g\rangle$ for all $f\in(\dot{\mathcal B}^{\alpha,q}_{p,\mathcal F})'$ and
$g\in\dot{\mathcal B}^{\alpha,q}_{p,\mathcal F}\cap L^{p}_{\mu}$.
\item[{\bf Step 3.}] Show that
$\dot{\mathcal B}^{\alpha,q}_{p,\mathcal F}\subset\overline{L^{p}_{\mu}\cap\dot{\mathcal B}^{\alpha,q}_{p,\mathcal F}}$,
where $\overline{L^{p}_{\mu}\cap\dot{\mathcal B}^{\alpha,q}_{p,\mathcal F}}$ denotes the closure of $L^{p}_{\mu}\cap\dot{\mathcal B}^{\alpha,q}_{p,\mathcal F}$ with respect to
$\|\cdot\|_{\dot{\mathcal B}^{\alpha,q}_{p,\mathcal F}}$.
\end{itemize}}

To prove {\bf step 1}, we use Theorem \ref{main1} to write
\begin{align*}
  \|D_{k}(f)\|_{\dot{\mathcal B}^{\alpha,q}_{p,\mathcal F}}
  &=\bigg\{\sum_{\ell\in\mathbb Z}2^{\ell\alpha q}\Big\|D_{\ell}D_{k}\Big(\sum_{k'\in\mathbb Z}D_{k'}^{N}D_{k'}T_{N}^{-1}(f)\Big)\Big\|_{L^{p}_{\mu}}^{q}\bigg\}^{\frac{1}{q}}\\
  &\leq\bigg\{\sum_{\ell\in\mathbb Z}2^{\ell\alpha q}\Big(\sum_{k'\in\mathbb Z}\|D_{\ell}D_{k}D_{k'}^{N}\|_{L^{p}_{\mu}\mapsto L^{p}_{\mu}}
              \|D_{k'}T_{N}^{-1}(f)\|_{L^{p}_{\mu}}\Big)^{q}\bigg\}^{\frac{1}{q}}.
  \end{align*}
By the same argument as the proof of \eqref{3.002},
\begin{align*}
 \|D_{k}(f)\|_{\dot{\mathcal B}^{\alpha,q}_{p,\mathcal F}}
 &\lesssim N^{\frac{1}{2}}\bigg\{\sum_{\ell\in\mathbb Z}2^{\ell\alpha q}\Big(\sum_{k'\in\mathbb Z}\sum_{|j|\leq N}2^{-|\ell-k'-j|\frac{\varepsilon}{2}}
 \|D_{k'}T_{N}^{-1}(f)\|_{L^{p}_{\mu}}\Big)^{q}\bigg\}^{\frac{1}{q}}\\
 &\lesssim N^{\frac{1}{2}}\Big(\sum_{|j|\leq N}2^{j\alpha}\Big)\bigg\{\sum_{k'\in\mathbb Z}
  2^{k'\alpha q}\|D_{k'}T_{N}^{-1}(f)\|_{L^{p}_{\mu}}^{q}\bigg\}^{\frac{1}{q}}\\
 &\lesssim N^{\frac{3}{2}}2^{N|\alpha|}\|T_{N}^{-1}(f)\|_{\dot{\mathcal B}^{\alpha,q}_{p,\mathcal F}}\\
  &\lesssim N^{\frac{3}{2}} 2^{N|\alpha|}C_N \|f\|_{\dot{\mathcal B}^{\alpha,q}_{p,\mathcal F}}.
   \end{align*}

To show {\bf step 2}, for $g\in\dot{\mathcal B}^{\alpha,q}_{p,\mathcal F}\cap L^{p}_{\mu}$, we define
  $$g_{k,M}(x)=\int_{S(0,M)}D_{k}(x,y)g(y)d\mu(y),\qquad M>0,$$
  where $S(0,M)$ denotes the section $\{y\in\mathbb R^n:\bar \rho(0,y)<M\}$. By step 1,
  $$\|D_{k}(g)-g_{k,M}\|_{\dot{\mathcal B}^{\alpha,q}_{p,\mathcal F}}=\|D_{k}(g\chi_{\mathbb R^{n}\setminus S(0,M)})\|_{\dot{\mathcal B}^{\alpha,q}_{p,\mathcal F}}
    \lesssim  N^{\frac{3}{2}}2^{N|\alpha|} C_N \|g\chi_{\mathbb R^{n}\setminus S(0,M)}\|_{\dot{\mathcal B}^{\alpha,q}_{p,\mathcal F}}\to 0$$
as $M\to \infty$. Thus,
\begin{equation}\label{3.7}
  \begin{split}
  \langle f,D_{k}(g)\rangle=\lim_{M\to\infty}\langle f,g_{k,M}\rangle.
  \end{split}
  \end{equation}
Since $\{\text{int}\ S(z,2^{-(k+J)})\}_{z\in S(0,M)}$ is an open covering of $S(0,M)$,
there exist finite number of sections $\{S(z_j,2^{-(k+J)})\}_{j=1}^{N_J}$, $z_j\in S(0,M)$,
such that $S(0,M)\subset \bigcup_{j=1}^{N_J} S(z_j,2^{-(k+J)})$. Let
\begin{enumerate}
\item[] $Q_1=S(0,M)\bigcap S(z_1,2^{-(k+J)})$;
\item[] $Q_2=S(0,M)\bigcap S(z_2,2^{-(k+J)})\backslash Q_1$;
\item[] $Q_3=S(0,M)\bigcap S(z_3,2^{-(k+J)})\backslash (Q_1 \cup Q_2)$;
\item[] \hskip1.9cm$\vdots$
\item[] $Q_{N_J}=S(0,M)\bigcap S(z_{N_J}, 2^{-(k+J)})\backslash \bigcup_{j=1}^{N_J-1} Q_j$.
\end{enumerate}
Then $\{Q_j\}_{j=1}^{N_J}$ are disjoint and $\bigcup_{j=1}^{N_J} Q_j=S(0,M)$. Now we write
\begin{align*}
g_{k,M}(x)&=\sum_{j=1}^{N_J}\int_{Q_j} D_k(x,y)g(y)d\mu(y) \\
&=\sum_{j=1}^{N_J}\int_{Q_j} [D_k(x,y)-D_k(x,y_j)]g(y)d\mu(y) \\
&\quad+\sum_{j=1}^{N_J}D_k(x,y_j)\int_{Q_j} g(y)d\mu(y) \\
&:=g^1_{k,M,J}(x)+g^2_{k,M,J}(x),
\end{align*}
where $y_j$ is any point in $Q_j$.
To consider $\|g^1_{k,M,J}\|_{\dot{\mathcal B}^{\alpha,q}_{p,\mathcal F}}$, the second difference smoothness condition (v) in Lemma \ref{id}  will be used.
For simplicity of notations, we denote by
$$F_{k,j}(x,y)= [D_k(x,y)-D_k(x,y_j)]\chi_{{}_{Q_j}}(y).$$
Lemma \ref{id} tells us that
\begin{enumerate}
\item[(a)] $\text{supp}\,F_{k,j}(\cdot,y)\subset S(y, 16(A_0)^2 2^{-k})$\ \ and\ \ $\text{supp}\,F_{k,j}(x,\cdot)\subset S(x, 8A_0 2^{-k})$;
\item[(b)] $\displaystyle \int_{\Bbb R^n} F_{k,j}(x,y)d\mu(x)=0$;
\item[(c)] $\displaystyle |F_{k,j}(x,y)|\le C2^{-J\varepsilon}\frac 1{V_{k}(x)+V_{k}(y)}$;
\item[(d)] $\displaystyle |F_{k,j}(x,y)-F_{k,j}(x',y)|\le C2^{-J\varepsilon}2^{k\varepsilon}({\bar \rho}(x,x'))^\varepsilon\frac 1{V_{k}(x)+V_{k}(y)}$,
\end{enumerate}
where $x'$ satisfies ${\bar \rho}(x,x')\le 32(A_0)^32^{-k}$. Under the above conditions (a)$-$(d), using a similar argument to the proofs of Lemmas \ref{id} and \ref{lem 2.2},
we obtain that for all $k,\ell\in \Bbb Z$ and $x,y\in \Bbb R^n$,
\begin{align}
&\text{supp} (D_\ell F_{k,j})(\cdot,y)\subset S(y, 32(A_0)^3(2^{-\ell} \vee  2^{-k}));  \label{3.15}\\
&\text{supp}  (D_\ell F_{k,j})(x,\cdot)\subset  S(x, 16(A_0)^2(2^{-\ell} \vee  2^{-k}));\label{3.16}\\
&|D_\ell F_{k,j}(x,y)|\le C2^{-J\varepsilon}2^{-|\ell-k|\varepsilon}\frac 1{V_{\ell \wedge k}(x)+V_{\ell \wedge k}(y)}.\label{3.17}
\end{align}
Set
$$F(x,y)=\sum_{j=1}^{N_J} (D_\ell F_{k,j})(x,y).$$
By (\ref{3.16}) and (\ref{3.17}),
\begin{align*}
\int_{\Bbb R^n} |F(x,y)|d\mu(y)
&\le C2^{-J\varepsilon}2^{-|\ell-k|\varepsilon}\sum_{j=1}^{N_J}\int_{Q_j\cap S(x, 16(A_0)^2(2^{-\ell} \vee  2^{-k}))}
                        \frac {d\mu(y)}{V_{\ell \wedge k}(x)+V_{\ell \wedge k}(y)}  \\
&\le C2^{-J\varepsilon}2^{-|\ell-k|\varepsilon}\frac {\mu(S(x, 16(A_0)^2(2^{-\ell} \vee  2^{-k})))}{V_{\ell \wedge k}(x)+V_{\ell \wedge k}(y)} \\
&\le C2^{-J\varepsilon}2^{-|\ell-k|\varepsilon}.
\end{align*}
Similarly, (\ref{3.15}) and (\ref{3.17}) yield
$$\int_{\Bbb R^n} |F(x,y)|d\mu(x)\le C2^{-J\varepsilon}2^{-|\ell-k|\varepsilon}. $$
The above two inequalities imply
$$\|D_\ell(g^1_{k,M,J})\|_{L^p_\mu}\le C2^{-J\varepsilon}2^{-|\ell-k|\varepsilon}\|g\|_{L^p_\mu},$$
and then
\begin{equation}\label{3.20}
\begin{split}
\|g^1_{k,M,J}\|_{\dot{\mathcal B}^{\alpha,q}_{p,\mathcal F}}
&\le C2^{-J\varepsilon}\bigg\{\sum_{\ell\in \Bbb Z} 2^{\ell \alpha q-|\ell-k|\varepsilon q}\bigg\}^{1/q}\|g\|_{L^p_\mu} \\
&\le C2^{-J\varepsilon}2^{k\alpha}\|g\|_{L^p_\mu}\\
&\to 0\qquad\text{as}\ J\to \infty.
\end{split}
\end{equation}
By (\ref{3.7}) and (\ref{3.20}), we have
\begin{equation}\label{3.21}
\begin{split}
    \langle f, D_k(g)\rangle
    =&\lim_{M\to \infty}\lim_{J\to \infty}\langle f, g^2_{k,M,J}\rangle\\
    =&\lim_{M\to \infty}\lim_{J\to \infty} \sum_{j=1}^{N_J}D_k(f)(y_j)\int_{Q_j} g(y)d\mu(y),
\end{split}
\end{equation}
where we use Lemma \ref{id} (i) to know that $D_k$ is self-adjoint.
We now write
 \begin{align*}
& \sum_{j=1}^{N_J}D_k(f)(y_j)\int_{Q_j} g(y)d\mu(y) \\
&\quad=\sum_{j=1}^{N_J}\int_{Q_j} D_k(f)(y)g(y)d\mu(y)  \\
&\qquad+\int_{\Bbb R^n} \bigg\{\sum_{j=1}^{N_J}[D_k(f)(y_j)-D_k(f)(y)]\chi_{{}_{Q_j}} \bigg\}g(y)d\mu(y).
\end{align*}
Using the second difference property (v) in Lemma {\ref{id}} again and a similar proof of (\ref{3.20}),
we can show that
$$\big\|[D_k(y_j,\cdot)-D_k(y,\cdot)]\chi_{{}_{Q_j}} \big\|_{\dot{\mathcal B}^{\alpha,q}_{p,\mathcal F}}
\le C2^{-J\varepsilon}2^{k\alpha}V_{k}(y)^{\frac{1}{p}-1}$$
and hence
\begin{align*}
\big|[D_k(f)(y_j)-D_k(f)(y)]\chi_{{}_{Q_j}} \big|&\le \big\|[D_k(y_j,\cdot)-D_k(y,\cdot)]\chi_{{}_{Q_j}} \big\|_{\dot{\mathcal B}^{\alpha,q}_{p,\mathcal F}}\|f\|_{({\dot{\mathcal B}^{\alpha,q}_{p,\mathcal F}})'}\\
&\leq C2^{-J\varepsilon}2^{k\alpha}V_{k}(y)^{\frac{1}{p}-1}\|f\|_{({\dot{\mathcal B}^{\alpha,q}_{p,\mathcal F}})'}.
\end{align*}
The Lebesgue dominated convergence theorem shows that
$$\lim_{J\to \infty} \int_{\Bbb R^n} \bigg\{\sum_{j=1}^{N_J}[D_k(f)(y_j)-D_k(f)(y)]\chi_{{}_{Q_j}} \bigg\}g(y)d\mu(y)=0,$$
which together with (\ref{3.21}) shows
\begin{align*}
\langle f, D_{k}(g) \rangle
&=\lim_{M\to \infty}\lim_{J\to \infty} \sum_{j=1}^{N_J}\int_{Q_j} D_k(f)(y)g(y)d\mu(y) \\
&=\int_{\Bbb R^n} D_k(f)(y)g(y)d\mu(y)  \\
&=\langle D_k(f), g \rangle.
\end{align*}

For the proof of {\bf step 3}, given $g\in\dot{\mathcal B}^{\alpha,q}_{p,\mathcal F}$, let
$$\widetilde g_{k,M}(x)=\int_{S(0,M)}D_{k}^{N}(x,y)D_{k}T_{N}^{-1}(g)(y)d\mu(y),\qquad M>0.$$
Then $\widetilde g_{k,M}\in L^{p}_{\mu}\cap\dot{\mathcal B}^{\alpha,q}_{p,\mathcal F}$.
It follows from Theorem \ref{main1} that
$$\bigg\|g-\sum_{|k|\leq M}\widetilde g_{k,M}\bigg\|_{\dot{\mathcal B}^{\alpha,q}_{p,\mathcal F}}
=\bigg\|g-\sum_{|k|\leq M}D_{k}^{N}D_{k}T_{N}^{-1}(g)\chi_{S(0,M)}\bigg\|_{\dot{\mathcal B}^{\alpha,q}_{p,\mathcal F}}
\to 0\qquad\text{as}\ M\to\infty.$$
Hence, claim \eqref{3.51} is proved, and the proof of Theorem \ref{main2} is completed.
\end{proof}

\section{Besov spaces associated with sections}\label{besov}

In this section, we study the basic properties of Besov spaces. We first apply the Calder\'on-type reproducing formula for $L^{2}_{\mu}$ to prove that the definition of $\dot{\mathcal B}^{\alpha,q}_{p,\mathcal F}$ is independent of the choice
of approximations to the identity.

\begin{pro}\label{prop 3.2}
Let $|\alpha|<\frac{\varepsilon}{4}$ and $1\leq p,q\leq\infty$.
Suppose that $\{S_k\}_{k\in \Bbb Z}$ and $\{P_k\}_{k\in \Bbb Z}$ are approximations to the identity associated with $\mathcal F$.
Set $D_k=S_k-S_{k-1}$ and $E_k=P_k-P_{k-1}$. Then for $f\in L^2_\mu$,
$$\bigg\{\sum_{k\in \Bbb Z} \Big(2^{k\alpha}\|D_k(f)\|_{L^p_\mu}\Big)^q\bigg\}^{1/q} \approx \bigg\{\sum_{k\in \Bbb Z} \Big(2^{k\alpha}\|E_k(f)\|_{L^p_\mu}\Big)^q\bigg\}^{1/q}.$$
\end{pro}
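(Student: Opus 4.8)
The plan is to exploit the Calder\'on-type reproducing formula \eqref{cal4} associated with the second approximation to the identity $\{P_k\}$. Since the almost-orthogonality estimates of Lemmas \ref{lem 2.2} and \ref{lem 2.3} use only the defining properties of an approximation to the identity associated with $\mathcal F$, which both $\{S_k\}$ and $\{P_k\}$ satisfy, the very same proofs yield the \emph{mixed} estimates
$$|D_jE_k(x,y)|\le C2^{-|j-k|\varepsilon}\frac1{V_{j\wedge k}(x)+V_{j\wedge k}(y)}\qquad\text{and}\qquad\|D_jE_k\|_{L^p_\mu\mapsto L^p_\mu}\le C2^{-|j-k|\varepsilon}$$
for all $1\le p\le\infty$, and symmetrically with the roles of $D$ and $E$ exchanged. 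By this symmetry it suffices to prove the $\lesssim$ direction, and we may assume the $E$-side quantity is finite. Let $T_N$, $R_N$ and $E_{k'}^N=\sum_{|j|\le N}E_{k'+j}$ be the operators built from $\{E_k\}$ as in Section \ref{approx-idenity}, with $N$ a fixed large integer. Since $f\in L^2_\mu$, formula \eqref{cal4} for $\{P_k\}$ gives $f=\sum_{k'\in\Bbb Z}T_N^{-1}E_{k'}^NE_{k'}(f)$ in $L^2_\mu$, and applying the $L^2_\mu$-bounded operator $D_k$ yields $D_k(f)=\sum_{k'\in\Bbb Z}D_kT_N^{-1}E_{k'}^NE_{k'}(f)$ in $L^2_\mu$.

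Next I would estimate $\|D_kT_N^{-1}E_{k'}^N\|_{L^p_\mu\mapsto L^p_\mu}$ by repeating the argument in the proof of Theorem \ref{main1}. Expanding $T_N^{-1}=\sum_{m\ge0}(R_N)^m$ and $R_N=\sum_{|k_i-\ell_i|>N}E_{k_i}E_{\ell_i}$, inserting the telescoping factorization of $2^{k\alpha}$, averaging the two bounds from Lemma \ref{lem 2.3} exactly as in the passage leading to \eqref{3.3}, and summing the geometric chain sums over $m$ and over the constrained indices — the $m=0$ term being $\sum_{|j|\le N}D_kE_{k'+j}$ handled by the mixed estimate above, and the $m\ge1$ terms carrying an extra factor $\lesssim N^{3/2}2^{-N(\varepsilon/2-2|\alpha|)}$ as in \eqref{3.002} — one obtains a constant $C_N$ and a sequence $\{c_\ell\}_{\ell\in\Bbb Z}$ with $\sum_\ell c_\ell\le C$ such that
$$2^{(k-k')\alpha}\big\|D_kT_N^{-1}E_{k'}^N\big\|_{L^p_\mu\mapsto L^p_\mu}\le C_N\,c_{k-k'}\qquad\text{for all}\ k,k'\in\Bbb Z,\ |\alpha|<\tfrac{\varepsilon}{4}.$$
In particular the series for $D_k(f)$ converges absolutely in $L^p_\mu$ as well, so the $L^2_\mu$ identity above is also an $L^p_\mu$ identity.

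Finally I would assemble the pieces: from the displayed bound,
$$2^{k\alpha}\|D_k(f)\|_{L^p_\mu}\le\sum_{k'\in\Bbb Z}\Big(2^{(k-k')\alpha}\big\|D_kT_N^{-1}E_{k'}^N\big\|_{L^p_\mu\mapsto L^p_\mu}\Big)\Big(2^{k'\alpha}\|E_{k'}(f)\|_{L^p_\mu}\Big)\le C_N\sum_{k'\in\Bbb Z}c_{k-k'}\Big(2^{k'\alpha}\|E_{k'}(f)\|_{L^p_\mu}\Big),$$
and taking the $\ell^q$ norm in $k$ (with the usual modification for $q=\infty$) and applying Young's inequality for convolution on $\Bbb Z$ gives
$$\bigg\{\sum_{k\in\Bbb Z}\big(2^{k\alpha}\|D_k(f)\|_{L^p_\mu}\big)^q\bigg\}^{1/q}\le C_N\Big(\sum_\ell c_\ell\Big)\bigg\{\sum_{k'\in\Bbb Z}\big(2^{k'\alpha}\|E_{k'}(f)\|_{L^p_\mu}\big)^q\bigg\}^{1/q}.$$
Exchanging the roles of $\{S_k\}$ and $\{P_k\}$ yields the reverse inequality, which proves the equivalence. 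The only real work is the operator-norm estimate in the second step; but since it is a line-by-line repetition of the almost-orthogonality bookkeeping already carried out for Theorem \ref{main1}, now applied to the mixed compositions $D_k(R_N)^mE_{k'}^N$, no genuinely new difficulty arises — the one point to verify carefully is simply that Lemmas \ref{lem 2.2} and \ref{lem 2.3} remain valid for compositions mixing two distinct approximations to the identity.
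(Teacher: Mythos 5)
Your proposal is correct and follows essentially the same route as the paper: the paper likewise expands $f=\sum_{k'}T_N^{-1}E_{k'}^NE_{k'}(f)$ in $L^2_\mu$ using the reproducing formula built from $\{P_k\}$, applies $D_k$, and invokes the same almost-orthogonality bookkeeping as in the proof of \eqref{3.002} (which implicitly uses the mixed estimates $\|D_jE_k\|_{L^p_\mu\mapsto L^p_\mu}\lesssim 2^{-|j-k|\varepsilon}$ that you verify explicitly), then concludes by symmetry. Your write-up merely makes explicit the convolution-type bound $2^{(k-k')\alpha}\|D_kT_N^{-1}E_{k'}^N\|_{L^p_\mu\mapsto L^p_\mu}\le C_Nc_{k-k'}$ and the Young/H\"older step that the paper leaves implicit.
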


\begin{proof}
For $f\in L^2_\mu$, we have $f=\sum_{k'\in \Bbb Z} T_N^{-1}E_{k'}^NE_{k'}(f)$ in $L^2_\mu$. Hence
$$D_k(f)
=\sum_{k'\in \Bbb Z} D_kT_N^{-1}E_{k'}^NE_{k'}(f)
=\sum_{k'\in \Bbb Z}\sum_{m=0}^\infty   D_k(R_N)^m E_{k'}^NE_{k'}(f).
$$
Applying the same argument as the proof of \eqref{3.002}, we obtain
$$\bigg\{\sum_{k\in \Bbb Z} \Big(2^{k\alpha}\|D_k(f)\|_{L^p_\mu}\Big)^q\bigg\}^{1/q}
\le C\bigg\{\sum_{k'\in \Bbb Z} \Big(2^{k'\alpha}\|E_{k'}(f)\|_{L^p_\mu}\Big)^q\bigg\}^{1/q}$$
and hence the proof follows.
\end{proof}

It is well known that the space of Schwartz functions is dense in the classical Besov space on $\Bbb R^n$. The following result is one of the main results in this section, which shows that the test function space
$\dot{\mathcal B}^{\alpha,q}_{p,\mathcal F}$ is dense in $\dot B^{\alpha,q}_{p,\mathcal F}$ as well.

\begin{thm}\label{thm 3.4}
Let $|\alpha|<\varepsilon/4$ and $1\le p,q\le \infty$. Then
$$\overline{\dot{\mathcal B}^{\alpha,q}_{p,\mathcal F}}=\dot B^{\alpha,q}_{p,\mathcal F},$$
where $\overline{\dot{\mathcal B}^{\alpha,q}_{p,\mathcal F}}$ denotes the closure of $\dot{\mathcal B}^{\alpha,q}_{p,\mathcal F}$
 with respect to $\|\cdot\|_{\dot {B}^{\alpha,q}_{p,\mathcal F}}.$
\end{thm}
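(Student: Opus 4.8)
The plan is to establish the two inclusions $\overline{\dot{\mathcal B}^{\alpha,q}_{p,\mathcal F}}\subset\dot B^{\alpha,q}_{p,\mathcal F}$ and $\dot B^{\alpha,q}_{p,\mathcal F}\subset\overline{\dot{\mathcal B}^{\alpha,q}_{p,\mathcal F}}$ separately, using the Calder\'on-type reproducing formula \eqref{cal4} on $L^2_\mu$ and its distributional counterpart \eqref{1.13} as the main engines. The first inclusion is the easier one: a Cauchy sequence $\{f_m\}\subset\dot{\mathcal B}^{\alpha,q}_{p,\mathcal F}$ with respect to $\|\cdot\|_{\dot B^{\alpha,q}_{p,\mathcal F}}$ has, for each fixed $k$, the property that $\{D_k(f_m)\}$ is Cauchy in $L^p_\mu$; one checks that the limit $f$ (defined via its action on $\dot{\mathcal B}^{-\alpha,q'}_{p',\mathcal F}$ through the reproducing formula, which converges by the boundedness of $T_N^{-1}$ on the test function space, Proposition \ref{prop 3.2} and \eqref{3.1}) lies in $\big(\dot{\mathcal B}^{-\alpha,q'}_{p',\mathcal F}\big)'$ and satisfies $D_k(f)=\lim_m D_k(f_m)$ in $L^p_\mu$, so $\|f\|_{\dot B^{\alpha,q}_{p,\mathcal F}}<\infty$ and $f_m\to f$ in that norm. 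Here one uses that the pairing $\langle D_k(f),g\rangle$ is well defined for $g\in\dot{\mathcal B}^{-\alpha,q'}_{p',\mathcal F}$ because $D_k(x,\cdot)$ itself belongs to every test space, together with the self-adjointness of $D_k$ from Lemma \ref{id}(i).

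For the reverse inclusion, the strategy mirrors \textbf{Step 3} in the proof of Theorem \ref{main2}. Given $f\in\dot B^{\alpha,q}_{p,\mathcal F}$, apply the distributional reproducing formula \eqref{1.13} to write $f=\sum_{k\in\mathbb Z}D_k^N D_k T_N^{-1}(f)$ in $\big(\dot{\mathcal B}^{-\alpha,q'}_{p',\mathcal F}\big)'$, and for $M>0$ set
$$f_M(x)=\sum_{|k|\le M}\int_{S(0,M)}D_k^N(x,y)\,D_kT_N^{-1}(f)(y)\,d\mu(y).$$
Each $f_M$ belongs to $L^2_\mu$ — because $D_kT_N^{-1}(f)$ restricted to the section $S(0,M)$ is in $L^2_\mu$ by Lemma \ref{id}(ii) applied to $D_k(y,\cdot)$ together with $f\in\big(\dot{\mathcal B}^{-\alpha,q'}_{p',\mathcal F}\big)'$ and the doubling property, and $D_k^N$ maps $L^2_\mu$ to $L^2_\mu$ — hence $f_M\in\dot{\mathcal B}^{\alpha,q}_{p,\mathcal F}$. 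One must then show $\|f-f_M\|_{\dot B^{\alpha,q}_{p,\mathcal F}}\to0$. This splits, as in Step 3 and in the proof of \eqref{3.002}, into: (1) the tail $\sum_{|k|>M}D_k^N D_k T_N^{-1}(f)$, controlled by the almost-orthogonality estimate of Lemma \ref{lem 2.3} reorganized exactly as in \eqref{3.3}, which gives a bound by $C\big\{\sum_{|k|>M}(2^{k\alpha}\|D_kT_N^{-1}(f)\|_{L^p_\mu})^q\big\}^{1/q}$, tending to $0$ since $T_N^{-1}(f)\in\dot B^{\alpha,q}_{p,\mathcal F}$; and (2) the truncation error from replacing $\mathbb R^n$ by $S(0,M)$, where one writes $D_kT_N^{-1}(f)=D_kT_N^{-1}(f)\chi_{S(0,M)}+D_kT_N^{-1}(f)\chi_{\mathbb R^n\setminus S(0,M)}$ and applies Step 1 of the proof of Theorem \ref{main2} (boundedness of each $D_\ell$, and more precisely of the reproducing sum, on $\dot{\mathcal B}^{\alpha,q}_{p,\mathcal F}$) to the second piece, whose $\dot B^{\alpha,q}_{p,\mathcal F}$-norm vanishes as $M\to\infty$ by dominated convergence on the $\ell^q(L^p_\mu)$ side.

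The main obstacle I expect is the bookkeeping in step (2): one needs the analogue of Lemma \ref{id}-type estimates for the composite kernel $D_\ell D_k^N(\cdot,\cdot)$ acting on a function supported outside a large section, keeping careful track that the relevant quantity $\big\{\sum_\ell(2^{\ell\alpha}\|D_\ell(D_k^N D_k T_N^{-1}(f)\chi_{\mathbb R^n\setminus S(0,M)})\|_{L^p_\mu})^q\big\}^{1/q}$ is dominated by a fixed summable majorant independent of $M$ and converges pointwise in $\ell$ to $0$, so that dominated convergence applies — this is where the restriction $|\alpha|<\varepsilon/4$ is used, through the geometric series $\sum_\ell 2^{\ell\alpha q-|\ell-k|\varepsilon q}$ converging, exactly as in \eqref{3.20}. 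Everything else is a faithful repetition of arguments already carried out for Theorems \ref{main1} and \ref{main2}, so I would state those reductions and refer back rather than redo the almost-orthogonality sums in full.
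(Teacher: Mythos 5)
Your proposal is correct and follows essentially the same route as the paper: the inclusion $\overline{\dot{\mathcal B}^{\alpha,q}_{p,\mathcal F}}\subset\dot B^{\alpha,q}_{p,\mathcal F}$ via the pairing estimate \eqref{4.1} (Cauchy in $\|\cdot\|_{\dot B^{\alpha,q}_{p,\mathcal F}}$ implies Cauchy in $\big(\dot{\mathcal B}^{-\alpha,q'}_{p',\mathcal F}\big)'$, then Lemma \ref{lem 3.3} with Fatou/dominated convergence), and the reverse inclusion by truncating the reproducing formula both in $k$ and over the section $S(0,M)$, with Lemma \ref{lem 2.3}, \eqref{3.1} and \eqref{3.108} controlling the two error terms, exactly as the paper's $f_{k,M}$. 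The only point stated too quickly is that $f_M\in\dot{\mathcal B}^{\alpha,q}_{p,\mathcal F}$ requires finiteness of the test norm and not merely $f_M\in L^2_\mu$; this is filled either as in the paper's Remark \ref{rem 3.1} (compact support and cancellation inherited from the kernel $D_k$) or a posteriori from your own estimate that $\|f-f_M\|_{\dot B^{\alpha,q}_{p,\mathcal F}}$ is finite, via the triangle inequality.
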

To show the above theorem, we need the following lemma.

\begin{lem}\label{lem 3.3}
Let $\{S_k\}_{k\in \Bbb Z}$ be an approximation to the identity associated with $\mathcal F$
and $D_k=S_k-S_{k-1}$ for $k\in \Bbb Z$. For $|\alpha|<\varepsilon/4$ and $1\le p,q\le \infty$,
both $D_k(\cdot,y)$ and $D_k(x,\cdot)$ are in $\dot{\mathcal B}^{\alpha,q}_{p,\mathcal F}$ for all $x,y\in \Bbb R^n$ and $k\in \Bbb Z$.
\end{lem}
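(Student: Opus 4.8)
The plan is to reduce the statement to a size estimate for the composed kernels $D_\ell D_k$ that is already available from Lemma \ref{lem 2.2}, using the self-adjointness of $D_k$ (Lemma \ref{id}(i)). First I would check membership in $L^2_\mu$, which is part of the definition of $\dot{\mathcal B}^{\alpha,q}_{p,\mathcal F}$: by the support and size bounds of Lemma \ref{id}(ii), $D_k(x,\cdot)$ is supported in a section $S(x,C2^{-k})$ and is bounded by $C/V_k(x)$, so $\|D_k(x,\cdot)\|_{L^2_\mu}^2\lesssim V_k(x)^{-1}\mu(S(x,C2^{-k}))\lesssim V_k(x)^{-1}<\infty$; by symmetry of the kernel the same bound holds for $D_k(\cdot,y)$. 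It then remains to bound the Besov quasi-norm.

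Next, since each $S_\ell$, hence $D_\ell$, is self-adjoint, applying the operator $D_\ell$ to the function $y\mapsto D_k(x,y)=D_k(y,x)$ produces precisely the function $z\mapsto (D_\ell D_k)(z,x)$, so that $\|D_\ell(D_k(x,\cdot))\|_{L^p_\mu}=\|(D_\ell D_k)(\cdot,x)\|_{L^p_\mu}$. I would then combine three ingredients: Lemma \ref{lem 2.2}, which gives $|(D_\ell D_k)(z,x)|\le C2^{-|\ell-k|\varepsilon}\big(V_{\ell\wedge k}(z)+V_{\ell\wedge k}(x)\big)^{-1}$; the support bookkeeping (via the quasi-triangle inequality) that $(D_\ell D_k)(\cdot,x)$ is supported in a section $S(x,C2^{-(\ell\wedge k)})$; and the doubling property together with the comparability $V_{\ell\wedge k}(z)\approx V_{\ell\wedge k}(x)$ valid on that section. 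These yield
$$\|(D_\ell D_k)(\cdot,x)\|_{L^p_\mu}\le C\,2^{-|\ell-k|\varepsilon}\,V_{\ell\wedge k}(x)^{1/p-1},$$
with the usual interpretation for $p=\infty$.

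Finally I would sum over $\ell$, splitting the series at $\ell=k$. For $\ell\ge k$ one has $\ell\wedge k=k$, so $V_{\ell\wedge k}(x)=V_k(x)$ is fixed and the tail is dominated by $2^{k\alpha q}V_k(x)^{q(1/p-1)}\sum_{\ell\ge k}2^{(\ell-k)(\alpha-\varepsilon)q}$, convergent since $\alpha<\varepsilon$. For $\ell<k$ one has $\ell\wedge k=\ell$ and $V_\ell(x)\ge V_k(x)>0$, so $V_\ell(x)^{1/p-1}\le V_k(x)^{1/p-1}$ because the exponent $1/p-1$ is $\le 0$ for $p\ge1$; writing $\ell=k-j$ with $j\ge1$, the tail is dominated by $2^{k\alpha q}V_k(x)^{q(1/p-1)}\sum_{j\ge1}2^{-j(\alpha+\varepsilon)q}$, convergent since $\alpha>-\varepsilon$. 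Both inequalities on $\alpha$ are ensured by $|\alpha|<\varepsilon/4$ (in fact $|\alpha|<\varepsilon$ already suffices). The case $q=\infty$ goes through verbatim with the supremum replacing the $\ell^q$-sum, and the argument for $D_k(\cdot,y)$ is identical after interchanging the two variables.

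The main point to be careful about is conceptual rather than computational: as $\ell\to-\infty$ the sections $S(x,2^{-\ell})$ exhaust $\mathbb R^n$ and $V_\ell(x)\to\mu(\mathbb R^n)=\infty$, so the factor $V_{\ell\wedge k}(x)^{1/p-1}$ looks dangerous; the observation that rescues the estimate is that this power is non-positive, so the growth of $V_\ell(x)$ only helps (and is in any case controlled by the harmless constant $V_k(x)^{1/p-1}$). Everything else is routine manipulation of support sizes and doubling comparisons of section measures, already carried out in the proofs of Lemmas \ref{id} and \ref{lem 2.2}.
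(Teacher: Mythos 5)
Your proposal is correct and follows essentially the same route as the paper: reduce via the kernel symmetry to the composed kernel $(D_\ell D_k)(\cdot,x)$, invoke the almost-orthogonality bound of Lemma \ref{lem 2.2}, obtain $\|D_\ell(D_k(x,\cdot))\|_{L^p_\mu}\lesssim 2^{-|\ell-k|\varepsilon}V_k(x)^{1/p-1}$, and sum the geometric series using $|\alpha|<\varepsilon$. The only cosmetic difference is that the paper gets the $L^p$ bound by interpolating its $L^1$ and $L^\infty$ estimates, while you integrate the pointwise bound over the supporting section directly; both rest on the same ingredients.
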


\begin{proof}
Since $D_k(x,\cdot)=D_k(\cdot,x)$ for any fixed $x\in \Bbb R^n$, it suffices to verify the lemma for $D_k(x,\cdot)$.
By Lemma \ref{lem 2.2},
$$\|D_j(D_k(x,\cdot))\|_{L^1_\mu}\le C2^{-|j-k|\varepsilon}$$
and
$$\|D_j(D_k(x,\cdot))\|_{L^\infty_\mu}\le C2^{-|j-k|\varepsilon}\frac 1{V_k(x)}.$$
If $1<p<\infty$, then
$$\|D_j(D_k(x,\cdot))\|_{L^p_\mu}
\le \|D_j(D_k(x,\cdot))\|_{L^1_\mu}^{1/p}\|D_j(D_k(x,\cdot))\|_{L^\infty_\mu}^{1-1/p}
\le C2^{-|j-k|\varepsilon}{V_k(x)^{1/p-1}}.$$
Combining above three estimates yields
\begin{align*}
 \|D_k(x,\cdot)\|_{\dot{\mathcal B}^{\alpha,q}_{p,\mathcal F}}
&=\bigg\{\sum_{j\in \Bbb Z} \Big(2^{j\alpha}\|D_j(D_k(x,\cdot))\|_{L^p_\mu}\Big)^q\bigg\}^{1/q} \\
&\le C\frac 1{V_k(x)^{1-1/p}}\bigg\{\sum_{j\in \Bbb Z} 2^{j\alpha q-|j-k|\varepsilon q}\bigg\}^{1/q} \\
&\le C2^{k\alpha}\frac 1{V_k(x)^{1-1/p}},
\end{align*}
and the proof of the lemma \ref{lem 3.3} is completed.
\end{proof}

\begin{rem}\label{rem 3.1}
The same argument as the proof of Lemma \ref{lem 3.3} shows that if $f\in C^1(\Bbb R^n)$ with compact support and
$$\int_{\Bbb R^n} f(x)d\mu(x)=0,$$
then $f\in \dot{\mathcal B}^{\alpha,q}_{p,\mathcal F}$ for $|\alpha|<\varepsilon/4$ and $1\le p,q\le \infty$.
\end{rem}

We now show Theorem \ref{thm 3.4}.

\vskip 0.2cm\noindent
{\it Proof of Theorem \ref{thm 3.4}.}
To show $\overline{\dot{\mathcal B}^{\alpha,q}_{p,\mathcal F}}\subset \dot B^{\alpha,q}_{p,\mathcal F}$,
let $\{f_m\}_{m\in \Bbb N}$ be a Cauchy sequence in $\dot {\mathcal B}^{\alpha,q}_{p,\mathcal F}$ with respect to the norm $\|\cdot\|_{\dot {B}^{\alpha,q}_{p,\mathcal F}}$.
We will prove that there is an $f\in \dot{B}^{\alpha,q}_{p,\mathcal F}$ such that $f_m$ converges to $f$ in $\dot {B}^{\alpha,q}_{p,\mathcal F}$ as $m\to \infty$.

We first claim that, if $f\in \dot{\mathcal B}^{\alpha,q}_{p,\mathcal F}$,
then $f\in \big(\dot{\mathcal B}^{-\alpha,q'}_{p',\mathcal F}\big)'$ and
$\|f\|_{(\dot {\mathcal B}^{-\alpha,q'}_{p',\mathcal F})'}\le C\|f\|_{\dot{\mathcal B}^{\alpha,q}_{p,\mathcal F}}$.
Given $f\in \dot {\mathcal B}^{\alpha,q}_{p,\mathcal F}$ and
$g\in \dot {\mathcal B}^{-\alpha,q'}_{p',\mathcal F}$,
let $\{S_k\}_{k\in \Bbb Z}$ be an approximation to the identity associated to sections and set $D_k=S_k-S_{k-1}$.
By Calder\'on-type reproducing formula $f=\sum_{k\in \Bbb Z} D_k^ND_kT_N^{-1}(f)$ in $L^2_\mu$ and H\"older's inequality,
\begin{align*}
|\langle f,g \rangle|
&=\bigg|\int_{\Bbb R^n} \sum_{k\in \Bbb Z}  D_k T^{-1}_N(f) D^N_k(g) d\mu \bigg|\\
&\le \sum_{k\in \Bbb Z} \|D_k T^{-1}_N(f)\|_{L^p_\mu} \|D^N_k(g)\|_{L^{p'}_\mu} \\
&\le \bigg\{\sum_{k\in \Bbb Z} 2^{k\alpha q} \|D_k T^{-1}_N(f)\|_{L^p_\mu}^q \bigg\}^{1/q}\bigg\{\sum_{k\in \Bbb Z} 2^{-k\alpha q'}\|D^N_k(g)\|_{L^{p'}_\mu}^{q'}\bigg\}^{1/q'}.
\end{align*}
Since $D^N_k=\sum_{|j|\le N} D_{j+k}$, we have
$$\bigg\{\sum_{k\in \Bbb Z} 2^{-k\alpha q'}\|D^N_k(g)\|_{L^{p'}_\mu}^{q'}\bigg\}^{1/q'}\le CN2^{N|\alpha|}\|g\|_{\dot{\mathcal B}^{-\alpha,q'}_{p',\mathcal F}},$$
and hence
\begin{equation}\label{4.1}
|\langle f,g \rangle|
\le CN2^{N|\alpha|}C_N\|f\|_{\dot{\mathcal B}^{\alpha,q}_{p,\mathcal F}}\|g\|_{\dot{\mathcal B}^{-\alpha,q'}_{p',\mathcal F}}
\end{equation}
 due to \eqref{3.1}.
Thus, the claim follows.

Let $\{f_m\}_{m\in \Bbb N}\subset \dot {\mathcal B}^{\alpha,q}_{p,\mathcal F}$ be a Cauchy sequence with respect to the norm $\|\cdot\|_{\dot {B}^{\alpha,q}_{p,\mathcal F}}$.
The above claim implies that $\{f_m\}_{m\in \Bbb N}$ is also Cauchy with respect to the norm
$\|\cdot\|_{(\dot {\mathcal B}^{-\alpha,q'}_{p',\mathcal F})'}$ and
$\|f_m\|_{\dot {B}^{\alpha,q}_{p,\mathcal F}}\le C$ with $C$ independent of $m$.
Since ${\big(\dot {\mathcal B}^{-\alpha,q'}_{p',\mathcal F}\big)'}$ is a Banach space, there is an
$f\in {\big(\dot {\mathcal B}^{-\alpha,q'}_{p',\mathcal F}\big)'}$ such that $f_m\to f$ in ${\big(\dot {\mathcal B}^{-\alpha,q'}_{p',\mathcal F}\big)'}$ as $m\to \infty$.
It follows from Lemma \ref{lem 3.3} that
$$|D_k(f_m-f)(x)|\le \|D_k(x,\cdot)\|_{\dot {\mathcal B}^{-\alpha,q'}_{p',\mathcal F}}\|f_m-f\|_{(\dot {\mathcal B}^{-\alpha,q'}_{p',\mathcal F})'},$$
which shows
\begin{equation}\label{3.107}
\lim_{m\to \infty} D_k(f_m)(x)=D_k(f)(x).
\end{equation}
By Fatou's lemma and \eqref{3.107},
$$
\|f\|_{\dot {B}^{\alpha,q}_{p,\mathcal F}}\le \liminf_{m\to \infty}\|f_m\|_{\dot {B}^{\alpha,q}_{p,\mathcal F}}\le C,
$$
which shows $f\in \dot {B}^{\alpha,q}_{p,\mathcal F}$.
By the Lebesgue dominated convergence theorem, we obtain that $\{f_m\}$ converges to $f$ in $\dot {B}^{\alpha,q}_{p,\mathcal F}$.

To prove $\dot B^{\alpha,q}_{p,\mathcal F} \subset \overline{\dot{\mathcal B}^{\alpha,q}_{p,\mathcal F}}$,
given $f\in \dot B^{\alpha,q}_{p,\mathcal F}$, the same argument as proof of Theorem \ref{main2} shows
\begin{equation}\label{3.108}
f=\sum_{k\in \Bbb Z} D_kD^N_kT_N^{-1}(f),
\end{equation}
where the series converges in $\dot B^{\alpha,q}_{p,\mathcal F}$. Define $f_{k,M}$ by
$$f_{k,M}(x)=\int_{S(0,M)} D_k(x,y)(D^N_kT^{-1}_N)(f)(y)d\mu(y).$$
Then
$$\bigg\|f-\sum_{|k|\le M} f_{k,M}\bigg\|_{\dot B^{\alpha,q}_{p,\mathcal F}}
\le \bigg\|\sum_{|k|\le M}D_kD^N_kT_N^{-1}(f)-\sum_{|k|\le M} f_{k,M}\bigg\|_{\dot B^{\alpha,q}_{p,\mathcal F}} + \bigg\|\sum_{|k|>M} D_kD^N_kT_N^{-1}(f)\bigg\|_{\dot B^{\alpha,q}_{p,\mathcal F}}.$$
Minkowski's inequality and Lemma \ref{lem 2.3} yield
\begin{align*}
&\bigg\|\sum_{|k|\le M}D_kD^N_kT_N^{-1}(f)-\sum_{|k|\le M} f_{k,M}\bigg\|_{\dot B^{\alpha,q}_{p,\mathcal F}}\\
&\qquad \le \bigg\{\sum_{\ell\in \Bbb Z} \bigg(2^{\ell \alpha}\sum_{|k|\le M}\big\| D_\ell D_k(D^N_kT^{-1}_N(f)\chi_{{}_{\Bbb R^n\backslash S(0,M)}})\big\|_{L^p_\mu}\bigg)^q\bigg\}^{1/q}\\
&\qquad\le C\bigg\{\sum_{\ell\in \Bbb Z} \bigg(\sum_{|k|\le M}2^{(\ell-k) \alpha }2^{-|\ell-k|\varepsilon }2^{k\alpha }
                \big\|D^N_kT^{-1}_N(f)\chi_{{}_{\Bbb R^n\backslash S(0,M)}}\big\|_{L^p_\mu}\bigg)^q\bigg\}^{1/q}.
\end{align*}
By H\"older's inequality,
\begin{align*}
&\bigg\|\sum_{|k|\le M}D_kD^N_kT_N^{-1}(f)-\sum_{|k|\le M} f_{k,M}\bigg\|_{\dot B^{\alpha,q}_{p,\mathcal F}}\\
&\qquad\le C\bigg\{\sum_{\ell\in \Bbb Z} \sum_{|k|\le M} 2^{(\ell-k) \alpha}2^{-|\ell-k|\varepsilon}2^{k\alpha q}
                \|D^N_kT^{-1}_N(f)\chi_{{}_{\Bbb R^n\backslash S(0,M)}}\|_{L^p_\mu}^q\bigg\}^{1/q}.
\end{align*}
Using \eqref{3.1} and \eqref{3.108}, we obtain
$$\bigg\|f-\sum_{|k|\le M} f_{k,M}\bigg\|_{\dot B^{\alpha,q}_{p,\mathcal F}}
  \to 0\qquad\text{as}\ M\to \infty.$$
It follows from Remark \ref{rem 3.1} that $f_{k,M}$ belongs to $\dot {\mathcal B}^{\alpha,q}_{p,\mathcal F}$, so we have $\dot B^{\alpha,q}_{p,\mathcal F} \subset \overline{\dot{\mathcal B}^{\alpha,q}_{p,\mathcal F}}$ and the proof is completed.
\qed

The following duality argument of $\dot B^{\alpha,q}_{p,\mathcal F}$ is another main result in this section.
\begin{thm}\label{thm 5.1}
Let $|\alpha|<\varepsilon/4$.
\begin{enumerate}
\item[(a)]
  For $1\le p, q\le \infty$ and  each $g\in \dot B^{-\alpha,q'}_{p',\mathcal F}$, the mapping ${\mathcal L}_g: f\mapsto \int_{\Bbb R^n} f(x)g(x)d\mu(x),$
  defined initially on $\dot {\mathcal B}^{\alpha,q}_{p,\mathcal F}$, extends to a bounded linear functional on $\dot B^{\alpha,q}_{p,\mathcal F}$ and satisfies
  $\|{\mathcal L}_g\|\lesssim\|g\|_{\dot {B}^{-\alpha,q'}_{p',\mathcal F}}$.
\item[(b)] Conversely, for $1\le p, q<\infty$, every bounded linear functional $\mathcal L$ on $\dot B^{\alpha,q}_{p,\mathcal F}$ can be realized as ${\mathcal L}={\mathcal L}_g$
  with $g\in \dot B^{-\alpha,q'}_{p',\mathcal F}$ and $\|g\|_{\dot {B}^{-\alpha,q'}_{p',\mathcal F}}\lesssim\|{\mathcal L}\|$.
\end{enumerate}
\end{thm}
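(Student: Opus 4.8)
The plan is to deduce both parts from the Calderón-type reproducing formula \eqref{cal4} together with the density and boundedness results already established. For part (a), fix $g\in\dot B^{-\alpha,q'}_{p',\mathcal F}$. For $f\in\dot{\mathcal B}^{\alpha,q}_{p,\mathcal F}$, the computation in \eqref{4.1} (with the roles of $f$ and $g$ kept but now only needing $\|g\|_{\dot B^{-\alpha,q'}_{p',\mathcal F}}$ rather than the test-function norm) gives, via $f=\sum_k D_kD_k^NT_N^{-1}(f)$ in $L^2_\mu$, Hölder in both the integral and the $\ell^q$-sum, and the boundedness \eqref{3.1} of $T_N^{-1}$,
\begin{equation*}
|{\mathcal L}_g(f)|=\Big|\int_{\Bbb R^n}\sum_{k\in\Bbb Z}D_kT_N^{-1}(f)\,D_k^N(g)\,d\mu\Big|\le CN2^{N|\alpha|}C_N\|f\|_{\dot{\mathcal B}^{\alpha,q}_{p,\mathcal F}}\|g\|_{\dot B^{-\alpha,q'}_{p',\mathcal F}}.
\end{equation*}
Hence ${\mathcal L}_g$ is bounded on the dense subspace $\dot{\mathcal B}^{\alpha,q}_{p,\mathcal F}$ of $\dot B^{\alpha,q}_{p,\mathcal F}$ (density is Theorem \ref{thm 3.4}), so it extends uniquely to a bounded functional on $\dot B^{\alpha,q}_{p,\mathcal F}$ with norm controlled by $\|g\|_{\dot B^{-\alpha,q'}_{p',\mathcal F}}$. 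One should also check that distinct $g$ give distinct ${\mathcal L}_g$, i.e. that the pairing is well defined on the distribution level; this follows because $\dot B^{\alpha,q}_{p,\mathcal F}\subset(\dot{\mathcal B}^{-\alpha,q'}_{p',\mathcal F})'$ and the reproducing formula \eqref{1.13} lets one recover $g$ from the action of ${\mathcal L}_g$ on the test functions $D_k(x,\cdot)$.

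For part (b), the idea is the standard lifting of a functional through the "analysis/synthesis" maps furnished by $\{D_k\}$. Given a bounded ${\mathcal L}$ on $\dot B^{\alpha,q}_{p,\mathcal F}$, define the analysis operator $S_\mathcal D:f\mapsto\{D_k(f)\}_{k\in\Bbb Z}$, which by definition of the norm is an isometric embedding of $\dot B^{\alpha,q}_{p,\mathcal F}$ into the weighted sequence space $\ell^q_\alpha(L^p_\mu):=\{\{h_k\}:\|\{h_k\}\|:=(\sum_k 2^{k\alpha q}\|h_k\|_{L^p_\mu}^q)^{1/q}<\infty\}$ (with the sup-modification for $q=\infty$, but part (b) assumes $q<\infty$). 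Transport ${\mathcal L}$ to a bounded functional $\widetilde{\mathcal L}$ on the closed subspace $S_\mathcal D(\dot B^{\alpha,q}_{p,\mathcal F})$ of $\ell^q_\alpha(L^p_\mu)$, extend it by Hahn–Banach to all of $\ell^q_\alpha(L^p_\mu)$ without increasing the norm, and identify its dual: since $1\le p,q<\infty$, $\big(\ell^q_\alpha(L^p_\mu)\big)'=\ell^{q'}_{-\alpha}(L^{p'}_\mu)$, so $\widetilde{\mathcal L}$ is represented by a sequence $\{b_k\}\in\ell^{q'}_{-\alpha}(L^{p'}_\mu)$ with $\|\{b_k\}\|\le\|{\mathcal L}\|$. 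Then for $f\in\dot B^{\alpha,q}_{p,\mathcal F}$,
\begin{equation*}
{\mathcal L}(f)=\sum_{k\in\Bbb Z}\int_{\Bbb R^n}D_k(f)(x)\,b_k(x)\,d\mu(x)=\sum_{k\in\Bbb Z}\int_{\Bbb R^n}f\,D_k^N(\cdot)\,\ldots
\end{equation*}
— more precisely, using the self-adjointness of $D_k$, $D_k^N$, $T_N^{-1}$ and the reproducing formula \eqref{1.13}, one rewrites $\sum_k\langle D_k(f),b_k\rangle=\big\langle f,\sum_k D_kb_k\big\rangle$ after first inserting $f=\sum_k D_k^N D_kT_N^{-1}(f)$ and reorganizing, and sets $g:=\sum_{k}D_k^N D_k T_N^{-1}$-type synthesis of $\{b_k\}$, namely $g=\sum_k (T_N^{-1})^*(D_k^N)^* b_k$ interpreted in $(\dot{\mathcal B}^{\alpha,q}_{p',\mathcal F})'$. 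One then verifies $g\in\dot B^{-\alpha,q'}_{p',\mathcal F}$ with $\|g\|_{\dot B^{-\alpha,q'}_{p',\mathcal F}}\lesssim\|\{b_k\}\|_{\ell^{q'}_{-\alpha}(L^{p'}_\mu)}\lesssim\|{\mathcal L}\|$ by running the almost-orthogonality estimate of \eqref{3.002} once more (now on the $(-\alpha,q',p')$ side, which is legitimate since $|-\alpha|<\varepsilon/4$), and that ${\mathcal L}={\mathcal L}_g$ on the dense set $\dot{\mathcal B}^{\alpha,q}_{p,\mathcal F}$, hence everywhere.

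The routine parts are the Hölder/duality bookkeeping and the identification of the dual of $\ell^q_\alpha(L^p_\mu)$; these go through verbatim as in the classical and homogeneous-type cases. The main obstacle I expect is the synthesis step in (b): defining $g=\sum_k(\text{adjoint operators})b_k$ requires showing this series converges in the distribution space $(\dot{\mathcal B}^{\alpha,q}_{p',\mathcal F})'$ and that the resulting $g$ genuinely lies in $\dot B^{-\alpha,q'}_{p',\mathcal F}$ with the quantitative norm bound — this is exactly where one must re-invoke the geometric decay estimate of Lemma \ref{lem 2.3} and the telescoping computation behind \eqref{3.002}, being careful that all the cancellations ({\rm ({\bf D}3)}-type moment conditions carried by the $D_k$) are preserved under the adjoint operations and that the exponent constraint $|\alpha|<\varepsilon/4$ is used symmetrically for $\alpha$ and $-\alpha$. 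A secondary subtlety is justifying the interchange $\sum_k\langle D_k(f),b_k\rangle=\langle f,g\rangle$ when $g$ is only a distribution and $f$ only lies in the Besov space; this is handled by first checking the identity for $f$ in the test-function space $\dot{\mathcal B}^{\alpha,q}_{p,\mathcal F}$ (where all pairings are ordinary integrals and Theorem \ref{main2} applies directly) and then passing to the limit using density (Theorem \ref{thm 3.4}) together with the continuity of both sides in the $\dot B^{\alpha,q}_{p,\mathcal F}$ norm.
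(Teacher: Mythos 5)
Your proposal is correct and follows essentially the same route as the paper: part (a) is exactly the pairing estimate \eqref{4.1} combined with the density of $\dot{\mathcal B}^{\alpha,q}_{p,\mathcal F}$ in $\dot B^{\alpha,q}_{p,\mathcal F}$ (Theorem \ref{thm 3.4}), and part (b) is the analysis map $f\mapsto\{D_k(f)\}$ into $\ell^\alpha_q(L^p_\mu)$, Hahn--Banach, the duality $(\ell^\alpha_q(L^p_\mu))'=\ell^{-\alpha}_{q'}(L^{p'}_\mu)$, and a synthesis step. The only wrinkle is your synthesis formula: by the self-adjointness of $D_k$ alone one has $\sum_k\langle D_k(f),g_k\rangle=\langle f,\sum_k D_k(g_k)\rangle$, so the paper simply sets $g=\sum_k D_k(g_k)$ (no adjoints of $T_N^{-1}$ or $D_k^N$ enter), and the convergence and the bound $\|g\|_{\dot B^{-\alpha,q'}_{p',\mathcal F}}\lesssim\|\{g_k\}\|_{\ell^{-\alpha}_{q'}(L^{p'}_\mu)}$ are precisely the content of Lemma \ref{lem 5.2}, which is the almost-orthogonality computation you anticipate.
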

To show the above theorem, we need the following

\begin{lem}\label{lem 5.2}
Let $\{S_k\}_{k\in \Bbb Z}$ be an approximation to the identity associated with $\mathcal F$
and $D_k=S_k-S_{k-1}$ for $k\in \Bbb Z$. For $|\alpha|<\varepsilon/4$ and $1\le p,q\le \infty$,
if a sequence of functions $\{g_k\}_{k\in \Bbb Z}$ satisfies
$\big\|\{2^{k\alpha}\|g_k\|_{L^p_\mu}\}_{k\in \Bbb Z}\big\|_{\ell^q}<\infty,$
then $\sum_{k\in \Bbb Z} D_k(g_k)\in \dot B^{\alpha,q}_{p,\mathcal F}$ and
$\big\|\sum_{k\in \Bbb Z} D_k(g_k)\big\|_{\dot {B}^{\alpha,q}_{p,\mathcal F}}
 \lesssim \big\|\{2^{k\alpha}\|g_k\|_{L^p_\mu}\}_{k\in \Bbb Z}\big\|_{\ell^q}.$
\end{lem}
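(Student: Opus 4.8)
The plan is to first recognize $f:=\sum_{k\in\Bbb Z}D_k(g_k)$ as a bona fide element of the distribution space $\big(\dot{\mathcal B}^{-\alpha,q'}_{p',\mathcal F}\big)'$, then to estimate $D_\ell(f)$ in $L^p_\mu$ by almost orthogonality, and finally to conclude with a discrete Young inequality; I abbreviate $b_k:=2^{k\alpha}\|g_k\|_{L^p_\mu}$, so the hypothesis reads $\|\{b_k\}_{k\in\Bbb Z}\|_{\ell^q}<\infty$, and the case $q=\infty$ is handled by the evident $\sup$-modifications. For the first part, given $g\in\dot{\mathcal B}^{-\alpha,q'}_{p',\mathcal F}$, I would use that each $D_k$ is self-adjoint (Lemma \ref{id}(i)) to write, at least formally, $\langle f,g\rangle=\sum_k\langle g_k,D_k(g)\rangle$, and then apply H\"older's inequality first in the space variable and then in the index $k$:
\[
\sum_k\big|\langle g_k,D_k(g)\rangle\big|\le\sum_k\|g_k\|_{L^p_\mu}\|D_k(g)\|_{L^{p'}_\mu}\le\|\{b_k\}\|_{\ell^q}\Big\{\sum_k\big(2^{-k\alpha}\|D_k(g)\|_{L^{p'}_\mu}\big)^{q'}\Big\}^{1/q'}=\|\{b_k\}\|_{\ell^q}\|g\|_{\dot{\mathcal B}^{-\alpha,q'}_{p',\mathcal F}}.
\]
This shows that $\sum_k\langle g_k,D_k(g)\rangle$ converges absolutely and defines a bounded linear functional, that the partial sums $\sum_{|k|\le M}D_k(g_k)$ pair convergently with every such $g$, and hence that $f\in\big(\dot{\mathcal B}^{-\alpha,q'}_{p',\mathcal F}\big)'$ with $\|f\|_{(\dot{\mathcal B}^{-\alpha,q'}_{p',\mathcal F})'}\lesssim\|\{b_k\}\|_{\ell^q}$. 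Since $D_\ell(x,\cdot)\in\dot{\mathcal B}^{-\alpha,q'}_{p',\mathcal F}$ by Lemma \ref{lem 3.3}, the quantity $D_\ell(f)$ is now meaningful.

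Next I would compute $D_\ell(f)$ explicitly. Testing $f$ against $D_\ell(x,\cdot)$ and using self-adjointness once more,
\[
D_\ell(f)(x)=\big\langle f,D_\ell(x,\cdot)\big\rangle=\sum_k\big\langle g_k,D_k\big(D_\ell(x,\cdot)\big)\big\rangle=\sum_k\big(D_\ell D_k\,g_k\big)(x),
\]
the last equality because the kernel of $D_kD_\ell$ is symmetric, so $D_k\big(D_\ell(x,\cdot)\big)(y)=(D_\ell D_k)(x,y)$. The interchange of $\sum_k$ with the integration against $g_k$ is legitimate by absolute convergence: Lemma \ref{lem 2.2} (together with, for $1<p<\infty$, the interpolation from the proof of Lemma \ref{lem 3.3}) gives $\|D_k(D_\ell(x,\cdot))\|_{L^{p'}_\mu}\lesssim 2^{-|\ell-k|\varepsilon}V_{\ell\wedge k}(x)^{1/p'-1}$, which is summable in $k$ for fixed $x$ and $\ell$. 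Therefore $D_\ell(f)=\sum_k D_\ell D_k(g_k)$, the series converging absolutely in $L^p_\mu$.

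It then remains to run the standard almost-orthogonality estimate. By Lemma \ref{lem 2.3},
\[
2^{\ell\alpha}\|D_\ell(f)\|_{L^p_\mu}\le\sum_k2^{\ell\alpha}\|D_\ell D_k(g_k)\|_{L^p_\mu}\le C\sum_k2^{(\ell-k)\alpha-|\ell-k|\varepsilon}b_k=C\,(a*b)_\ell,\qquad a_j:=2^{j\alpha-|j|\varepsilon},
\]
and since $|\alpha|<\varepsilon/4<\varepsilon$ we have $\|a\|_{\ell^1}=\sum_{j\in\Bbb Z}2^{j\alpha-|j|\varepsilon}<\infty$. The discrete Young inequality $\|a*b\|_{\ell^q}\le\|a\|_{\ell^1}\|b\|_{\ell^q}$, valid for all $1\le q\le\infty$, then yields
\[
\|f\|_{\dot B^{\alpha,q}_{p,\mathcal F}}=\big\|\{2^{\ell\alpha}\|D_\ell(f)\|_{L^p_\mu}\}_{\ell\in\Bbb Z}\big\|_{\ell^q}\le C\|a\|_{\ell^1}\|b\|_{\ell^q}\lesssim\big\|\{2^{k\alpha}\|g_k\|_{L^p_\mu}\}_{k\in\Bbb Z}\big\|_{\ell^q},
\]
so $\sum_k D_k(g_k)=f\in\dot B^{\alpha,q}_{p,\mathcal F}$ with the asserted bound.

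I expect the arithmetic of the last step to be entirely routine; the points that need care are the two bookkeeping issues in the first two steps — that $f$ genuinely belongs to $\big(\dot{\mathcal B}^{-\alpha,q'}_{p',\mathcal F}\big)'$ (so that membership in $\dot B^{\alpha,q}_{p,\mathcal F}$ is meaningful at all) and that $D_\ell$ may be passed inside the infinite sum $\sum_kD_k(g_k)$. Both reduce to absolute convergence, which is supplied by the almost-orthogonality estimates of Lemmas \ref{lem 2.2}--\ref{lem 2.3} and by the test-function property of $D_\ell(x,\cdot)$ in Lemma \ref{lem 3.3}; once these are secured, the proof is essentially a discrete Young's inequality with the exponentially decaying weight $2^{j\alpha-|j|\varepsilon}$.
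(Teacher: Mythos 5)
Your proof is correct and follows essentially the same route as the paper: first the duality pairing with test functions via H\"older's inequality to place $\sum_k D_k(g_k)$ in $\big(\dot{\mathcal B}^{-\alpha,q'}_{p',\mathcal F}\big)'$, then the almost-orthogonality bound $\|D_\ell D_k\|_{L^p_\mu\mapsto L^p_\mu}\lesssim 2^{-|\ell-k|\varepsilon}$ from Lemma \ref{lem 2.3} combined with a convolution-type inequality in $\ell^q$ (the paper phrases this step as H\"older's inequality, you as discrete Young, but they are the same estimate since $|\alpha|<\varepsilon$ makes the weight summable). The only difference is that you explicitly justify passing $D_\ell$ inside the infinite sum using Lemmas \ref{lem 2.2} and \ref{lem 3.3}, a point the paper leaves implicit.
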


\begin{proof}
For $m_1, m_2\in \Bbb Z$ with $m_1<m_2$, define
$g_{m_1}^{m_2}=\sum_{k=m_1}^{m_2} D_k(g_k)$.
Given $f\in \dot {\mathcal B}^{-\alpha,q'}_{p',\mathcal F}$, H\"older's inequality yields
\begin{align*}
|\langle g_{m_1}^{m_2}, f \rangle|
&\le\sum_{k=m_1}^{m_2}|\langle g_k , D_k(f) \rangle| \\
&\le \bigg\{\sum_{k=m_1}^{m_2} \Big(2^{k\alpha}\|g_k\|_{L^p_\mu}\Big)^q\bigg\}^{1/q}
      \bigg\{\sum_{k=m_1}^{m_2} \Big(2^{-k\alpha}\|D_k(f)\|_{L^{p'}_\mu}\Big)^{q'}\bigg\}^{1/q'}\\
&\le  \bigg\{\sum_{k=m_1}^{m_2} \Big(2^{k\alpha}\|g_k\|_{L^p_\mu}\Big)^q\bigg\}^{1/q}\|f\|_{\dot {\mathcal B}^{-\alpha,q'}_{p',\mathcal F}},
\end{align*}
which shows $ g_{m_1}^{m_2}\in \big(\dot {\mathcal B}^{-\alpha,q'}_{p',\mathcal F}\big)'$ and
$$\|g_{m_1}^{m_2}\|_{(\dot {\mathcal B}^{-\alpha,q'}_{p',\mathcal F})'}\le \bigg\{\sum_{k=m_1}^{m_2} \Big(2^{k\alpha}\|g_k\|_{L^p_\mu}\Big)^q\bigg\}^{1/q}.$$
If we set $g=\sum_{k\in \Bbb Z} D_k(g_k)$, then $g\in \big(\dot {\mathcal B}^{-\alpha,q'}_{p',\mathcal F}\big)'$ as well.  Using Lemma \ref{lem 2.3}
and H\"older's inequality,
we get
\begin{align*}
\sum_{j\in \Bbb Z} \Big(2^{j\alpha}\|D_j(g)\|_{L^p_\mu}\Big)^q
&\le \sum_{j\in \Bbb Z} \bigg(2^{j\alpha}\sum_{k\in \Bbb Z} \|D_jD_k(g_k)\|_{L^p_\mu} \bigg)^q \\
&\lesssim  \sum_{j\in \Bbb Z} \bigg(\sum_{k\in \Bbb Z} 2^{(j-k)\alpha-|j-k|\varepsilon}2^{k\alpha}\|g_k\|_{L^p_\mu} \bigg)^q
\lesssim  \sum_{k\in \Bbb Z} 2^{k\alpha q}\|g_k\|_{L^p_\mu}^q,
\end{align*}
which completes the proof.
\end{proof}

Now we return to proving the duality for $\dot B^{\alpha,q}_{p,\mathcal F}$.

\vskip 0.2cm\noindent
{\it Proof of Theorem \ref{thm 5.1}.}
(a) follows from \eqref{4.1} and Theorem \ref{thm 3.4}.
For (b), given a bounded linear functional ${\mathcal L}$ on $\dot B^{\alpha,q}_{p,\mathcal F}$,
by Theorem \ref{thm 3.4} again, ${\mathcal L}$ is also a bounded linear functional on $\dot {\mathcal B}^{\alpha,q}_{p,\mathcal F}$ and
$$|{\mathcal L}(f)|\le \|{\mathcal L}\|\|f\|_{\dot {\mathcal B}^{\alpha,q}_{p,\mathcal F}}\quad\text{for}\ f\in \dot {\mathcal B}^{\alpha,q}_{p,\mathcal F}.$$
Let $\{S_k\}_{k\in \Bbb Z}$ be an approximation to the identity associated with $\mathcal F$
and set $D_k=S_k-S_{k-1}$. Then, for each $f\in \dot {\mathcal B}^{\alpha,q}_{p,\mathcal F}$,
$\{D_k(f)\}_{k\in \Bbb Z}$ is in the sequence space
$$\ell^\alpha_q(L^p_\mu)=\bigg\{\{f_k\}_{k\in \Bbb Z} : \|\{f_k\}_{k\in \Bbb Z}\|_{\ell^\alpha_q(L^p_\mu)}
   :=\bigg(\sum_{k\in \Bbb Z} 2^{k\alpha q}\|f_k\|_{L^p_\mu}^q\bigg)^{1/q}<\infty\bigg\}.$$
Define ${\mathcal L_0}$ on a subset of $\ell^\alpha_q(L^p_\mu)$ by
$${\mathcal L_0}\big(\{D_k(f)\}_{k\in \Bbb Z}\big)={\mathcal L}(f)\qquad\text{for}\ f\in \dot {\mathcal B}^{\alpha,q}_{p,\mathcal F}.$$
Hence,
$$|{\mathcal L_0}\big(\{D_k(f)\}_{k\in \Bbb Z}\big)|
\le \|{\mathcal L}\|\|f\|_{\dot {\mathcal B}^{\alpha,q}_{p,\mathcal F}}
=\|{\mathcal L}\|\|\{D_k(f)\}_{k\in \Bbb Z}\|_{\ell^\alpha_q(L^p_\mu)}.$$
The Hahn-Banach theorem shows that ${\mathcal L_0}$ can be extended to a functional $\overline {\mathcal L_0}$ on $\ell^\alpha_q(L^p_\mu)$.
Since $(\ell^\alpha_q(L^p_\mu))'=\ell^{-\alpha}_{q'}(L^{p'}_\mu)$ for $1\le p, q<\infty$ (see \cite[page 178]{T1}),
there exists a unique sequence $\{g_k\}_{k\in \Bbb Z}\in \ell^{-\alpha}_{q'}(L^{p'}_\mu)$
such that
$$\overline {\mathcal L_0}(\{f_k\}_{k\in \Bbb Z})=\sum_{k\in \Bbb Z} \langle f_k, g_k \rangle \qquad\text{for all}\ \{f_k\}_{k\in \Bbb Z}\in \ell^\alpha_q(L^p_\mu)$$
and
$$\|\{g_k\}_{k\in \Bbb Z}\|_{\ell^{-\alpha}_{q'}(L^{p'}_\mu)}
\lesssim\|\overline {\mathcal L_0}\|\le \|{\mathcal L}\|.$$
For $f\in \dot {\mathcal B}^{\alpha,q}_{p,\mathcal F}$, we have
$${\mathcal L}(f)
={\mathcal L_0}(\{D_k(f)\}_{k\in \Bbb Z})=\sum_{k\in \Bbb Z} \langle D_k(f), g_k \rangle \\
=\sum_{k\in \Bbb Z} \langle f, D_k(g_k) \rangle =\bigg\langle   f, \sum_{k\in \Bbb Z}D_k(g_k)\bigg\rangle.
$$
Let $g=\sum_{k\in \Bbb Z} D_k(g_k)$.  Lemma \ref{lem 5.2} says that $g\in \dot B^{-\alpha,q'}_{p',\mathcal F}$ and
$$\|g\|_{\dot {B}^{-\alpha,q'}_{p',\mathcal F}}\lesssim\|\{g_k\}_{k\in \Bbb Z}\|_{\ell^{-\alpha}_{q'}(L^{p'}_\mu)}
\lesssim\|{\mathcal L}\|.$$
This completes the proof.
\qed

\section{The boundedness on $\dot B^{\alpha,q}_{p,\mathcal F}$}\label{t1}

To prove the boundedness of Monge-Amp\`ere singular integral operator $H$ acting on $\dot B^{\alpha,q}_{p,\mathcal F}$,
the key tool is the almost orthogonality estimate.
A weak version of an almost orthogonality estimate was obtained in \cite[Lemma 9.1]{Li}.
We now show a {\it pointwise} almost orthogonality estimate as follows.
Let $\{E_k\}_{k\in \Bbb Z}$ be an approximation
to the identity associated to sections with regularity exponent $\varepsilon$ and
$$D_k^\#:= D_{-k}= E_{-k} - E_{-k-1}.$$
Denote by $\gamma$ the number satisfying conditions ({\bf D}6) and ({\bf D}7), and by $\epsilon_1$ the constant given in condition ({\bf A}).
The kernel $K(x,y)=\sum_ik_i(x,y)$ of Monge-Amp\`ere singular integral operator $H$
satisfies conditions ({\bf D}1)$-$({\bf D}7), and write
$$H_i(f)(x):=\int_{\Bbb R^n} k_i(x,y)f(y)d\mu(y).$$

\begin{lem}\label{lem 4.2}
For $0<\varepsilon'<\min\{\varepsilon, \gamma \epsilon_1\}$,
\begin{align*}
&|D_k^\# H D_{k'}^\#(x,y)|\\
&\lesssim \frac{2^{-|k-k'|\varepsilon'}}{\mu(S(x,2^{k \vee k'}))+\mu(S(y,2^{k \vee k'}))+\mu\big(S(x,{\bar \rho}(x,y))\big)}\bigg(\frac{2^{k \vee k'}}{2^{k \vee k'}+{\bar \rho}(x,y)}\bigg)^{(\min\{\varepsilon, \gamma \epsilon_1\} - \varepsilon')/2}.
\end{align*}
\end{lem}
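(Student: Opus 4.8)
The plan is to expand $D_k^\# H D_{k'}^\# = \sum_{i\in\Bbb Z} D_k^\# H_i D_{k'}^\#$, estimate each kernel $D_k^\# H_i D_{k'}^\#(x,y)$ separately, and then sum the geometric series in $i$. Since the hypotheses (D1)--(D7) on $\{k_i\}$, and the asserted bound, are symmetric under interchanging the two variables (in particular the adjoint $H^*$ is again a Monge-Amp\`ere singular integral operator), it suffices to treat the case $k\ge k'$; the case $k<k'$ is entirely analogous with the two sides interchanged. Write $\eta=\min\{\varepsilon,\gamma\epsilon_1\}$. The essential preliminary step is to convert the affine-normalized H\"older conditions (D6), (D7) into genuine quasi-metric H\"older estimates. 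Using condition ({\bf A}) together with the engulfing property, if $T$ normalizes the section $S(v,2^i)$ and $u$ lies in a fixed dilate of $S(v,2^i)$, then $|T(u)-T(u')|\lesssim(\bar\rho(u,u')/2^i)^{\epsilon_1}$ whenever $\bar\rho(u,u')\le C2^i$; combined with (D6), (D7) and doubling this yields, on all the supports that occur,
$$|k_i(u,v)-k_i(u',v)|+|k_i(v,u)-k_i(v,u')|\lesssim\frac{1}{\mu(S(v,2^i))}\Big(\tfrac{\bar\rho(u,u')}{2^i}\wedge 1\Big)^{\gamma\epsilon_1},\qquad \mu(S(v,2^i))\approx\mu(S(u,2^i)).$$
Together with the $L^1$ bounds (D4), (D5) and the cancellation (D3), this says that $k_i$ obeys exactly the same ``bump calculus'' as a difference kernel $D_i$ from Lemmas \ref{id} and \ref{lem 2.2}, only with smoothness exponent $\gamma\epsilon_1$ in place of $\varepsilon$ and $L^1$-normalization in place of the pointwise size bound (which is all I shall need). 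Concretely: a vanishing moment of one factor, at scale $2^a$, paired against the H\"older smoothness of an adjacent factor at any scale $2^b\ge 2^a$, produces a gain $(2^a/2^b)^{(\text{H\"older exponent})}$, and supports compose through the quasi-triangle inequality.

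For fixed $i$ I then distinguish three regimes according to where $2^i$ sits relative to $2^{k'}\le 2^k$. If $2^i\ge 2^k$: I spend the moment of $D_{k'}^\#$ against the second-variable smoothness of $k_i$ and use the $L^1$ bound of $D_k^\#$, giving $|D_k^\# H_i D_{k'}^\#(x,y)|\lesssim \mu(S(y,2^i))^{-1}(2^{k'}/2^i)^{\gamma\epsilon_1}$, supported on $\bar\rho(x,y)\lesssim 2^i$. If $2^{k'}\le 2^i\le 2^k$: I form the intermediate kernel $F_i=D_k^\# H_i$, spending the first moment of $k_i$ against the smoothness of $D_k^\#$ (gain $(2^i/2^k)^\varepsilon$); then $F_i(x,\cdot)$ has vanishing moment and $\eta$-H\"older smoothness at scale $2^i$, which I pair against the moment of $D_{k'}^\#$ (gain $(2^{k'}/2^i)^\eta$), obtaining $|D_k^\# H_i D_{k'}^\#(x,y)|\lesssim \mu(S(x,2^k))^{-1}(2^i/2^k)^\varepsilon(2^{k'}/2^i)^\eta$, supported on $\bar\rho(x,y)\lesssim 2^k$. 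If $2^i\le 2^{k'}$: now $k_i$ is the finest factor, so I ``sandwich'' it by forming $G_i=H_i D_{k'}^\#$, spending the second moment of $k_i$ against the smoothness of $D_{k'}^\#$ (gain $(2^i/2^{k'})^\varepsilon$); then $G_i(\cdot,y)$ has vanishing moment and is supported at scale $2^{k'}$, so pairing it against the smoothness of $D_k^\#$ gives a further $(2^{k'}/2^k)^\varepsilon$, whence $|D_k^\# H_i D_{k'}^\#(x,y)|\lesssim \mu(S(x,2^k))^{-1}(2^i/2^{k'})^\varepsilon(2^{k'}/2^k)^\varepsilon$, supported on $\bar\rho(x,y)\lesssim 2^k$.

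Finally I sum over $i$. In the first regime the nonzero terms have $2^i\gtrsim \max\{2^k,\bar\rho(x,y)\}$, the sum is geometric (the $\mu(S(y,2^i))$ increase in $i$), and it is controlled by its leading term, which is comparable to $2^{-(k-k')\gamma\epsilon_1}\big(2^k/(2^k+\bar\rho(x,y))\big)^{\gamma\epsilon_1}\mu\big(S(x,2^k\vee\bar\rho(x,y))\big)^{-1}$; this already delivers the claimed decay in $\bar\rho(x,y)$, with an exponent $\gamma\epsilon_1\ge(\eta-\varepsilon')/2$ that I simply weaken. In the second regime $\sum_{k'\le i\le k}(2^i/2^k)^\varepsilon(2^{k'}/2^i)^\eta\lesssim (k-k'+1)\,2^{-(k-k')\eta}$ and in the third $\sum_{i\le k'}(2^i/2^{k'})^\varepsilon(2^{k'}/2^k)^\varepsilon\lesssim 2^{-(k-k')\varepsilon}$; in both cases the contribution vanishes unless $\bar\rho(x,y)\lesssim 2^k$, where all three denominator terms are comparable to $\mu(S(x,2^k))$ and the $\bar\rho$-factor is $\approx1$, and the harmless factor $(k-k'+1)$ is absorbed by lowering the exponent from $\eta$ to any $\varepsilon'<\eta$ (this trade, and a matching one in the $\bar\rho$-factor, is the reason for the precise exponents in the statement). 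Using in addition $\mu(S(x,\bar\rho(x,y)))\approx\mu(S(y,\bar\rho(x,y)))$ and $\mu(S(x,2^k))\vee\mu(S(y,2^k))\le\mu(S(x,\bar\rho(x,y)))$ when $\bar\rho(x,y)\ge 2^k$, the three contributions add up to the bound claimed. The step I expect to be the main obstacle is precisely the first ingredient---passing from (D6), (D7) to the quasi-metric H\"older estimates via condition ({\bf A}), in particular checking on each support that the needed ``$u$ in a fixed dilate of $S(v,2^i)$'' hypothesis is available---together with the bookkeeping in the last two regimes, where the auxiliary kernels $F_i$, $G_i$ are supported at one scale but smooth at a finer one, so the vanishing moments must be spent against the correct neighbouring factor in order to actually cash in the finer smoothness.
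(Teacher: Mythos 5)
Your proposal is correct and follows essentially the same route as the paper's proof: decompose $H=\sum_i H_i$, split into cases according to where the scale $2^i$ sits relative to $2^{k'}\le 2^k$ (your three regimes are the paper's six cases after the symmetry reduction), use condition ({\bf A}) to turn ({\bf D}6)--({\bf D}7) into H\"older gains of order $(\cdot/2^i)^{\gamma\epsilon_1}$, pair cancellations against smoothness of the adjacent factor, and sum the geometric series while trading part of the exponent $\min\{\varepsilon,\gamma\epsilon_1\}$ for $\varepsilon'$ and the $\bar\rho$-decay factor. The only differences are organizational (e.g.\ you sum in $i$ before passing to the three-term denominator in the coarse regime, and you use a double-difference for the auxiliary kernels $F_i$, $G_i$ where the paper writes the double integral with two differences directly), and the points you flag as delicate are exactly the ones the paper handles the same way.
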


\begin{proof}
Obviously
$$|D_k^\#H D_{k'}^\#(x,y)|\le \sum_j|D_k^\#H_j D_{k'}^\#(x,y)|.$$
To estimate $|D_k^\#H_j D_{k'}^\#(x,y)|$, we consider six cases.
As before, we write $V_k(x)=\mu(S(x,2^{-k}))$ for all $x\in \Bbb R^n$ and $k\in \Bbb Z$.

Case 1: $j\le k< k'$. In this case, we use Lemma \ref{id} and conditions ({\bf D}3), ({\bf D}4) to deduce
\begin{align*}
\bigg|\int_{\Bbb R^n}\ k_j(u,v)D_{k'}^\#(v,y)d\mu(v)\bigg|
&=\bigg|\int_{\Bbb R^n}\ k_j(u,v)[D_{k'}^\#(v,y)-D_{k'}^\#(u,y)]d\mu(v)\bigg|\\
&\lesssim  \frac{2^{-|j-k'|\varepsilon}}{V_{-k'}(y)}\int_{\Bbb R^n}\ |k_j(u,v)|d\mu(v) \\
&\le \frac{2^{-|j-k'|\varepsilon}}{V_{-k'}(y)}.
\end{align*}
Note also that the integrand $k_j(u,v)D_{k'}^\#(v,y)$ is zero when ${\bar \rho}(u,y)>9(A_0)^22^{k'}$,
where $A_0$ is the constant satisfying \eqref{eqn:quasitriangleineq}.
For ${\bar \rho}(u,y)\le 9(A_0)^22^{k'}$, the engulfing property of sections
implies $V_{-k'}(u)\approx V_{-k'}(y)$. Therefore
\begin{align*}
\bigg|\int_{\Bbb R^n}\ k_j(u,v)D_{k'}^\#(v,y)d\mu(v)\bigg|
&\lesssim  \frac{2^{-|j-k'|\varepsilon}}{V_{-k'}(y)}\chi_{S(u,9(A_0)^22^{k'})}(y)\\
&\lesssim  \frac{2^{-|j-k'|\varepsilon}}{V_{-k'}(u)+V_{-k'}(y)+\mu\big(S(y,{\bar \rho}(u,y))\big)}
              \bigg(\frac{2^{k'}}{2^{k'}+{\bar \rho}(u,y)}\bigg)^\varepsilon.
\end{align*}
Since $|D_k^\#(x,u)|\lesssim \frac 1{V_{-k}(x)+V_{-k}(u)+\mu(S(x,{\bar \rho}(x,u)))}
              \big(\frac{2^k}{2^k+{\bar \rho}(x,u)}\big)^\varepsilon$, we have
\begin{align*}
|D_k^\#H_j D_{k'}^\#(x,y)|
&\lesssim  \int_{\Bbb R^n}\frac 1{V_{-k}(x)+V_{-k}(u)+\mu\big(S(x,{\bar \rho}(x,u))\big)}
              \bigg(\frac{2^k}{2^k+{\bar \rho}(x,u)}\bigg)^\varepsilon\\
&\qquad\times{\frac {2^{-|j-k'|\varepsilon}}{V_{-k'}(u)+V_{-k'}(y)+\mu\big(S(y,{\bar \rho}(u,y))\big)}
              \bigg(\frac{2^{k'}}{2^{k'}+{\bar \rho}(u,y)}\bigg)^\varepsilon}d\mu(u)\\
&\lesssim \frac{2^{-|j-k'|\varepsilon}}{V_{-k'}(x)+V_{-k'}(y)+\mu\big(S(x,{\bar \rho}(x,y))\big)}
              \bigg(\frac{2^{k'}}{2^{k'}+{\bar \rho}(x,y)}\bigg)^\varepsilon,
\end{align*}
and hence, for $|j-k'|=|j-k|+|k-k'|$,
$$|D_k^\#H_j D_{k'}^\#(x,y)|
  \lesssim \frac{2^{-|j-k|\varepsilon}2^{-|k-k'|\varepsilon}}{V_{-k'}(x)+V_{-k'}(y)+\mu\big(S(x,{\bar \rho}(x,y))\big)}
              \bigg(\frac{2^{k'}}{2^{k'}+{\bar \rho}(x,y)}\bigg)^\varepsilon.$$
Summation over $j\in \Bbb Z$ yields the desired estimate.

Case 2: $j< k'\le k$. Note that the kernel $H_j$ is symmetric for $j< k$. The same argument as in Case 1 gets
$$\bigg|\int_{\Bbb R^n}\ D_k^\#(x,u)k_j(u,v)d\mu(u)\bigg|
\lesssim \frac  {2^{-|j-k|\varepsilon}}{V_{-k}(x)+V_{-k}(v)+\mu\big(S(x,{\bar \rho}(x,v))\big)}
              \Big(\frac{2^k}{2^k+{\bar \rho}(x,v)}\Big)^\varepsilon.$$
Therefore,
$$|D_k^\#H_j D_{k'}^\#(x,y)|
  \lesssim \frac{2^{-|j-k'|\varepsilon} 2^{-|k'-k|\varepsilon}}
  {V_{-k}(x)+V_{-k}(y)+\mu\big(S(x,{\bar \rho}(x,y))\big)}
              \Big(\frac{2^k}{2^k+{\bar \rho}(x,y)}\Big)^\varepsilon$$
and the desired estimate is obtained by taking summation over $j\in \Bbb Z$.

Case 3: $k'\le k< j$. In this case, we use the smoothness condition of $H_j$
and both the cancellation and size conditions of $D_{k'}^\#$ to deduce
\begin{align}
\bigg|\int_{\Bbb R^n}\ k_j(u,v)D_{k'}^\#(v,y)d\mu(v)\bigg|&=\bigg|\int_{S(y,A_02^{3+k'})}\ [k_j(u,v)-k_j(u,y)]D_{k'}^\#(v,y) d\mu(v)\bigg|\label{6.1}\\
&\lesssim   \int_{S(y,A_02^{3+k'})}\ \frac{1}{V_{-j}(u)}|T_j(v)-T_j(y)|^{\gamma}|D_{k'}^\#(v,y)|d\mu(v). \nonumber
\end{align}
We may assume that $S(u, 2^j)\cap S(y, 2^{k'})\not=\varnothing$; otherwise, the integrand is zero.
Hence, by property ({\bf A}) of the sections and $j>k'$,
$$T_j(S(y, 2^{k'}))\subset B\bigg(z,K_1 \Big(\frac{2^{k'}}{2^j} \Big)^{\epsilon_1} \bigg),$$
where $|z|\le K_2$ and $T_j$ is an affine transformation that normalizes $S(u,2^j)$. Therefore,
$$|T_j(v)-T_j(y)|\lesssim 2^{-|j-k'|\epsilon_1},$$
which yields
\begin{align*}
\bigg|\int_{\Bbb R^n}\ k_j(u,v)D_{k'}^\#(v,y)d\mu(v)\bigg|
&\lesssim \frac{2^{-|j-k'|\varepsilon''}}{V_{-j}(u)}\int_{\Bbb R^n}|D_{k'}^\#(v,y)|d\mu(v) \\
&\lesssim \frac{2^{-|j-k'|\varepsilon''}}{V_{-j}(u)}\chi_{S(u,9(A_0)^22^j)}(y),
\end{align*}
where $\varepsilon'' = \frac 12(\min\{\varepsilon, \gamma \epsilon_1\} + \varepsilon')$.
Let $\delta= \frac 12(\min\{\varepsilon, \gamma \epsilon_1\} - \varepsilon')$. We have
\begin{align*}
\bigg|\int_{\Bbb R^n}\ k_j(u,v)D_{k'}^\#(v,y)d\mu(v)\bigg|
&\lesssim  \frac{2^{-|j-k'|\varepsilon''}}{V_{-j}(u)}\Big(\frac{2^j}{2^j+{\bar \rho}(u,y)}\Big)^{\delta}\\
&\lesssim  \frac{2^{-|j-k'|\varepsilon'}}{V_{-k'}(u)+V_{-k'}(y)+\mu\big(S(y,{\bar \rho}(u,y))\big)} \Big(\frac{2^{k'}}{2^{k'}+{\bar \rho}(u,y)}\Big)^{\delta}.
\end{align*}
Arguing as in Case 1, we obtain
$$|D_k^\# H D_{k'}^\#(x,y)|\lesssim \frac{2^{-|k-k'|\varepsilon'}}{V_{-k}(x)+V_{-k}(y)+\mu\big(S(x,{\bar \rho}(x,y))\big)}
              \Big(\frac{2^k}{2^k+{\bar \rho}(x,y)}\Big)^\delta.$$

Case 4: $k<k'\le j$. Similar to Case 3.

Case 5: $k\le j\le k'$. Using the cancellation conditions for $D_k^\#$ and $H_j$ in the second variables, we write
\begin{align*}
&|D_k^\# H_j D_{k'}^\#(x,y)|\\
&=\bigg|\int_{S(x,9(A_0)^22^j)}\int D_k^\#(x,u)[k_j(u,v)-k_j(x,v)][D_{k'}^\#(v,y)-D_{k'}^\#(u,y)]d\mu(u)d\mu(v)\bigg|\\
&\lesssim \frac{2^{-|j-k'|\varepsilon}}{V_{-k'}(y)}\int_{S(x,9(A_0)^22^j)}
\bigg(\int |D_k^\#(x,u)||k_j(u,v)-k_j(x,v)|d\mu(u)\bigg)d\mu(v)
\end{align*}
A similar argument to the estimate for \eqref{6.1} gives
\begin{align*}
|D_k^\# H_j D_{k'}^\#(x,y)|
&\lesssim \frac{2^{-|j-k'|\varepsilon}}{V_{-k'}(y)}\int_{S(x,9(A_0)^22^j)}\frac{2^{-|j-k|\varepsilon'}}{V_{-j}(x)}d\mu(v)\\
&\lesssim \frac{2^{-|j-k'|\varepsilon}}{V_{-k'}(y)}2^{-|j-k|\varepsilon'}\\
&= \frac{2^{-|k-k'|\varepsilon'}}{V_{-k'}(y)}2^{-|j-k'|(\varepsilon-\varepsilon')}.
\end{align*}
Note that the support of $D_k^\# H_j D_{k'}^\#$ forces $\rho(x,y)\lesssim 17(A_0)^{3}2^{k'}$, which implies
$V_{-k'}(y)\approx V_{-k'}(x)$.
Thus,
$$|D_k^\# H_j D_{k'}^\#(x,y)|
  \lesssim \frac{2^{-|k-k'|\varepsilon'}}{V_{-k'}(x)}2^{-|j-k'|(\varepsilon-\varepsilon')}\chi_{S(x,17(A_0)^32^{k'})}(y).$$
Summation over $j\in \Bbb Z$ gives the desire estimate.

Case 6: $k'<j<k$. Similar to Case 5.
\end{proof}

We now are ready to demonstrate Theorem \ref{thm 4.1}.
\begin{proof}[Proof of Theorem \ref{thm 4.1}]
Since $\dot{\mathcal B}^{\alpha,q}_{p,\mathcal F}$ is dense in $\dot B^{\alpha,q}_{p,\mathcal F}$,
it suffices to show Theorem \ref{thm 4.1} for $f\in \dot{\mathcal B}^{\alpha,q}_{p,\mathcal F}$.
Given $f\in \dot{\mathcal B}^{\alpha,q}_{p,\mathcal F}$, we note that $f\in L^2(\Bbb R^n, d\mu)$ and $H$ is bounded on $L^2(\Bbb R^n, d\mu)$.
Applying the Calder\'on-type reproducing formula \eqref{cal4} yields
$$D_k(Hf)(x)
  =D_kH\bigg(\sum_{k'\in \Bbb Z} D^N_{k'}D_{k'}T_N^{-1}(f)\bigg)(x)
  =\sum_{k'\in \Bbb Z} D_kH D^N_{k'}D_{k'}T_N^{-1}(f)(x).$$
By Lemma \ref{lem 4.2} and Minkowski's inequality, we have
$$\|D_k(Hf)\|_{L^p_\mu}\lesssim \sum_{k'\in \Bbb Z} 2^{-|k-k'|\varepsilon'}\|D_{k'}T_N^{-1}(f)\|_{L^p_\mu}.$$
Hence, H\"oder's inequality gives
\begin{align*}
\|H(f)\|_{\dot {B}^{\alpha,q}_{p,\mathcal F}}
&\lesssim \bigg\{\sum_{k\in \Bbb Z} \Big(2^{k\alpha}\sum_{k'\in \Bbb Z} 2^{-|k-k'|\varepsilon'}\|D_{k'}T_N^{-1}(f)\|_{L^p_\mu}\Big)^q\bigg\}^{1/q} \\
&\lesssim  \bigg\{\sum_{k'\in \Bbb Z} \Big(2^{k'\alpha}\|D_{k'}T_N^{-1}(f)\|_{L^p_\mu}\Big)^q\bigg\}^{1/q}.
\end{align*}
By \eqref{3.1}, $\|H(f)\|_{\dot {B}^{\alpha,q}_{p,\mathcal F}}\lesssim C_N\|f\|_{\dot {B}^{\alpha,q}_{p,\mathcal F}}.$
\end{proof}


\vskip 0.5cm

\flushleft
{\small Yongshen Han\\
Department of Mathematics\\
Auburn University\\
Auburn, Alabama 36849-5310\\
U.S.A.\\
Email: hanyong@auburn.edu}
\vskip 0.75cm

\flushleft
{\small Ming-Yi Lee \& Chin-Cheng Lin\\
Department of Mathematics\\
National Central University\\
Chung-Li, Taiwan 320\\
Republic of China\\
Email: mylee@math.ncu.edu.tw; clin@math.ncu.edu.tw}


\begin{thebibliography}{99}

\bibitem{C1} L. A. Caffarelli,
\emph{Some regularity properties of solutions of Monge-Amp\`ere equation},
Comm. Pure Appl. Math. {\bf XLIV} (1991), 965--969.

\bibitem{C2} L. A. Caffarelli,
\emph{Boundary regularity of maps with convex potentials},
Comm. Pure Appl. Math. {\bf XLV} (1992), 1141--1151.

\bibitem{CG1} L. A. Caffarelli and C. E. Guti\'errez,
\emph{Real analysis related to the Monge-Amp\`ere equation},
Trans. Amer. Math. Soc. {\bf 348} (1996), 1075--1092.

\bibitem{CG2} L. A. Caffarelli and C. E. Guti\'errez,
\emph{Properties of the solutions of the linearized Monge-Amp\`ere equation},
Amer. J. Math. {\bf 119} (1997), 423--465.

\bibitem{CG3} L. A. Caffarelli and C. E. Guti\'errez,
\emph{Singular integrals related to the Monge-Amp\`ere equation},
        Wavelet Theory and Harmonic Analysis in Applied Sciences
        (Buenos Aires, 1995), 3--13,
        C. A. D'Atellis and E. M. Fernandez-Berdaguer, Eds.,
        Appl. Numer. Harmon. Anal.,
        Birkh\"auser Boston, Boston, MA, 1997.

\bibitem{Ca} A. P. Calder\'on,
\emph{Intermediate spaces and interpolation, the complex method},
Studia Math. {\bf 24} (1964), 113--190.

\bibitem{CW1} R. R. Coifman and G. Weiss,
\emph{Analyse Harmonique Non-Commutative sur Certains Espaces Homogenes},
Lecture Notes in Math. {\bf 242},
Springer-Verlag, Berlin and New York, 1971.

\bibitem{CW2} R. R. Coifman and G. Weiss,
\emph{Extensions of Hardy spaces and their use in analysis},
Bull. Amer. Math. Soc. {\bf 83} (1977), 569--645.

\bibitem{DJS} G. David, J.-L. Journ\'e, and S. Semmes,
\emph{Op\'erateurs de Calder\'on-Zygmund, fonctions para-accr\'etives et interpolation},
Rev. Mat. Iberoamericana \textbf{1} (1985), 1--56.

\bibitem{DL} Y. Ding and C.-C. Lin,
\emph{Hardy spaces associated to the sections},
T\^ohoku Math. J. {\bf 57} (2005), 147--170.

\bibitem{FJW} M. Frazier, B. Jawerth, and G. Weiss,
\emph{Littlewood-Paley Theory and the Study of Function Spaces},
CBMS Regional Conference Series {\textbf 79}, A.M.S., Providence, RI, 1991.

\bibitem{H1} Y. Han,
\emph{Calder\'on-type reproducing formula and the $Tb$ theorem},
Rev. Mat. Iberoamericana {\bf 10} (1994), 51--91.

\bibitem{H2} Y. Han,
\emph{Plancherel-P\^olya type inequality on spaces of homogeneous type and its applications},
Proc. Amer. Math. Soc. {\bf 126} (1998), 3315--3327.

\bibitem{HL}  Y. Han and C.-C. Lin,
\emph{Embedding theorem on spaces of homogeneous type},
J. Fourier Anal. Appl. {\bf 8} (2002), 291--307.

\bibitem{HMY1} Y. Han, D. Muller, and D. Yang,
\emph{Littlewood-Paley characterizations for Hardy spaces on spaces of homogeneous type},
Math. Nachr. {\bf 279} (2006), 1505--1537.

\bibitem{HMY2} Y. Han, D. Muller, and D. Yang,
\emph{A theory of Besov and Triebel-Lizorkin spaces on metric measure spaces modeled on Carnot-Caratheodory spaces},
Abstr. Appl. Anal. vol. {\bf 2008}, Art. ID 893409, 250 pp, 2008.

\bibitem{HS} Y. Han and E. Sawyer ,
\emph{Littlewood-Paley theorem on space of homogeneous type and classical function
spaces}, Mem. Amer. Math. Soc. {\bf 110} (1994), No. 530, 1--126.

\bibitem{In} A. Incognito,
\emph{Weak-type $(1,1)$ inequality for the Monge-Amp\`ere SIO's},
J. Fourier Anal. Appl. \textbf{7} (2001), 41--48.

\bibitem{Le} M.-Y. Lee,
\emph{The boundedness of Monge-Amp\`ere singular integral operators},
J. Fourier Anal. Appl. \textbf{18} (2012), 211--222.

\bibitem{Li} C.-C. Lin,
\emph{Boundedness of Monge-Amp\`ere singular integral operators acting on Hardy spaces and their duals},
Trans. Amer. Math. Soc. \textbf{368} (2016), 3075--3104.

\bibitem{MS1} R. A. Mac\'ias and C. Segovia,
\emph{Lipschitz functions on spaces of homogeneous type},
Adv. in Math. \textbf{33} (1979), 257--270.

\bibitem{MS2} R. A. Mac\'ias and C. Segovia,
\emph{A decomposition into atoms of distributions on spaces of homogeneous type},
Adv. in Math. \textbf{33} (1979), 271--309.

\bibitem{NS1} A. Nagel and E. M. Stein,
\emph{On the product theory of singular integrals},
Rev. Mat. Iberoamericana {\bf 20} (2004), 531--561.

\bibitem{NS2} A. Nagel and E. M. Stein,
\emph{The $\bar{\partial}_b$-complex on decoupled boundarise in $\mathbb{C}^n$},
Ann. of Math. (2) {\bf 164} (2006), 649--713.

\bibitem{P} J. Peetre,
\emph{New Thoughts on Besov Spaces},
Duke Univ. Math. Series, No. 1, Durham, N.C., 1976.

\bibitem{T1} H. Triebel,
\emph{Theory of Function Spaces},
Birkh\"auser Verlag, Basel, 1983.

\bibitem{T2} H. Triebel,
\emph{Theory of Function Spaces, II},
Birkh\"auser Verlag, Basel, 1992.

\end{thebibliography}
\end{document}